\documentclass[12pt]{article}
\usepackage[a4paper]{geometry}
\geometry{
  top=0in,            % <-- you want to adjust this
  inner=1in,
  outer=1in,
  bottom=3in,
  headheight=-1in,       % <-- and this
  headsep=-2in,          % <-- and this
}
\textheight = 692pt
\usepackage{amsthm}

\usepackage{amssymb}
\usepackage{amsmath}
\usepackage{eufrak}

\newtheorem{theorem}{Theorem}

\newtheorem{corollary}[theorem]{Corollary}
\newtheorem{lemma}[theorem]{Lemma}

\newtheorem{question}{Question}
\newtheorem{proposition}[theorem]{Proposition}

%page 1, line 33, it is $p>n+1$ instead of p<n.
%page 2, 4th line of Theorem 4: 'flows' instead of 'flow'.
% page 4, line 2 from the end of the page ''p-1' in the exponent instead of 'n-1'.  
%page 8: line 3 from the end of the page , it is now 'that' instead of ' l that that'.
%page 10, line 12, I added ''The proof is similar to the proof of Theorem 8 from [47]
% page 10, line 2 from the bottom of the page $+$ added in the formula. 
%page 13, lines 3-7  in the proof of Lemma 9: the expression for s is now specialised at 1, as before there was a typo and  it missed (1).
%Also in Line 9 there is S'_{A} now instead of S_{A}.
%Theorem 12: an assumption that $A$ has a nilpotency indeks $k<p-1$ is added.
%removed question 3, as found answer of it (I updated it in arXive). Also if needed I can remove Lemma 15, as a stronger result is available in the arXiv version. 

%page 19, Proof of Theorem 2 had a small gap before, the proof of this theorem is now corrected. 

%line 6 off section 8, it is 'between' instead of 'betweeen'.

\begin{document}
\def\F{{\mathbb F}}
\title{ On the passage from finite braces to pre-Lie rings}

\author{ Agata Smoktunowicz}
\date{}
\maketitle
\begin{abstract} 
  Let $p$ be a prime number. 
 We show that  there is a  one-to-one correspondence between  the set of   strongly nilpotent braces  and the   set of  nilpotent pre-Lie rings of cardinality $p^{n}$, for sufficiently large $p$. 
 Moreover, there is an injective mapping from the set of  left nilpotent pre-Lie rings into the set of left nilpotent braces of cardinality $p^{n}$ for $n+1<p$. 
 As an application, by using well known results about the correspondence between braces and Hopf-Galois extensions we use pre-Lie algebras to describe Hopf-Galois extensions.
 %We also translate some results from brace theory to pre-Lie algebras. For the passage from pre-Lie rings to braces we use the same method as described in \cite{Rump}.
 \end{abstract}
\section{Introduction} The   Lazard correspondence is one between  Lazard $p$-Lie rings and  Lazard Lie $p$-groups, for every prime number $p$. 
In \cite{Rump}, page 141, it was  suggested that this correspondence could be extended to a correspondence  between  pre-Lie algebras  and braces of cardinality $p^{n},$  and  a method for constructing a brace from a pre-Lie algebra was described  on page 135. As suggested in \cite{Rump}, the addition in a given brace and in the corresponding pre-Lie algebra is the same, which makes this correspondence very simple. 
 In this paper, we provide a  formula for  passage from a brace of cardinality $p^{n}$ to a pre-Lie ring, 
 and  show that it works for  all braces under mild assumptions.
  For adjoint groups of strongly nilpotent braces, this provides us with  a very simple formula for obtaining a   Lie ring  isomorphic to the Lie ring obtained using the Lazard's correspondence, which  extends results from \cite{Lazard}. Recall that in \cite{Rump} a formula was developed for the passage from pre-Lie rings to braces using Lazard's correspondence. Intuitively, our formula for the passage from braces to pre-Lie rings  was first obtained for braces which were constructed  from pre-Lie rings using Lazard's correspondence, and the main difficulty was to present this formula in a simpler form and then  show that this formula will also hold  in general for all  strongly nilpotent braces.

 There are many open  questions concerned with which nilpotent groups are adjoint groups of braces. Nilpotent groups which are not adjoint groups of braces appear to be rare and difficult to construct  (see \cite{Rump, DB}). As mentioned in \cite{Rump}, this is related to Milnor's conjecture.
 It has been known since the 1950s (see \cite{Newman}) that the Baker-Campbell-Hausdorff formula gives an
isomorphism between the category of nilpotent Lie rings with order $p^{n}$ and the category
of finite $p$-groups with order $p^n$ provided that $p >n+1$. This
connection has been systematically exploited in order to classify finite $p$-groups in \cite{Newman}. It was suggested in   \cite{Rump}, that a similar approach can be used to classify adjoint groups of braces. Moreover, Bachiller showed that if $n+1<p$ then the additive and the multiplicative orders of elements in the underlying brace are the same \cite{DB}.
  Since the addition in the brace and in the corresponding Lie ring is the same, it simplifies calculations. The formula for the multiplication in the corresponding Lie ring is also very simple for adjoint groups of strongly nilpotent braces (see section 3).

We also show that passage from pre-Lie rings to braces works for all left nilpotent pre-Lie rings of cardinality $p^{n}$ where $n+1<p$.  
 This gives a $1$-to-$1$ correspondence between  the set of   strongly nilpotent braces  and the   set of  nilpotent pre-Lie rings of cardinality $p^{n}$ and nilpotency index $k$, for each $n+1<p, k<p$.

The following question remains unanswered however.
\begin{question}
Is there a passage from all left nilpotent braces of cardinality $p^{n}$, with $n+1<p$,    to left nilpotent pre-Lie rings?
 \end{question} 
 Our main result is as follows.

 \begin{theorem}\label{main111} Let $k,n$ be natural numbers and let $k+1, n+2\leq p$, where $p$ is a prime number. 
 There is a one-to-one correspondence between strongly nilpotent  braces of cardinality $p^{n}$ and nilpotency index $k$ (so $A^{[k]}= 0$ and $A^{[k-1]}\neq 0$)
 and nilpotent pre-Lie rings  of cardinality $p^{n}$ and nilpotency index $k$. 
\end{theorem}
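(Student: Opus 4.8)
The plan is to realize the correspondence as a pair of mutually inverse maps given by \emph{universal formulas}. In one direction, $B$ sends a nilpotent pre-Lie ring $L$ to a brace, essentially by Rump's construction (page~135 of \cite{Rump}): on the abelian group $(L,+)$ define a circle operation $a\circ b=a+b+(\text{a series in the pre-Lie product of }a,b)$, the series coming from the Lazard/Baker--Campbell--Hausdorff machinery. If $L$ has nilpotency index $k$ with $k+1\le p$, then nilpotency truncates this series after finitely many terms, and the rational coefficients that appear have denominators divisible only by primes $\le k<p$, so $\circ$ is well defined and, as in \cite{Rump}, $(L,+,\circ)$ is a left brace. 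In the other direction, $G$ sends a strongly nilpotent brace $A$ of index $k$ with $k+1\le p$ to $G(A)=(A,+,\cdot)$, where $a\cdot b$ is the corresponding universal ``logarithm'' formula in the operation $a*b=a\circ b-a-b$; the filtration $A=A^{[1]}\supseteq A^{[2]}\supseteq\cdots$ again makes the sum finite and the denominators units modulo $p$. Both maps leave the underlying set (hence the cardinality $p^{n}$) unchanged.

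Next I would show $G\circ B$ and $B\circ G$ are the identity. The key is a single formal fact, proved once over $\mathbb{Q}$ in the completed free pre-Lie algebra on two generators (equivalently, in the rooted-tree model of Chapoton--Livernet): the ``exponential'' series defining $\circ$ from $\cdot$ and the ``logarithm'' series defining $\cdot$ from $*$ are inverse to one another. In characteristic $0$ and the pro-nilpotent setting this is the well-known invertibility of Rump's construction (one recovers the pre-Lie product by ``differentiating'' the circle operation), the author's formula being a closed form for this inverse. Because every coefficient occurring up to pre-Lie/brace degree $k$ lies in $\mathbb{Z}_{(p)}$ under the hypotheses $k+1\le p$ and $n+2\le p$ — the latter entering through Bachiller's theorem \cite{DB} that the additive and multiplicative orders of elements coincide, so that the relevant exponentials and logarithms of group elements are legitimate — this identity specializes to any pre-Lie ring, respectively brace, of cardinality $p^{n}$ and index $\le k$. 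Hence $G(B(L))=L$ for every nilpotent pre-Lie ring $L$ in range, and — \emph{once we know $G(A)$ is pre-Lie} — $B(G(A))=A$ for every strongly nilpotent brace $A$ in range. Preservation of the nilpotency index is then automatic: the universal formulas are triangular with respect to the filtrations, so the series $A^{[m]}$ and the descending pre-Lie series correspond term by term, and $B$, $G$ carry index $k$ to index $k$.

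The substantive point, and the step I expect to be the main obstacle, is to show that for an \emph{arbitrary} strongly nilpotent brace $A$ — not assumed to arise from a pre-Lie ring — the universal formula $G(A)$ genuinely satisfies the left pre-Lie identity. I would prove this by induction on the nilpotency index using the filtration $\{A^{[m]}\}$: modulo $A^{[m+1]}$ the operation $*$ is bi-additive and its deviation from being pre-Lie is supported in strictly higher filtration, so the pre-Lie identity for $G(A)$, read in each successive quotient, reduces to brace identities together with the inductive hypothesis; equivalently, one passes to the associated graded brace, where $*$ is ``pre-Lie up to symmetrization,'' and lifts. Carrying this out amounts to careful bookkeeping of the tree-indexed coefficients and of their $p$-integrality, which is exactly where $k+1\le p$ and $n+2\le p$ are needed — a combinatorial rather than conceptual difficulty, but the heart of the matter. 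Once $G(A)$ is known to be pre-Lie and nilpotent of index $k$, the brace $B(G(A))$ is defined, has the same addition as $A$, and by the formal inverse identity has the same circle operation; together with $G(B(L))=L$ and the index/cardinality bookkeeping, this yields the claimed bijection.
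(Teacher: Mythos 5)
Your overall architecture (group of flows in one direction, a universal $p$-integral ``logarithm'' formula in the other, mutual inverses checked via identities with coefficients in $\mathbb{Z}_{(p)}$, plus index bookkeeping) matches the paper's. But the step you yourself single out as the main obstacle --- that $G(A)$ is a genuine pre-Lie ring for an \emph{arbitrary} strongly nilpotent brace $A$ --- is exactly where the proposal contains no proof, and the strategy you sketch does not obviously work. Two things must be established: $(a+b)\cdot c=a\cdot c+b\cdot c$ (a brace is only left distributive, so this already fails for $*$ itself and is not addressed separately in your sketch) and the pre-Lie identity. Arguing that ``the deviation is supported in strictly higher filtration'' and inducting on the nilpotency index via $\{A^{[m]}\}$ only shows the identities hold up to an error lying deeper in the filtration; to conclude you need that error to be \emph{exactly zero}, and neither the brace axioms nor the inductive hypothesis supplies this. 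Passing to the associated graded loses precisely the information you need: there the induced operation even becomes bi-additive and associative degree by degree, so the graded picture cannot detect, let alone kill, the defect terms.

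The paper's proof of precisely this step (Proposition~\ref{12345} and Theorem~\ref{main}) rests on a specific mechanism absent from your sketch: one sets $\xi=\gamma^{p^{p-1}}$ for $\gamma$ a primitive root modulo $p^{p}$, so that $\xi$ is a $(p-1)$-st root of unity modulo $p^{n}$, and defines $a\cdot b=\sum_{i=0}^{p-2}\xi^{p-1-i}((\xi^{i}a)*b)$; the error terms produced by Lemma~\ref{fajny} are organised into vectors $V_{x,y,z}$ satisfying $V_{\xi x,\xi y,z}=MV_{x,y,z}$ for an integer upper-triangular matrix $M$ whose relevant diagonal entries are $\xi^{j}$ with $2\le j<k<p$, so $I-\xi^{-1}M$ is invertible modulo $p$ and the geometric sum $\sum_{n=0}^{p-2}(\xi^{-1}M)^{n}$ annihilates them; note that because $(\xi^{i}a)*b\neq\xi^{i}(a*b)$ in a brace, the naive character-sum cancellation you would get from homogeneity is not available, which is why this triangular linear algebra is needed and why $k\le p-1$ enters. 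Without this (or a substitute), your ``careful bookkeeping of tree-indexed coefficients'' is not bookkeeping but the whole theorem. Two smaller points: your derivation of $B(G(A))=A$ from the formal inverse identity is circular as stated, since that identity is proved in the free pre-Lie algebra and thus applies only to braces already known to be groups of flows; one still needs to recover the brace operation from $G(A)$ by triangular rewriting, as in Proposition~\ref{2} and Theorem~\ref{13}. And $n+2\le p$ is not used via Bachiller's order theorem but through $A^{n+1}=0$, $\xi^{p-1}\equiv 1 \bmod p^{n}$, and the denominators in the flow/BCH series.
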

 As mentioned by Vendramin in \cite{V}, braces and skew braces of cardinality $p^{n}$ can be viewed as building blocks of other braces. Indeed, it was shown in \cite{RN}
 that the multiplicative group of a brace and subbraces which are its Sylow's subgroups (which  have cardinality $p^{n}$) determine  immediately the whole brace.
 There is also a connection with {\em l}-groups \cite{CRZ}. 

Recall that a brace which is both left nilpotent and right nilpotent is strongly nilpotent \cite{Engel}. Right nilpotent braces have associated set-theoretic solutions of the Yang-Baxter equation of finite multipermutation level, and this class of set-theoretic solutions was investigated by several authors \cite{r, gateva, Cameron, GW}. The right nilpotency of braces and skew braces has also been investigated \cite{rump, CSV, Dora}. In the last section, we translate some results on right nilpotent braces to right nilpotent pre-Lie algebras.  At the moment, it seems that not right nilpotent braces and pre-Lie algebras  are less well understood (in the case when they are left nilpotent). It was not clear if the formula obtained in \cite{Rump} for the passage from pre-Lie rings to braces will hold for pre-Lie rings which are not right nilpotent. Our next result shows that this is indeed the case. 
\begin{theorem}\label{e} Let $p$ be a prime number and let $n+1<p$ be a natural number.
 Let $(A, +, \cdot )$ be a left nilpotent pre-Lie ring of cardinality $p^{n}$. Define the operation $\circ $ on $A$ by \[a\circ b=a+e^{L_{\Omega (a)}}(b),\]
 so $(A, \circ )$ is the group of flows of the pre-Lie algebra $A$. 
 Then $(A, +, \circ )$ is a left brace which is left nilpotent. The brace $(A, +, \circ )$ is right nilpotent if and only if   $(A, +, \cdot )$ is  a right nilpotent pre-Lie ring.
 Moreover, two distinct pre-Lie rings $(A, +, \cdot _{1})$ and $(A, +, \cdot _{2})$ give distinct braces.
\end{theorem}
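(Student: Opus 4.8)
I would verify the axioms in the order: $\circ$ is well defined, left distributivity, $(A,\circ)$ is a group, left nilpotency, the ``iff'', injectivity. Since $(A,+,\cdot)$ is left nilpotent and $|A|=p^{n}$, the chain $A\supseteq A\cdot A\supseteq A\cdot(A\cdot A)\supseteq\cdots$ of subgroups of $(A,+)$ strictly decreases, so it has length at most $n$; hence $L_{x_{1}}\cdots L_{x_{n}}=0$, in particular $L_{x}^{\,n}=0$, for all $x\in A$. Because $n+1<p$, every factorial $i!$ with $i\le n-1$ is invertible modulo $p$, so $e^{L_{\Omega(a)}}=\sum_{i=0}^{n-1}L_{\Omega(a)}^{\,i}/i!$ is a well-defined additive, unipotent (hence bijective) endomorphism of $(A,+)$; for the same reasons $\Omega$ is defined on all of $A$, being the inverse of a self-map of the form ``identity plus terms of strictly larger filtration degree'', which is triangular, hence bijective, on the finite set $A$. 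Thus $\circ$ is well defined, and since $e^{L_{\Omega(a)}}$ is additive, $a\circ(b+c)=a+e^{L_{\Omega(a)}}(b)+e^{L_{\Omega(a)}}(c)=(a\circ b)-a+(a\circ c)$, which is the left-brace distributivity law.

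That $(A,\circ)$ is a group is the heart of the matter. From $\Omega(0)=0$ we get that $0$ is a two-sided identity. Putting $T_{a}:=e^{L_{\Omega(a)}}$, one has $(a\circ b)\circ c=a+T_{a}(b)+T_{a\circ b}(c)$ and $a\circ(b\circ c)=a+T_{a}(b)+T_{a}T_{b}(c)$, so associativity is \emph{equivalent} to the operator identity
\[
e^{L_{\Omega(a\circ b)}}=e^{L_{\Omega(a)}}\,e^{L_{\Omega(b)}},
\]
which expresses exactly that $(A,\circ)$ is the group of flows of the pre-Lie algebra $A$. Over a field of characteristic $0$ this is classical (Agrachev--Gamkrelidze); to obtain it in the present setting I would expand both sides and note that the resulting equality follows from finitely many identities valid in the free pre-Lie $\mathbb{Q}$-algebra on two generators, truncated so as to be left nilpotent of class $n+1$; in that algebra left nilpotency bounds the number of pre-Lie factors in every monomial by $n$, so every rational coefficient that occurs has denominator divisible only by primes $\le n$, hence invertible modulo $p$, and the identity specializes to $A$. (One could instead grind out $T_{a\circ b}=T_{a}T_{b}$ directly from the pre-Lie identity; the universal argument avoids that bookkeeping.) Granting associativity, $(A,\circ)$ is a finite associative monoid with identity in which every left translation $b\mapsto a\circ b=a+T_{a}(b)$ is a bijection of $A$; such a monoid is a group, so $(A,+,\circ)$ is a left brace. \emph{I expect this operator identity, together with its transfer from characteristic $0$ to characteristic $p$, to be the main obstacle.}

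For left nilpotency, write $a*b:=a\circ b-a-b=(e^{L_{\Omega(a)}}-\mathrm{id})(b)=\sum_{i\ge1}\tfrac1{i!}L_{\Omega(a)}^{\,i}(b)$, and let $A^{[1]}=A$, $A^{[m+1]}=A\cdot A^{[m]}$ be the left powers of the pre-Lie ring. The chain $A^{[1]}\supseteq A^{[2]}\supseteq\cdots$ is decreasing, so each $L_{\Omega(a)}^{\,i}$ with $i\ge 1$ sends $A^{[m]}$ into $A^{[m+1]}$; hence $a*c\in A^{[m+1]}$ whenever $c\in A^{[m]}$. An induction then shows that the left lower series of the brace satisfies $A^{[i]}_{\mathrm{brace}}\subseteq A^{[i]}$, which is $0$ for $i$ large since $A$ is left nilpotent. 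So the brace is left nilpotent.

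It remains to treat the ``iff'' and injectivity. Since $\Omega(a)=a+(\text{terms in }A^{[2]})$, modulo $A^{[m+2]}$ one has $a*b\equiv a\cdot b$ for $b\in A^{[m]}$, where $a\cdot b=R_{b}(a)$; iterating the estimates of the previous paragraph with the left and right arguments of $*$ interchanged and with the filtration by left powers, one finds that the right lower series of the brace terminates if (and, reading the estimates with the finiteness of $A$, only if) the pre-Lie ring is right nilpotent. This part needs careful bookkeeping because the right powers of a pre-Lie ring are not themselves an obviously well-behaved filtration; I expect this to be the secondary obstacle. For injectivity: $\circ$ and $+$ determine $*$, and since $e^{L_{\Omega(a)}}-\mathrm{id}$ is an invertible power series in the nilpotent operator $L_{\Omega(a)}$ with all coefficients having denominators $<p$, one recovers first the map $(a,b)\mapsto\Omega(a)\cdot b$ (as $\sum_{k\ge1}\frac{(-1)^{k+1}}{k}$ times the $k$-fold left-nested $*$-product of $a$'s with $b$), then $\Omega$ itself by a fixed-point procedure that terminates because the filtration $\{A^{[m]}\}$ is finite, and hence the pre-Lie product $a\cdot b$. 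Therefore distinct pre-Lie products on $(A,+)$ yield distinct braces.
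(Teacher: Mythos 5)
Your outline gets the easy pieces right (left-brace distributivity, the containment of the brace left powers in the pre-Lie left powers, and the $\log$-series recovery of the map $(a,b)\mapsto\Omega(a)\cdot b$ from the $*$-operation, which is essentially the paper's injectivity argument in closed form), but the two points you flag as obstacles are genuine gaps, and your proposed fix for the first one rests on a false premise. You want the group law via the operator identity $e^{L_{\Omega(a\circ b)}}=e^{L_{\Omega(a)}}e^{L_{\Omega(b)}}$, proved by specializing identities from the free pre-Lie $\mathbb{Q}$-algebra ``truncated so as to be left nilpotent'', claiming that left nilpotency bounds every monomial by $n$ factors and hence all denominators by primes $\le n$. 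That is not true: left nilpotency only kills left-nested products, while right-nested monomials of unbounded length survive --- this is precisely what separates left nilpotent from strongly nilpotent, i.e.\ exactly the case this theorem is meant to cover. In particular a universal series inverse for $W$ carries coefficients in unbounded degree (Bernoulli-type denominators, which can contain $p$ itself), so the characteristic-zero specialization does not transfer as stated. The paper's device is different: the associative ring $S_{A}'$ generated by the operators $L_{a}$ on the Dorroh extension \emph{is} nilpotent, with every product of $n+1$ generators zero, even when $A$ is only left nilpotent; the classical BCH theorem for nilpotent associative rings with $R^{n+1}=0$, $n+1<p$, then gives $e^{L_{a}}e^{L_{b}}=e^{L_{BCH(a,b)}}$ via $[L_{a},L_{b}]=L_{[a,b]}$, and $\Omega$ is defined as the set-theoretic inverse of $W$ (Lemma \ref{8}) and identified with $W^{p^{n}!-1}$ (Lemma \ref{x}) when a coefficient-controlled formula is needed. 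Relatedly, your claim that $W$ is ``triangular, hence bijective'' is unjustified: if $W(a)=W(b)$ and $d=b-a\in A^{m}$, the difference contains terms such as $d\cdot a\in A^{m}\cdot A$, and the left-power filtration is not stable under right multiplication (if it were, left nilpotency would imply right nilpotency), so the obvious induction does not close; injectivity of $W$ is exactly the nontrivial content of Lemma \ref{8}.

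For the equivalence ``the brace is right nilpotent iff the pre-Lie ring is'' you offer no argument, only the remark that bookkeeping is needed; the missing idea is the paper's Theorem \ref{4} and Corollary \ref{12}: a pre-Lie ring (or brace) that is both left and right nilpotent is strongly nilpotent. This converts both implications into statements about the strong-nilpotency filtration $A^{[i]}$, which is stable under multiplication on both sides, after which the universal formulas with denominators prime to $p$ --- $a*b=a\cdot b+(\text{longer }\cdot\text{-products})$ in one direction, and $\Omega(a)=\sum_{i}\beta_{i}f_{i}$, $\Omega(a)\cdot b=\sum_{i}\gamma_{i}f_{i}'$ (with $f_{i}$ the left-nested $*$-powers) in the other --- transfer right nilpotency back and forth. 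Without that lemma your ``estimates with the arguments interchanged'' have no well-behaved filtration to run on, as you yourself note. So as written the proposal establishes neither that $(A,\circ)$ is a group nor the ``if and only if''; both require the operator-ring/BCH machinery of Lemma \ref{8} and Theorem \ref{99} and the strong-nilpotency theorem, which are the actual core of the paper's proof.
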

  This answers Question $1$ from \cite{Lazard}.
 Theorem \ref{e}  prompts the following question:
\begin{question} Let $p$ be a prime number and let  $n<p-1$ be a natural number.
 Which  Lie rings of cardinality $p^{n}$ admit left nilpotent pre-Lie algebras? 
\end{question}
 In \cite{Rio}, Jespers, Ced{\' o} and Rio asked  which groups are adjoint groups of braces \cite{Rio}.
 Notice that by using  reasoning from page 141 in \cite{Rump}  and by using assumptions from Proposition \ref{e} we see that  if a Lie algebra of cardinality $p^{n}$ with $n+1<p$ admits a left nilpotent  pre-Lie algebra then the  Lazard's group of this Lie algebra is an adjoint group of a brace. See \cite{AG, M} for details of how the group of flows is related to the Lazard group.
Note that by the adjoint group of a brace $B$ we mean the group $(B, \circ )$ which is also called the multiplicative group of brace $B$. Notice that Lie algebras admitting pre-Lie algebras have been extensively investigated, and the survey paper \cite{Burde} contains many related results. However, it is not clear if similar methods as mentioned in \cite{Burde} can be used for finite Lie rings and which  pre-Lie algebras mentioned in \cite{Bai, Burde}  are left nilpotent.

\section{Background information}

 Recall that a   {\em pre-Lie ring} $A$ is  a vector space with a binary operation $(x, y) \rightarrow  x\cdot y$ and the operation $(x,y)\rightarrow x+y$ such that $(A, +)$ is an abelian group
satisfying
\[(x\cdot y)\cdot z -x\cdot (y\cdot z) = (y\cdot x)\cdot z - y\cdot (x\cdot z)\]
and 
\[(x+y)\cdot z=x\cdot z+y\cdot z, x\cdot (y+z)=x\cdot y+x\cdot z,\]
 for every $x,y,z\in A$. We say that a pre-Lie ring $A$  is {\em  nilpotent} or {\em strongly nilpotent}   if for some $n\in \mathbb N$ all products of $n$ elements in $A$ are zero. We say that $A$ is left nilpotent if for some $n$, we have $a_{1}\cdot (a_{2}\cdot( a_{3}\cdot (\cdots  a_{n})\cdots ))=0$ for all  $a_{1}, a_{2}, \ldots , a_{n}\in A$.

 Braces were introduced by Rump in 2007 \cite{rump} to describe all involutive, non-degenerate set-theoretic solutions of the Yang-Baxter equation. Braces quickly found applications in other research areas, such as Hopf-Galois extensions, matched pairs of groups, regular subgroups of a holomorph, groups with bijective 1-cocycles, factorised groups, Near rings, Jacobson radical rings, quantum integrable systems,  Garside monoids, {\em l}-groups and lattice theory, virtual knot theory, biracks  and trusses
 \cite{ DB, TB, Catino, Cedo, cjo, primitive, okninski, Rio,  FC, LC, pent, doikou, doikou2, Etingof, p, GW, ILau, LV, kayvan, Rump, Agatka1, SVB,  V}.
Recall that a set $A$ with binary operations $+$ and $* $ is a {\em  left brace} if $(A, +)$ is an abelian group and the following version of distributivity combined with associativity holds.
  \[(a+b+a*b)* c=a* c+b* c+a* (b* c), \space  a* (b+c)=a* b+a* c,\]
for all $a, b, c\in A$,  moreover  $(A, \circ )$ is a group, where we define $a\circ b=a+b+a* b$.
In what follows we will use the definition in terms of operation `$\circ $' presented in \cite{cjo} (see \cite{rump} for the original definition): a set $A$ with binary operations of addition $+$, and multiplication $\circ $ is a brace if $(A, +)$ is an abelian group, $(A, \circ )$ is a group and for every $a,b,c\in A$
\[a\circ (b+c)+a=a\circ b+a\circ c.\]
 
In \cite{rump}, Rump introduced {\em left nilpotent}  and  {\em right nilpotent}  braces and radical chains $A^{i+1}=A*A^{i}$ and $A^{(i+1)}=A^{(i)}*A$  for a left brace $A$, where  $A=A^{1}=A^{(1)}$. Recall that a left brace $A$  is left nilpotent if  there is a number $n$ such that $A^{n}=0$, where inductively $A^{i}$ consists of sums of elements $a*b$ with
$a\in A, b\in A^{i-1}$. A left brace $A$ is right nilpotent   if  there is a number $n$ such that $A^{(n)}=0$, where $A^{(i)}$ consists of sums of elements $a*b$ with
$a\in A^{(i-1)}, b\in A$.
 Strongly nilpotent braces and the chain of ideals $A^{[i]}$ of a brace $A$ were defined in \cite{Engel}.
 Define $A^{[1]}=A$ and \[A^{[i+1]}=\sum_{j=1}^{i}A^{[j]}*A^{[i+1-j]}.\]  A left brace $A$ is {\em strongly nilpotent}  if  there is a number $n$ such that $A^{[n]}=0$, where $A^{[i]}$ consists of sums of elements $a*b$ with
$a\in A^{[j]}, b\in A^{[i-j]}$ for all $0<j<i$.   Various other radicals in braces were subsequently introduced,  see \cite{JKVV, JespersLeandro}.
   We define left nilpotent, right nilpotent and strongly nilpotent pre-Lie rings in the same way, but  using the operation $\cdot $ instead of the brace operation $*$.
 All braces considered in this paper are left braces, but from now on we well just refer to them as braces.

\section{ Passage from finite braces to Pre-Lie rings}

By a result of Rump \cite{rump}, every brace of order $p^{n}$ is left nilpotent. Assume that $B$ is a brace which is both left nilpotent and  right nilpotent, then by a result from \cite{Engel} it is strongly nilpotent. In other words,  there is $k$ such that the product of any $k$ elements, in any order,  is zero (where all products are under the operation $*$).  If $B^{[k]}=0$ and $B^{[k-1]}\neq 0$, then we will say that $B$ is  strongly nilpotent of degree $k$, we also say that $k$ is the nilpotency index of $B$. 

We recall  Lemma 15 from \cite{Engel}:

\begin{lemma}\label{fajny}
 Let $s$ be a natural number and let $(A, +, \circ)$ be a left brace such that $A^{s}=0$ for some $s$.
 Let $a, b\in A$, and as usual define $a*b=a\circ b-a-b$.
Define inductively elements $d_{i}=d_{i}(a,b), d_{i}'=d_{i}'(a, b)$  as follows:
$d_{0}=a$, $d_{0}'=b$, and for $1\leq i$ define $d_{i+1}=d_{i}+d_{i}'$ and $d_{i+1}'=d_{i}d_{i}'$.
 Then for every $c\in A$ we have
\[(a+b)*c=a*c+b*c+\sum _{i=0}^{2s} (-1)^{i+1}((d_{i}*d_{i}')*c-d_{i}*(d_{i}'*c)).\]
\end{lemma}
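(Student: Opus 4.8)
The plan is to reduce the identity to a telescoping recursion for the ``defect of additivity'' of right multiplication by a fixed element $c$. Set
\[\delta(x,y)=(x+y)*c-x*c-y*c\qquad (x,y\in A).\]
The only input from the brace structure that is needed is the axiom $(x+y+x*y)*c=x*c+y*c+x*(y*c)$. Rewriting its left side as $\bigl((x+y)+(x*y)\bigr)*c$ and applying the definition of $\delta$ to this, and then once more to $(x+y)*c$, the terms $x*c$ and $y*c$ cancel against the right side and one is left with
\[\delta(x,y)=x*(y*c)-(x*y)*c-\delta(x+y,\,x*y).\]

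Next I would feed the sequences from the statement into this recursion. With $d_{0}=a$, $d_{0}'=b$, $d_{i+1}=d_{i}+d_{i}'$ and $d_{i+1}'=d_{i}*d_{i}'$, it reads $\delta(d_{i},d_{i}')=d_{i}*(d_{i}'*c)-(d_{i}*d_{i}')*c-\delta(d_{i+1},d_{i+1}')$, so iterating it (equivalently, an induction on the number of steps) gives, for every $N$,
\[\delta(a,b)=\sum_{i=0}^{N}(-1)^{i}\bigl(d_{i}*(d_{i}'*c)-(d_{i}*d_{i}')*c\bigr)+(-1)^{N+1}\delta(d_{N+1},d_{N+1}').\]
To discard the last term I would first check that $d_{i}'\in A^{i+1}$ for every $i\ge 0$: indeed $d_{0}'=b\in A=A^{1}$, and since each $d_{i}$ lies in $A$ one has $d_{i+1}'=d_{i}*d_{i}'\in A*A^{i+1}=A^{i+2}$. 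Hence $d_{s-1}'\in A^{s}=0$, and then $d_{i}'=0$ for all $i\ge s-1$ because $d_{i+1}'=d_{i}*d_{i}'$ propagates the zero. In particular $d_{2s+1}'=0$, so $\delta(d_{2s+1},d_{2s+1}')=\delta(d_{2s+1},0)=0$. Putting $N=2s$ in the display above, using $\delta(a,b)=(a+b)*c-a*c-b*c$, and rewriting each summand via $(-1)^{i}\bigl(d_{i}*(d_{i}'*c)-(d_{i}*d_{i}')*c\bigr)=(-1)^{i+1}\bigl((d_{i}*d_{i}')*c-d_{i}*(d_{i}'*c)\bigr)$ yields exactly the asserted formula.

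I expect the one delicate step to be the derivation of the $\delta$-recursion: the brace axiom must be used to rewrite $\bigl((x+y)+x*y\bigr)*c$ in terms of quantities of lower complexity without circularly reproducing $(x+y)*c$, and it is easy to loop here. Everything afterwards is bookkeeping --- a telescoping sum plus the elementary remark that the auxiliary elements $d_{i}'$ sink into the powers $A^{i+1}$ and therefore vanish once $i\ge s-1$, so that the bound $2s$ in the statement is merely a convenient, non-sharp choice.
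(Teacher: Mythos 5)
Your argument is correct: the recursion $\delta(x,y)=x*(y*c)-(x*y)*c-\delta(x+y,x*y)$ is exactly what the brace axiom $(x+y+x*y)*c=x*c+y*c+x*(y*c)$ gives, the telescoping is sound, and the observation $d_i'\in A^{i+1}$ (so $d_i'=0$ for $i\ge s-1$, killing the remainder term and making the bound $2s$ harmlessly generous) is the right way to terminate. The paper itself does not reprove this statement but recalls it as Lemma 15 of the cited Engel-groups paper, whose proof proceeds by the same iteration of the defining brace relation, so your proof matches the intended one in substance.
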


{\bf Notation 1.} Let $A$ be a strongly nilpotent brace with operations $+, \circ , *$ defined as usual, so $x\circ y=x+y+x*y$
  for $x, y, z\in A$,  and 
let $E(x, y, z)\subseteq A$ denote the set consisting of any product of elements $x$ and $y$ and one element $z$ at the end of each product under the operation $*$,  in any order, with any distribution of brackets, each product consisting of at least 2 elements from the set $\{x,y\}$, each product having $x$ and $y$ appear at least once,  and having element $z$ at the end. Notice that $E(x,y,z )$ is  finite, provided that $A$ is a strongly nilpotent brace, as we may assume that all products of $k$ or more elements are $0$, where $k$ is the nilpotency index of $A$. Let $V_{x,y,z}$ be  a vector obtained from products of elements $x, y, z$ arranged in a such way that shorter products of elements  are situated  before longer products. We consider only products of less than $k$ elements. 

$ $

 Recall that  a number $\gamma $ is a {\em  primitive root}  modulo $m$ if every integer coprime to $m$ is congruent to a power of $\gamma $ modulo $m$.
 If $p>2$ is a prime number then it is known that there exists a primitive root modulo $p^{i}$ for every $i$. Let $k<p$ be the nilpotency index of a brace $A$, so $A^{[k]}=0$ and $A^{[k-1]}\neq 0$. Let $A$ have cardinality $p^{n}$ with $n+1<p$. 
 Let $\gamma $ be a primitive root modulo $p^{p}$ and $\phi $ be the Euler's function then $\phi (p^{p})=(p-1)p^{p-1}$. By Euler's theorem  $\gamma ^{p^{p-1}(p-1)} \equiv 1 \mod p^{p}$ and $\gamma ^{i}$ is not congruent to $1 \mod p^{p}$ for  natural $1\leq i<\phi (p^{p})=(p-1)p^{p-1}$. 

\begin{lemma}\label{3} Let $p>2$ be a prime number.
Let $\xi=\gamma ^{p^{p-1}}$ where $\gamma $ is a primitive root modulo $p^{p}$, then $\xi ^{p-1}\equiv 1 \mod p^{p}$.  Moreover,  $ \xi ^{j}$ is not congruent to $1$ modulo $p$ for natural $0<j<p-1$. 
\end{lemma}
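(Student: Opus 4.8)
The plan is to dispatch the first assertion directly from Euler's theorem and to handle the second assertion by reducing everything modulo $p$.

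First I would observe that $\xi^{p-1}=\gamma^{p^{p-1}(p-1)}=\gamma^{\phi(p^{p})}$, and since $\gamma$ is coprime to $p^{p}$, Euler's theorem gives $\gamma^{\phi(p^{p})}\equiv 1 \pmod{p^{p}}$; this is precisely what was recorded in the paragraph immediately before the lemma. Hence $\xi^{p-1}\equiv 1 \pmod{p^{p}}$, which is the first claim.

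For the second claim the essential point is that, although $\xi$ has multiplicative order exactly $p-1$ modulo $p^{p}$ (it equals $\gamma^{p^{p-1}}$ and $\gamma$ has order $\phi(p^{p})=p^{p-1}(p-1)$ modulo $p^{p}$), what is needed is the stronger statement that $\xi^{j}\not\equiv 1$ even modulo $p$. I would reduce modulo $p$ using Fermat's little theorem: $\gamma^{p}\equiv\gamma\pmod p$, and iterating gives $\gamma^{p^{m}}\equiv\gamma\pmod p$ for every $m\geq 1$; in particular $\xi=\gamma^{p^{p-1}}\equiv\gamma\pmod p$, so $\xi^{j}\equiv\gamma^{j}\pmod p$ for all $j$. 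It therefore suffices to show that $\gamma$ is a primitive root modulo $p$, i.e. that $\gamma^{j}\not\equiv 1\pmod p$ for $0<j<p-1$.

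The only step requiring an actual argument is the classical fact that a primitive root modulo $p^{p}$ is automatically a primitive root modulo $p$, which I would prove as follows. Let $d$ be the order of $\gamma$ modulo $p$, so $d\mid p-1$. Then $\gamma^{d}$ lies in the kernel of the reduction homomorphism $(\mathbb{Z}/p^{p}\mathbb{Z})^{\times}\to(\mathbb{Z}/p\mathbb{Z})^{\times}$, which is a group of order $p^{p-1}$, hence $\gamma^{dp^{p-1}}\equiv 1\pmod{p^{p}}$. Since $\gamma$ has order $p^{p-1}(p-1)$ modulo $p^{p}$, this forces $p^{p-1}(p-1)\mid dp^{p-1}$, i.e. $(p-1)\mid d$; combined with $d\mid p-1$ this yields $d=p-1$. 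Consequently $\gamma^{j}\not\equiv 1\pmod p$ for $0<j<p-1$, and since $\xi^{j}\equiv\gamma^{j}\pmod p$ the second assertion follows. There is no genuine obstacle in this lemma; the only thing one must be careful not to do is to conflate "order modulo $p^{p}$" with "order modulo $p$" when establishing the second claim, which is why the reduction step via Fermat's little theorem and the primitive-root descent is needed rather than a bare computation of the order of $\xi$ modulo $p^{p}$.
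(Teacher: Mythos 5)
Your proof is correct and follows essentially the same route as the paper: the first claim is Euler's theorem exactly as recorded before the lemma, and the second claim is handled by showing $\xi\equiv\gamma\pmod p$ (the paper does this via $p-1\mid p^{p-1}-1$, you via iterating Fermat's little theorem — the same computation in substance) and then invoking that $\gamma$ is a primitive root modulo $p$. The only difference is that you supply an explicit order/kernel argument for the descent "primitive root mod $p^{p}$ implies primitive root mod $p$," which the paper simply asserts; this is a welcome bit of extra rigor rather than a different approach.
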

\begin{proof} Notice that $\gamma $ is also a primitive root modulo $p$, since it is a primitive root modulo $p^{p}$.  Notice that $p-1$ divides $p^{p-1}-1$, therefore $\gamma  ^{p^{p-1}-1}\equiv 1 \mod p$ (since $\gamma ^{p-1}\equiv 1 \mod p$). Therefore, $\xi=\gamma ^{p^{p-1}}\equiv \gamma \mod  p$. Consequently, $e^{j}\equiv \gamma ^{j} \mod p$ for all $j$. It follows that $\xi ^{j}$ is not congruent to $1$ modulo $p$ for every $0<j<p-1$, since $\gamma ^{j}$ is not congruent to $1$ modulo $p$ for such $j$ (since $\gamma $ is a primitive root modulo $p$).
\end{proof}

Below we associate to every strongly nilpotent  brace a pre-Lie ring which is also strongly nilpotent and which has the same additive group.
 
\begin{proposition}\label{12345} Let $p$ be a prime number, and $n,k$ be natural numbers such that $n+1<p$ and $k<p$.
Let $A$ be a strongly nilpotent brace of cardinality $p^{n}$, and  let $k$ be the nilpotency index of $A$ so $A^{[k]}=0$ and $A^{[k-1]}\neq 0$. Let $\xi =\gamma  ^{p^{p-1}}$ where $\gamma $ is a primitive root modulo $p^{p}$.
 As usual, the operations on $A$ are $+$, $\circ $  and $*$ where $a*b=a\circ b-a-b$. Define the binary operation $\cdot $ on $A$ as follows 
\[a\cdot b=\sum_{i=0}^{p-2}\xi ^{p-1-i}((\xi ^{i}a)* b),\]
    for $a, b\in A$, where $\xi ^{i}a$ denotes the sum of $\xi ^{i}$ copies of element $a$.
Then, \[(a+b)\cdot c=a\cdot c+b\cdot c\] for every $a, b, c\in A$. 
Moreover \[a\cdot (b+c)=a\cdot b+a\cdot c\] for every $a,b,c\in A$.
\end{proposition}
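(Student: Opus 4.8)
The plan is to establish the two distributivity laws separately; the left one is immediate and the right one carries all the content. For $a\cdot(b+c)=a\cdot b+a\cdot c$ I would apply the brace identity $x*(y+z)=x*y+x*z$ termwise: since $(\xi^{i}a)*(b+c)=(\xi^{i}a)*b+(\xi^{i}a)*c$ for every $i$, summing the definition of $\cdot$ against the weights $\xi^{\,p-1-i}$ yields the identity, and the arithmetic of $\xi$ plays no role.

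For right distributivity the key is an expansion lemma: for all $x,y\in A$ and every integer $\lambda$,
\[(\lambda x)*y=\sum_{m=1}^{k-1}\lambda^{m}\,c_{m}(x,y),\qquad c_{m}(x,y)\in A^{[m+1]},\]
with the $c_{m}(x,y)$ independent of $\lambda$; here $\lambda x$ is the $\lambda$-fold additive sum of $x$, and the identity is read modulo the exponent of $(A,+)$, which divides $p^{n}$ and hence divides $p^{p}$. To obtain this I would set $g(\lambda)=(\lambda x)*y$ and apply Lemma \ref{fajny} with summands $\lambda x$ and $x$ (legitimate, as a strongly nilpotent brace is left nilpotent), getting $g(\lambda+1)-g(\lambda)=x*y+(\text{correction terms of the shape in Lemma \ref{fajny}})$, in which the correction terms have strictly larger brace-filtration degree. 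Expressing these terms through the products of copies of $x$ ending in $y$, ordered by increasing length as in Notation 1 (so that the resulting system is triangular), and inducting on the nilpotency index --- passing to $A/A^{[k-1]}$, lifting the coefficients, and using that the remainder lies in $A^{[k-1]}$, on which $A^{[k-1]}*A=A*A^{[k-1]}=0$ --- one shows that $g$ is a polynomial function of $\lambda$ of degree $\le k-1$ with zero constant term, the coefficient of $\lambda^{m}$ lying in $A^{[m+1]}$. The hypothesis $k-1<p$ enters precisely when the binomial coefficients produced by Newton's forward-difference formula are re-expanded as polynomials in $\lambda$ with $\mathbb{Z}_{(p)}$ coefficients. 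Running the same argument with two scalar parameters shows $(\lambda x+\mu y)*z$ is a polynomial in $(\lambda,\mu)$ with no constant term whose part of total degree one equals $\lambda\,c_{1}(x,z)+\mu\,c_{1}(y,z)$; setting $\mu=\lambda$ and comparing with the one-variable formula applied to $\lambda(x+y)$ forces $c_{1}(x+y,z)=c_{1}(x,z)+c_{1}(y,z)$, so $c_{1}$ is additive in its first argument. I expect this expansion lemma --- in particular the bookkeeping that pins down the filtration degrees of the $c_{m}$ --- to be the main obstacle.

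Granting the lemma, substitute $(\xi^{i}a)*b=\sum_{m=1}^{k-1}\xi^{im}c_{m}(a,b)$ into the definition of $\cdot$:
\[a\cdot b=\sum_{m=1}^{k-1}c_{m}(a,b)\,S_{m},\qquad S_{m}:=\sum_{i=0}^{p-2}\xi^{\,p-1-i+im}=\xi^{\,p-1}\sum_{i=0}^{p-2}\bigl(\xi^{\,m-1}\bigr)^{i}.\]
By Lemma \ref{3} we have $\xi^{p-1}\equiv1\pmod{p^{p}}$ and $\xi^{m-1}\not\equiv1\pmod p$ for $2\le m\le k-1$ (indeed $0<m-1\le k-2\le p-3<p-1$), so for such $m$ the inner sum is $(\xi^{(p-1)(m-1)}-1)/(\xi^{m-1}-1)\equiv0\pmod{p^{p}}$ and $S_{m}$ annihilates $(A,+)$, whereas $S_{1}=(p-1)\xi^{p-1}\equiv p-1$. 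Hence $a\cdot b=(p-1)\,c_{1}(a,b)$, and since $c_{1}$ is additive in its first slot,
\[(a+b)\cdot c=(p-1)\,c_{1}(a+b,c)=(p-1)\,c_{1}(a,c)+(p-1)\,c_{1}(b,c)=a\cdot c+b\cdot c,\]
which completes the argument. One can bypass the additivity of $c_{1}$ entirely: by the two-variable expansion the right-distributivity defect $(\xi^{i}a+\xi^{i}b)*c-(\xi^{i}a)*c-(\xi^{i}b)*c$ has no term of degree $\le1$ in $\lambda=\xi^{i}$, so after weighting by $\xi^{\,p-1-i}$ and summing over $i$ only the vanishing sums $S_{m}$ with $m\ge2$ appear.
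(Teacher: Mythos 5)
Your proposal is correct in substance, but it takes a genuinely different route from the paper. The paper never proves that $\lambda\mapsto(\lambda x)*y$ is a polynomial function of an arbitrary scalar $\lambda$; it only records the effect of the single substitution $x\mapsto\xi x$, $y\mapsto\xi y$. Concretely, by Lemma \ref{fajny} there is an integer upper-triangular matrix $M$, independent of the arguments, with $V_{\xi x,\xi y,z}=MV_{x,y,z}$ and with diagonal entries $\xi^{j}$, $j\ge 2$, on the coordinates indexing products containing both $x$ and $y$; the weighted sum of the defects $C(i)$ is then killed by the matrix geometric-series identity $(I-\xi^{p-2}M)\sum_{i=0}^{p-2}(\xi^{p-2}M)^{i}=I-(\xi^{p-2}M)^{p-1}$, combined with $M^{p-1}V_{x,y,z}=V_{x,y,z}$ (from $\xi^{p-1}x=x$) and the fact that $I-\xi^{p-2}M$ is upper triangular with diagonal entries coprime to $p$ while the entries of the relevant vector have $p$-power order. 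Your route makes the $\xi$-grading explicit through the expansion lemma and then reduces everything to the scalar sums $S_{m}$, which is conceptually cleaner (it exhibits $a\cdot b$ as $(p-1)c_{1}(a,b)$, the ``linear part'' of $a\mapsto a*b$), but it needs the stronger universal statement about all integer multiples $\lambda x$, which the paper's matrix argument deliberately avoids; that is the trade-off between the two proofs.

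Two points in your sketch need tightening, both in the part you yourself flag as the main obstacle. First, the induction you describe for the expansion lemma (passing to $A/A^{[k-1]}$ and ``lifting the coefficients'') does not by itself show that the error term, though it lies in $A^{[k-1]}$, is a polynomial in $\lambda$; annihilation of $A^{[k-1]}$ gives no $\lambda$-dependence information. A clean fix is to show that the $m$-th forward difference of $g(\lambda)=(\lambda x)*y$ takes values in $A^{[m+1]}$ (each application of Lemma \ref{fajny} either replaces an occurrence of $\lambda x$ by $x$ or produces strictly longer products), hence vanishes identically for $m\ge k$, and then to apply Newton's formula, converting $\binom{\lambda}{m}$ into monomials using that $m!$ is a unit modulo $p^{n}$ for $m\le k-1<p$. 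Second, when you ``compare'' the two-variable expansion at $\mu=\lambda$ with the one-variable expansion of $\lambda(x+y)$ to force additivity of $c_{1}$, you are implicitly using uniqueness of coefficients for polynomial maps of degree $<p$ into a finite abelian $p$-group; this is true (again by finite differences and invertibility of $m!$), but it should be said. Your bypass, which only needs that the distributivity defect admits an expansion with no monomials of total degree $\le 1$ (its degree-$0$ and axis terms vanish because $0*c=0$), avoids this issue and is the preferable way to finish.
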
 
\begin{proof} 
By the definition of a left brace, we immediately get that  $a\cdot ( b+ c)=a\cdot b+ a\cdot c$. We will show that 
$(a+b)\cdot c=a\cdot c+b\cdot c$ for $a, b, c\in A$. 
 Observe that    \[(a+b)\cdot c= \sum_{i=0}^{p-2}\xi ^{p-1-i}((\xi ^{i}a+\xi ^{i}b)* c).\]
  Lemma \ref{fajny} applied several times yields 
\[\xi ^{p-1-n}(\xi ^{n} a+{\xi ^{n}}b)* c=
 \xi ^{p-1-n}((\xi ^{n} a)*c)+\xi ^{p-1-n}(({\xi ^{n}}b)*c)+ \xi ^{p-1-n}C(n),\] 
 where $C(n)$ is a sum of some products (under operation $*$) of elements $\xi ^{n}a$ and $\xi ^{n}b$ and an element $c$ at the end, because $A$ is a strongly nilpotent brace. Moreover, each product has at last one occurrence of element  ${\xi ^{n}}a$ and also at last one occurrence of element  ${\xi ^{n}}b$ and an element $c$ at the end.

 To show that  $(a+b)\cdot c =a\cdot c+b\cdot c$  it suffices to prove that \[\sum_{i=0}^{p-2}\xi ^{p-1-n}C(n)=0.\]
  We may consider a vector $V_{\xi ^{i}a, \xi ^{i}b, c}$ obtained as in Notation $1$ from products of elements $\xi ^{i}a$, $\xi ^{i}b$, $c$. 

By Lemma \ref{fajny} (applied several times) every element from the set $E(\xi x, \xi y, z)$ can be written as a linear combination 
   of elements from $E(x, y, z)$, with  integer coefficients which do not  depend on $x, y$ and $z$. We can then  organize these coefficients in a matrix, which we will call $M=\{m_{i,j}\}$,  so that we obtain
\[MV_{x,y, z}=V_{\xi x, \xi y, z}.\]

Notice that  elements from $E(x,y, z)$ (and from $E(\xi x, \xi y, z)$) which are shorter appear before elements which are longer in our vectors $V_{x,y, z}$ and $V_{\xi x, \xi y, z}$. Therefore by Lemma \ref{fajny} it follows that $M$ is an  upper triangular matrix.

%Observe that the first four elements in the  vector  
% $V_{\xi x, \xi y, z}$ are $(\xi x)*((\xi y)*z)$, $((\xi x)*(\xi y))*z$, $(\xi y)*((\xi x)*z) $ and $ ((\xi y)*(\xi x))*z$ (arranged in some order). 
%We can assume that $(\xi x)*((\xi y)*z)$ is the first entry in the vector $V_{\xi x, \xi y, z}$ (so $x*(y*z)$ is the first entry in the vector $V_{x, y, z}$).  
% By Lemma \ref{fajny} applied several times,  $(\xi x)*((\xi y)*z)$ can be written as $\xi ^{2}(x*(y*z))$ plus  elements  from $E(x,y,z)$ of degree larger than $3$ (so these elements are %products of more than three elements from the set $\{x,y,z\}$). It follows that the first diagonal entry in $M$ equals $\xi ^{2}$, so $m_{1, 1}=\xi ^{2}$. 
%Observe that the following diagonal entries will be equal to $\xi ^{i}$ where $1<i< k$, where $k$ is the nilpotence index of our brace.
 %For example, $(\xi x)*((\xi x)*((\xi x)*z))$ can be written using Lemma \ref{fajny} as $\xi ^{3}(x*(x*(x*z)))$ plus  elements of degree larger than $4$.
%
%Therefore $M$ is an upper triangular matrix with all diagonal entries of the form $e^{i}$, where $1<i< k$, where $k$ is the nilpotence index of our brace. 
%It follows that all diagonal entries of the matrix  $\xi ^{(p-1)-1}M$ are $\xi ^{p-2+i}$ where $1<i<k<p$. 
%Observe that $\xi ^{p-2+i}\equiv \xi ^{i-1}\mod p^{p}$ since $\xi ^{p-1}\equiv 1 \mod p^{p}$ so the diagonal entries are congruent to $\xi ^{i-1}$ modulo $p$ where $0<i-1<p-1$.  By %Lemma \ref{3}  diagonal entries of   $\xi ^{p-1-1}M$ are
% not congruent to $1$ modulo $p$.

Notice that $M$ does not depend on $x, y$ and $z$, as we only used relations from Lemma \ref{fajny} to construct it (and the fact that products of $k$ or more elements in $A$ are $0$). 
It follows that for every $n$, \[V_{\xi ^{n}x, \xi ^{n}y, z}=M^{n}V_{x,y, z}.\]
Observe that for $x,y\in A$ we have $\xi ^{p-1}x=x$ and $\xi ^{p-1}y=y$ because  $\xi ^{p-1}$ is congruent to $1$ modulo $p^{n}$ and $p^{n}x=p^{n}y=0$ since the group $(A, +)$ has cardinality $p^{n}$. 

 Therefore,  
\[V_{x, y, z}=V_{\xi ^{p-1}x, \xi ^{p-1}y, z}=M^{p-1}V_{x, y, z}=(\xi ^{(p-1)-1}M)^{p-1}V_{x, y, z}.\]

Notice that there is a vector $V$ with integer entries such that 
\[ C(n)=V^{T}V_{\xi ^{n}a, \xi ^{n}b, c}=V^{T}M^{n}V_{a, b, c}\] for each $n$, where $V^{T}$ is the transposition of $V$. 

 Denote $M^{0}=I$ the identity matrix.  Recall that $\xi ^{p-1}\equiv 1 \mod p^{n}$, since $n<p$, hence 

\[\sum_{n=0}^{p-2}\xi ^{(p-1)-n}C(n)=\sum_{n=0}^{p-2}(\xi ^{(p-1)-1})^{n}C(n)=V^{T}\sum_{n=0}^{p-2}(\xi ^{(p-1)-1}M)^{n}V_{a,b,c}.\]

Denote  \[Q_{a,b,c}=\sum_{n=0}^{p-2}(\xi ^{(p-1)-1}M)^{n}V_{a,b,c},\] then $\sum_{n=0}^{p-2}\xi ^{(p-1)-n}C(n)=V^{T}Q_{a, b, c}$. We need to show that 
 $\sum_{n=0}^{p-2}(\xi ^{(p-1)-1})^{n}C(n)=0$. It suffices to 
show that all entries of the vector $Q_{a, b, c}$ are zero.
Notice that 
\[(I-\xi ^{(p-1)-1}M)\sum_{n=0}^{p-2} (\xi ^{(p-1)-1}M)^{n}=I-(\xi ^{(p-1)-1}M)^{p-1},\] 
 therefore 
\[(I-\xi ^{(p-1)-1}M)Q_{a, b, c}=(I-(\xi ^{(p-1)-1}M)^{p-1})V_{a, b, c}=0,\]
 as shown before.
Therefore, entries of the vector $(I-\xi ^{(p-1)-1}M)Q_{a,b,c}$ are zero. Recall that entries of the vector $Q_{a, b, c}$ are elements of $A$ and therefore their additive orders are powers of $p$. 
 Recall that  that the diagonal entries of the matrix $I-\xi ^{(p-1)-1}M$ are coprime with $p$, and that this matrix is upper triangular. Therefore, $(I-\xi ^{(p-1)-1}M)Q_{a, b, c}=0$ implies $Q_{a, b, c}=0$, as required. 
\end{proof}

 We will now prove the main result of this section.

\begin{theorem}\label{main} Let $p>2$ be a prime number. 
  Let $A$ be a strongly nilpotent brace with nilpotency index $k$ and cardinality $p^{n}$ for some prime number $p$, and some natural numbers $k,n$ such that $k, n+1<p$. Define the binary operation $\cdot $ on $A$ as follows 
\[a\cdot b=\sum_{i=0}^{p-2}\xi  ^{p-1-i}((\xi ^{i}a)* b),\]
    for $a, b\in A$, where $\xi ^{i}a$ denotes the sum of $\xi ^{i}$ copies of element $a$.
Then \[(a\cdot b)\cdot c-a\cdot (b \cdot c)=(b\cdot a)\cdot c-(b\cdot a) \cdot c     \] for every $a, b, c\in A$. 
\end{theorem}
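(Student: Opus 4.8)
First, a small correction to the statement as displayed: the right‑hand side must read $(b\cdot a)\cdot c-b\cdot(a\cdot c)$, i.e. the left pre‑Lie identity. Writing $L_a$ for the left multiplication $b\mapsto a\cdot b$ and recalling from Proposition~\ref{12345} that $L_a$ is additive, the statement to be proved is equivalent to the operator identity
\[L_{a\cdot b}-L_{b\cdot a}=L_aL_b-L_bL_a\qquad(a,b\in A),\]
that is, to the assertion that the associator $(a\cdot b)\cdot c-a\cdot(b\cdot c)$ is symmetric in its first two arguments. So the plan is to compute that associator explicitly.

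The first step is to expand $a\cdot b$ as a universal (brace‑independent) $\mathbb{Z}_{(p)}$‑linear combination of iterated products under $*$. Since $A$ is strongly nilpotent of index $k$, every product of $k$ or more elements vanishes, so Lemma~\ref{fajny} may be applied finitely many times to rewrite $(\xi^{i}a)*b=(a+\dots+a)*b$, and hence $a\cdot b=\sum_{i=0}^{p-2}\xi^{p-1-i}(\xi^{i}a)*b$, as such a combination. One gets $a\cdot b=-\sum_{m\ge 1}\tfrac{(-1)^{m-1}}{m}\,T_m(a,b)$, where $T_m(a,b)$ is the universal length‑$(m{+}1)$ combination with $m$ leading copies of $a$ and $b$ at the end, $T_1(a,b)=a*b$ and $T_2(a,b)=a*(a*b)-(a*a)*b$; the coefficients $-\tfrac{(-1)^{m-1}}{m}$ emerge because $c_m:=\sum_{i=0}^{p-2}\xi^{p-1-i}\binom{\xi^{i}}{m}$ is a combination of the geometric sums $\sum_{i=0}^{p-2}\xi^{i\ell}$ for $1\le\ell\le m$, and by Lemma~\ref{3} (using $\xi^{p-1}\equiv1\pmod{p^{p}}$ and $\xi^{j}\not\equiv1\pmod p$ for $0<j<p-1$, together with $m\le k-1<p-1$) all of these vanish modulo $p^{n}$ except the weight‑one term. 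Substituting this expression for $a\cdot b$ into the associator and collecting — again expanding the outer $T_\ell$ via Lemma~\ref{fajny} — the theorem reduces to a single universal identity among iterated $*$‑products of $a,b,c$ of length $<k$, with coefficients in $\mathbb{Z}_{(p)}$ independent of $A$.

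To prove that universal identity I would induct on the nilpotency index $k$. The cases $k\le 3$ are immediate, since then all length‑$3$ products vanish and the associator is identically $0$. For the inductive step, note that $A^{[k-1]}$ is an ideal of the brace, that $A/A^{[k-1]}$ is strongly nilpotent of index at most $k-1$ with the same defining formula for $\cdot$ (Proposition~\ref{12345} still applies, since $n$ and the index only drop), and hence, by the induction hypothesis, that the defect $D(a,b,c):=\big((a\cdot b)\cdot c-a\cdot(b\cdot c)\big)-\big((b\cdot a)\cdot c-b\cdot(a\cdot c)\big)$ lies in $A^{[k-1]}=A^{[k-1]}/A^{[k]}$. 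Since $A*A^{[k-1]}=A^{[k-1]}*A=0$, it then remains to show that this single ``top‑degree'' element vanishes, and this is the main obstacle — it is genuinely not formal, because the brace axioms alone do not make the associated‑graded operation $\overline{*}$ left‑symmetric, so the cancellation must use the specific coefficients $\tfrac{(-1)^{m-1}}{m}$ forced by the choice of $\xi$. I would attack it by the device of Proposition~\ref{12345}: arrange the relevant products of $a,b,c$ in a vector, observe that simultaneously scaling $a,b,c$ by $\xi$ acts on it through an upper‑triangular integer matrix $M$ not depending on $a,b,c$, and use $\xi^{p-1}\equiv1\pmod{p^{n}}$ together with the invertibility of $I-\xi^{p-2}M$ modulo $p$ (its diagonal entries are coprime to $p$) to telescope the $\xi$‑weighted sum and force $D(a,b,c)=0$. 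Here the hypothesis $k<p$ is exactly what keeps the occurring $\xi$‑exponents in the range covered by Lemma~\ref{3}, and $n+1<p$ is what makes the scalars $\tfrac1m$ for $m\le k-1$ available modulo $p^{n}$.

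Finally, the conceptual reason the argument works — and a possible alternative to the last step — is that the formula for $\cdot$ inverts Rump's passage from pre‑Lie rings to braces: the brace relation $1+\lambda_{x\circ y}=(1+\lambda_x)(1+\lambda_y)$ with $\lambda_x(b)=x*b$, together with the ``group of flows'' description $a\circ b=a+e^{L_{\Omega(a)}}(b)$ of Theorem~\ref{e}, says $\mu_x:=1+\lambda_x=\exp(L_{\Omega(x)})$ as operators, so that $L_a=\log\mu_{\Omega^{-1}(a)}$, from which the operator identity above is immediate. One would then only need to check that the explicit $\xi$‑formula reproduces this $L_a$ — again a computation that uses $k,n+1<p$ so that the relevant logarithms and $\Omega$‑series make sense over $\mathbb{Z}/p^{n}\mathbb{Z}$.
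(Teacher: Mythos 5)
You are right that the displayed right-hand side is a typo for $(b\cdot a)\cdot c-b\cdot(a\cdot c)$, and your opening reduction, as well as the computation of the ``logarithmic'' coefficients $c_m=\sum_{i=0}^{p-2}\xi^{p-1-i}\binom{\xi^{i}}{m}\equiv(-1)^{m}m^{-1}\pmod{p^{n}}$, is sound in spirit (it is consistent with the computation in Theorem \ref{y}). The genuine gap is in the step you yourself flag as the main obstacle: killing the defect $D(a,b,c)$. Your reduction first converts $a\cdot b$ into a universal $*$-expression with \emph{fixed} coefficients $(-1)^{m-1}/m$, and in doing so you integrate out the defining $\xi$-weighted sum; after that there is no geometric sum left to telescope, so the device of Proposition \ref{12345} has nothing to act on. The proposed substitute, ``scale $a,b,c$ simultaneously by $\xi$,'' only yields statements of the form $\sum_{n=0}^{p-2}\xi^{(p-1)-n}F(\xi^{n}a,\xi^{n}b,c)=0$ for universal $F$ whose monomials have $\{a,b\}$-degree between $2$ and $p-2$; it does not give $F(a,b,c)=0$. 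Indeed, at your top degree the scaling acts (modulo length-$k$ products) diagonally by $\xi^{n(k-2)}$, so the weighted sum equals $\bigl(\sum_{n}\xi^{(p-1)-n+n(k-2)}\bigr)D_{\mathrm{top}}$, and for $4\le k\le p-1$ that scalar is itself $\equiv 0 \pmod{p^{n}}$ by Lemma \ref{3}; the conclusion is vacuous and cannot force $D_{\mathrm{top}}=0$. Note also that in the scaling device one must never scale a variable that can occur with multiplicity one (the diagonal entry of $I-\xi^{p-2}M$ would then be $1-\xi^{p-1}\equiv 0$), which is why a splitting according to which variable repeats is unavoidable.

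The paper's proof keeps the double $\xi$-sum explicit: Lemma \ref{fajny} gives $x*(y*z)-(x*y)*z-y*(x*z)+(y*x)*z=d(y,x,z)-d(x,y,z)$ with $d$ universal and supported on monomials containing at least three letters from $\{x,y\}$ (and $z$ once at the end); substituting $x=\xi^{i}a$, $y=\xi^{j}b$, $z=c$, weighting by $\xi^{2(p-1)-i-j}$ and summing, the difference of associators becomes exactly $\sum_{i,j}\xi^{2(p-1)-i-j}\bigl(d(\xi^{j}b,\xi^{i}a,c)-d(\xi^{i}a,\xi^{j}b,c)\bigr)$, and this is killed by splitting $d=w+v$ according to whether $a$ or $b$ occurs at least twice and applying the one-variable upper-triangular matrix argument, whose diagonal entries $\xi^{t}$ with $2\le t\le k-1<p-1$ make $I-\xi^{p-2}M$ invertible modulo $p$. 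Your proposal omits this splitting and has no analogue of it, so the crucial cancellation is not established; no induction on $k$ is needed once the argument is set up this way. Finally, your ``conceptual'' alternative is circular: writing $1+\lambda_{x}=\exp(L_{\Omega(x)})$ presupposes that the brace is the group of flows of a pre-Lie ring (the setting of Theorem \ref{e} and Theorem \ref{y}), which for an arbitrary strongly nilpotent brace is precisely what the present theorem is needed to establish.
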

\begin{proof}  By Lemma \ref{fajny} applied several times we get 
\[(x+y)*z=x*z+y*z +x*(y*z)-(x*y)*z +d(x,y,z),\]
\[ (y+x)*z=x*z+y*z+y*(x*z)-(y*x)*z +d(y,x, z),\]
where $d(x, y, z)=E^{T}V_{x, y, z}$ for some vector $E$ with integer entries which does not depend of $x, y, z$, and where $V_{x, y, z}$ is as in Notation $1$ (moreover $d(x, y, z)$ is a combination of elements with at least 3 occurences of elements from the set $\{x, y\}$). 
 It follows that 
\[x*(y*z)-(x*y)*z-y*(x*z)+(y*x)*z=d(y, x, z)-d(x, y, z).\]

Let $a,b,c\in A$ and let $i, j$ be natural numbers.
  Applying it to $x=\xi ^{i}a$, $y=\xi ^{j}b$, $z=c$ we get 
\[(\xi ^{i}a)*((\xi ^{j}b)*c)-((\xi ^{i}a)*(\xi ^{j}b))*c+d(\xi ^{i}a, \xi ^{j}b,c)=\]
\[ =(\xi ^{j}b)*((\xi ^{i}a)*c)-((\xi ^{j}b)*(\xi ^{i}a))*c+d(\xi ^{j}b,  \xi ^{i}a, c).\]

Notice that  
\[a\cdot (b\cdot c)=a\cdot \sum_{j=0}^{p-2} \xi ^{p-1-j}((\xi ^{j}b)* c)=\sum_{i=0}^{p-2}\xi ^{p-1-i}((\xi ^{i}a)* 
\sum_{j=0}^{p-2}\xi ^{p-1-j}((\xi ^{j}b)* c)).\]

 Consequently, 
\[a\cdot (b\cdot c)=\sum_{i, j=0}^{p-2}\xi ^{p-1-i+p-1-j}((\xi ^{i}a)*((\xi ^{j}b)* c)).\]

On the other hand 

\[(a\cdot b)\cdot c=(\sum_{i=0}^{p-2}\xi ^{p-1-i}((\xi ^{i}a)* b))\cdot c=\sum_{i=0}^{p-2}\xi ^{p-1-i}(((\xi ^{i}a)* b)\cdot c),\]
 where the last equation follows from Proposition \ref{12345}. 

Consequently, 

\[(a\cdot b)\cdot c=\sum_{i,j=0}^{p-2}\xi ^{p-1-i+p-1-j}((\xi ^{j}((\xi ^{i}a)* b))* c)=\sum_{i,j=0}^{p-2}\xi ^{p-1-i+p-1-j}((\xi ^{i}a)* (\xi ^{j}b))* c,\]

 Recall a previous equation, multiplied by $e^{p-1-i+p-1-j}$ on both sides: 

\[\xi ^{p-1-i+p-1-j}(\xi ^{i}a)*((\xi ^{j}b)*c)-\xi ^{p-1-i+p-1-j}((\xi ^{i}a)*(\xi ^{j}b))*c+\xi ^{p-1-i+p-1-j}d(\xi ^{i}a, \xi ^{j}b,c)=\]
\[ =\xi ^{p-1-i+p-1-j}(\xi ^{j}b)*((\xi ^{i}a)*c)-\xi ^{p-1-i+p-1-j}((\xi ^{j}b)*(\xi ^{i}a))*c+\xi ^{p-1-i+p-1-j}d(\xi ^{j}b,  \xi ^{i}a, c).\]

 By summing the above equation for all $0\leq i,j\leq p-2$ and subtracting the previous equations we obtain that

\[a\cdot (b\cdot c)- (a\cdot b)\cdot c+\sum_{i,j=0}^{p-2}\xi ^{p-1-i+p-1-j}d(\xi ^{i}a, \xi ^{j}b,c)=\]
\[b\cdot (a\cdot c)- (b\cdot a)\cdot c+\sum_{i,j=0}^{p-2}\xi ^{p-1-i+p-1-j}d( \xi ^{j}b, \xi ^{i}a, c).\]

 So it remains to show that $\sum_{i,j=0}^{p-2}\xi ^{p-1-i+p-1-j}d(\xi ^{i}a, \xi ^{j}b,c)=0$, for all $a, b, c\in A$ 
 (and hence $\sum_{i,j=0}^{p-2}\xi ^{p-1-i+p-1-j}d( \xi ^{j}b, \xi^{i}a, c)=0$).

{\em Proof that  $\sum_{i,j=0}^{p-2}\xi ^{p-1-i+p-1-j}d(\xi ^{i}a, \xi ^{j}b,c)=0$}.  

 Notice that $d(a,b,c)=w(a,b,c)+v(a,b,c)$ where $w(a,b,c)$ contains all the products of elements $a,b,c$ which appear as summands in  $d(a,b,c)$ and in which $a$ appears at least twice, and 
$v(a,b,c)$ is a sum of products which are summands in $d(a,b,c)$ and in which $a$ appears only once (and hence $b$ appears at least twice).
It suffices to show that $\sum_{i,j=0}^{p-2}\xi ^{p-1-i+p-1-j}w(\xi ^{i}a, \xi ^{j}b,c)=0$ and $\sum_{i,j=0}^{p-2}\xi ^{p-1-i+p-1-j}v(\xi ^{i}a, \xi ^{j}b,c)=0$.
 Observe that  it suffices to show that $\sum_{i=0}^{p-2}\xi ^{p-1-i}w(\xi ^{i}a, b',c)=0$
   and  $\sum_{j=0}^{p-2}\xi ^{p-1-j}v(a', \xi ^{j}b, c)=0$ for any $a, a',  b, b', c\in A$ (notice that $a', b'$ should not be confused with inverses of $a$ and $b$).

 We will first show that \[\sum_{i=0}^{p-2}\xi ^{p-1-i}w(\xi ^{i}a, b',c)=0.\]

Observe that there is a vector $W$ with integer entries such that $w(a,b',c)=W^{T}V_{a,b',c}'$ where $V_{a,b',c}'$ is a vector constructed as in Notation 1 but only including as entries these products from $E(a,b',c)$  in which $a$ appears at least twice.
Consequently, 
\[ w(\xi ^{n}a, b',c)=W^{T}V'_{\xi ^{n}a, b',c}=W^{T}M^{n}V'_{a, b',c}\] for each $n$, where $W^{T}$ is the transposition of $W$ where $M$ is some upper triangular  matrix with integer entries such that 
  $V'_{\xi a, b, c}=MV'_{a,b',c}$ and $V'_{\xi ^{i}a, b, c}=M^{i}V'_{a,b',c}$ for every $i$  (this can be shown that such matrix $M$ exist  similarly as  in Proposition \ref{12345}).

Observe that the diagonal entries of the matrix $I-\xi ^{(p-1)-1}M$ are coprime with $p$, by Lemma \ref{3}. Indeed,  
 diagonal entries of $M$ are $\xi ^{i}$ where $2\leq i<k<p$, because $a$ appears at least twice in each product which is an entry in $V'_{a, b', c}$  (and because the nilpotency index of $A$ is $k$, so all products of length  longer than $k-1$ are zero).
 Denote $M^{0}=I$ the identity matrix. 
Now we calculate

\[\sum_{n=0}^{p-2}\xi ^{(p-1)-n}w(\xi ^{n}a, b',c) =\sum_{n=0}^{p-2}(\xi ^{(p-1)-1})^{n}w(\xi ^{n}a, b',c) =W^{T}\sum_{n=0}^{p-2}(\xi ^{(p-1)-1}M)^{n}V'_{a,b',c},\]
 where $e^{0}=1$ and $M^{0}=I$ the identity matrix.  
Denote  \[Q_{a, b', c}=\sum_{n=0}^{p-2}(\xi ^{(p-1)-1}M)^{n}V'_{a, b', c},\] then $\sum_{n=0}^{p-2}(\xi ^{(p-1)-1})^{n}w(\xi ^{n}a, b',c)=W^{T}Q_{a, b', c}$. We need to show that 
 \[\sum_{n=0}^{p-2}(\xi ^{(p-1)-1})^{n}w(\xi ^{n}a, b',c)=0.\] It suffices to 
 show that all entries of the vector $Q_{a, b', c}$ are zero.
Notice that 
\[(I-\xi ^{(p-1)-1}M)\sum_{n=0}^{p-2} (\xi ^{(p-1)-1}M)^{n}=I-(\xi ^{(p-1)-1}M)^{p-1},\]
 therefore 
\[(I-\xi ^{(p-1)-1}M)Q_{a, b', c}=(I-(\xi ^{(p-1)-1}M)^{p-1})V_{a, b', c}=0.\]
Therefore, entries of the vector $(I-\xi ^{(p-1)-1}M)Q_{a, b', c}$ are zero. Recall that entries of the vector $Q_{a, b', c}$ are elements of $A$ and therefore their additive orders are powers of $p$. 
 Recall that  that the diagonal entries of the matrix $(I-\xi ^{(p-1)-1}M)$ are coprime with $p$. Therefore, $(I-\xi ^{(p-1)-1}M)Q_{a, b',c}=0$ implies $Q_{a, b',c}=0$, as required.

 The proof that 
 $\sum_{j=0}^{p-2}\xi ^{p-1-j}v(a', \xi ^{j}b,c)=0$ for all $a', b,c\in A$ is similar.
 Observe that there is a vector $W'$ with integer entries  such that $v(a',b,c)=W'^{T}V_{a', b,c}''$, where $V_{a', b, c}''$ is a vector constructed as in Notation 1 but only including as entries those products in which $b$ appears at least twice. 
 By applying Lemma \ref{fajny} several times,  there exists a matrix $\bar M$ such that 
$V_{a', \xi b,  c}''={\bar M}V_{a', b,c}''$ and ${\bar M}$ is upper triangular with diagonal entries $\xi ^{i}$ for $i\geq 2$ (because $b$ appears at least twice  in each product which is an entry in $V_{a',b,c}''$)  and for $i< k<p$. 
 Similarly as before, \[V_{a',b,c}''=V_{a', \xi ^{p-1}b, c}''={\bar M}^{p-1}V_{a', b,c}''={\xi ^{p-1}\bar M}^{p-1}V_{a', b,c}''.\] It follows
 that 
\[ \sum_{j=0}^{p-2}\xi ^{p-1-j}v(a', \xi ^{j}b,c)=\sum_{j=0}^{p-2}\xi ^{p-1-j}W'^{T}V_{a', \xi ^{j}b,c}''=\sum_{j=0}^{p-2}W'^{T}(\xi ^{p-1-1}{\bar M})^{j}V_{a',b,c}''.\]
 
Observe that  $(1-\xi ^{p-1-1}{\bar M})\sum_{j=0}^{p-2}(\xi ^{p-1-1}{\bar M})^{j}V_{a',b,c}''=0$. Recall that  the diagonal entries of the upper triangular  matrix $I-\xi ^{p-1-1}{\bar M}$ are coprime with $p$. Consequently, 
 $\sum_{j=0}^{p-2}(\xi ^{p-1-1}{\bar M})^{j}V_{a',b,c}''=0$ which implies 
 \[\sum_{j=0}^{p-2}\xi ^{p-1-j}v(a', \xi ^{j}b, c)=0,\] this concudes the proof. 
\end{proof} 

$ $

 Let $A$ be a set and let  $(A, +_{1}, \circ _{1})$ and $(A, +_{2}, \circ _{2})$ be two braces. We say that these braces are the same if for each $a,b\in A$ we have $a+_{1} b=a +_{2} b$ and $a\circ _{1} b=a\circ _{2}b $. Otherwise, we say that these two braces are distinct. Similarly,  
  we say that pre-Lie rings  $(A, +_{1}, \cdot _{1})$ and $(A, +_{2}, \cdot _{2})$ are the same if and only if  for each $a,b\in A$ we have $a+_{1} b=a +_{2} b$ and $a\cdot _{1} b=a\cdot _{2}b $.
\begin{proposition}\label{2} 
Let notation be as in Theorem \ref{main}, then $a\cdot b$ is a sum of products of elements $a$ and $b$ under operation $*$ with $a$ appearing at least once and $b$ appearing exactly once at the end of each product. Moreover this formula depends only of the nilpotency index of the brace $A$. Moreover, the obtained pre-Lie ring $(A, +, \cdot )$ has the same nilpotency index as the brace $A$. In particular pre-Lie rings obtained from distinct braces, using Theorem \ref{main}, are distinct as pre-Lie rings.
\end{proposition}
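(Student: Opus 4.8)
The plan is to read everything off the defining formula $a\cdot b=\sum_{i=0}^{p-2}\xi^{\,p-1-i}\bigl((\xi^i a)*b\bigr)$ by expanding the inner terms with Lemma~\ref{fajny}. First I would show, by induction on the positive integer $m$, that
\[(ma)*b=m\,(a*b)+P_m(a,b),\]
where $P_m(a,b)$ is a $\mathbb Z$-linear combination of $*$-products of copies of $a$ in which $a$ occurs at least twice and $b$ occurs exactly once, at the end: writing $ma=a+(m-1)a$ and applying Lemma~\ref{fajny} with ``$c$''${}=b$, every correction term $(d_i*d_i')*c-d_i*(d_i'*c)$ has $b$ once at its end and involves only the $d_i,d_i'$, which are built from the single letter $a$ and together carry $a$ at least twice. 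Substituting $m=\xi^i$, multiplying by $\xi^{\,p-1-i}$ and summing over $i$, and using $\xi^{p-1}\equiv1\pmod{p^p}$ (Lemma~\ref{3}) together with $p^n(A,+)=0$, the linear part collapses to $(p-1)(a*b)$. This proves the first assertion and shows in addition that $a*b$ occurs in $a\cdot b$ with coefficient $p-1$, a unit modulo $p^n$; write $\lambda$ for its inverse. Moreover, the integer coefficient of every $*$-monomial occurring in $a\cdot b$ is computed purely from the universal relations of Lemma~\ref{fajny}, the fixed scalars $\xi,\dots,\xi^{p-2}$, and the truncation rule ``$*$-products of length $\ge k$ vanish''; hence it depends only on $k$, which is the second assertion.

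For the nilpotency index, write $I_m\subseteq A$ for the additive span of all $*$-products of length $\ge m$ and $J_m$ for that of all $\cdot$-products of length $\ge m$. Then $I_m=A^{[m]}$ (immediate by induction from the recursive definition of $A^{[m]}$), $J_m$ is the corresponding chain of the pre-Lie ring $(A,+,\cdot)$, and ``brace index $k$'' means $I_k=0\ne I_{k-1}$ while ``pre-Lie index $k$'' means $J_k=0\ne J_{k-1}$. The inclusion $J_m\subseteq I_m$ is an easy induction on $m$: a $\cdot$-product of length $\ge m$ has the shape $U\cdot V$ with $U,V$ $\cdot$-products of lengths $\ge m_1,\ge m_2$ and $m_1+m_2\ge m$; by induction $U\in I_{m_1}$, $V\in I_{m_2}$, and applying the first assertion to the $*$-monomials into which they expand shows $U\cdot V$ is a $\mathbb Z$-combination of $*$-products of length $\ge m_1+m_2\ge m$. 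In particular $(A,+,\cdot)$ is strongly nilpotent of index $\le k$.

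The reverse inclusion $I_m\subseteq J_m$ is the crux, and the step I expect to be the main obstacle. I would prove by induction on $m$ that every $*$-product $P$ of length exactly $m$ lies in $J_m+I_{m+1}$; since $I_{m+1}\subseteq J_{m+1}+I_{m+2}\subseteq J_m+I_{m+2}$ and $A$ is strongly nilpotent, iterating then gives $I_m\subseteq J_m$, hence $I_m=J_m$ for all $m$ and equality of the two indices. For the inductive step, write $P=U*V$ with $U,V$ shorter $*$-products of lengths $m_1,m_2$. By the first assertion $U*V=\lambda(U\cdot V)-\lambda\,r(U,V)$, where $r(U,V)$ is a $\mathbb Z$-combination of $*$-products having $U$ as a factor at least twice; as $m_1\ge1$ these have length $\ge 2m_1+m_2>m$, so $r(U,V)\in I_{m+1}$. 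For $U\cdot V$, use the inductive hypothesis to write $U=\sum_s\alpha_s X_s+u'$ and $V=\sum_t\beta_t Y_t+v'$ with $X_s,Y_t$ $\cdot$-monomials of lengths $\ge m_1,\ge m_2$ and $u'\in I_{m_1+1}$, $v'\in I_{m_2+1}$; then bi-additivity of $\cdot$ (Proposition~\ref{12345}), the already proved inclusions $J_\ell\subseteq I_\ell$, and the first assertion again (to see that $u'\cdot V$ and each $X_s\cdot v'$ lie in $I_{m+1}$) yield $U\cdot V\equiv\sum_{s,t}\alpha_s\beta_t\,(X_s\cdot Y_t)\pmod{I_{m+1}}$, and each $X_s\cdot Y_t$ is a $\cdot$-monomial of length $\ge m$, i.e.\ lies in $J_m$. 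Hence $P\in J_m+I_{m+1}$, completing the induction.

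It remains to deduce the last statement. If two distinct braces have different additions, the resulting pre-Lie rings already have different additions; so suppose they share the addition $+$ but have $*_1\ne *_2$, and suppose for contradiction that $\cdot_1=\cdot_2$. By the index equality just proved, both braces have one and the same nilpotency index $k$. The relation $a\cdot b=(p-1)(a*b)+(\text{$*$-monomials of length}\ge 3)$ is triangular in length, so it can be inverted: solving recursively on length (which terminates because $*$-products of length $\ge k$ vanish) expresses $a*b$ as a $\mathbb Z$-combination of $\cdot$-monomials in $a,b$ with $a$ occurring at least once and $b$ exactly once at the end, and, exactly as in the second assertion, the coefficients of this inverse formula depend only on $k$. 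Feeding $(*_1,\cdot_1)$ and $(*_2,\cdot_2)$ into this one common formula would give $a*_1b=a*_2b$ for all $a,b$, a contradiction. Hence distinct braces produce distinct pre-Lie rings.
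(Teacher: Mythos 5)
Your proposal is correct and follows essentially the same route as the paper: expand $a\cdot b$ via Lemma \ref{fajny} into $(p-1)(a*b)$ plus universal higher $*$-terms with $b$ once at the end, use that $p-1$ is invertible modulo $p^{n}$ to invert this triangular relation and recover $*$ (hence the brace) from $\cdot$, and deduce both the equality of nilpotency indices and injectivity from that recovery. Your $I_m$/$J_m$ filtration argument is just a more explicit write-up of the paper's terse observations that $\cdot$-products of $i$ elements lie in $A^{[i]}$ and that the inversion can be iterated, so there is no substantive difference.
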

\begin{proof} The proof is similar to the proof of Theorem 8 from \cite{Lazard}. 
 Let $E_{a,b}\subseteq A$  denote the set consisting of any product of elements $a$ and one element $b$ at the end of each product under the operation $*$,  in any order, with any distribution of brackets, each product consisting of at least 2 elements $a$.
 By Lemma \ref{fajny}, 
\[a\cdot  b=(p-1)a* b +\sum_{w\in E_{a,b}}\alpha _{w}w\] where 
$\alpha _{w}\in \mathbb Z$ do not depend on $a,b$, but only on their arrangement in word $w$ as an element of $E_{a,b}$. Moreover, 
 each $w$ is a product of at least $3$ elements from the set $\{a, b\}$. Observe that coefficients $\alpha _{w}$ do not  depend on the brace $A$ as they were constructed using the formula from Lemma \ref{fajny} which holds in every strongly nilpotent brace. Therefore,  for any given $n$, the same formula will hold for all braces of nilpotency index $k$.
 Therefore $a*b=-(1+p+p^{2}+\ldots +p^{n-1})(a\cdot b-\sum_{w\in E_{a,b}}\alpha _{w}w)$, and now we can use this formula several times  to write every element from $E_{a,b}$ as a product of elements $a$ and $b$ under the operation $\cdot $ (it will work because any product of $i$ elements under the operation $\cdot $ will belong to $A^{[i]}$ by the formula $a\cdot  b=(p-1)a* b +\sum_{w\in E_{a,b}}\alpha _{w}w$). Notice also  that if $a\in A^{[i]}$ and $b\in A^{[j]}$ then 
$a\cdot b\in A^{[i+j]}$.
 In this way we can recover the brace $(A,  +, \circ )$ from  the pre-Lie ring $(A, +, \cdot )$. 
Notice that this implies that  the nilpotency index of the obtained pre-Lie ring is the same as the nilpotency index of the brace $A$.
Therefore, two distinct strongly nilpotent braces cannot give the same pre-Lie ring using the formula $a\cdot b=\sum_{i=0}^{p-2}\xi ^{p-1-i}( (\xi ^{i}a)*b)$. 

\end{proof}

\begin{corollary}
 Let $(A, +, \cdot)$  be a pre-Lie ring and let $\alpha $ be a natural number.  Define $a\odot b=\alpha ( a\cdot b)$, then $(A, +, \odot)$ is a pre Lie ring.
\end{corollary}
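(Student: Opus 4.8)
The plan is to verify the three defining identities of a pre-Lie ring for the new operation $\odot$, deriving each one from the corresponding identity for $\cdot$ together with the fact that, for a natural number $\alpha$, the map $x\mapsto \alpha x$ (the $\alpha$-fold sum of $x$ with itself) is an endomorphism of the abelian group $(A,+)$. The one preliminary observation I would record is that integer scaling may be absorbed into either argument of $\cdot$: since $\alpha x=x+\dots+x$ with $\alpha$ summands, the two distributive laws for $\cdot$ give
\[(\alpha x)\cdot y=\alpha(x\cdot y)=x\cdot(\alpha y)\]
for all $x,y\in A$. Iterating this, any $\odot$-product of three elements equals $\alpha^{2}$ times the corresponding $\cdot$-product (and more generally an $m$-fold $\odot$-product equals $\alpha^{m-1}$ times the $m$-fold $\cdot$-product with the same bracketing, though only the cases $m=2,3$ are needed).

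First I would dispatch the two distributive laws. Because $x\mapsto\alpha x$ is additive,
\[(x+y)\odot z=\alpha\bigl((x+y)\cdot z\bigr)=\alpha\bigl(x\cdot z+y\cdot z\bigr)=\alpha(x\cdot z)+\alpha(y\cdot z)=x\odot z+y\odot z,\]
and likewise $x\odot(y+z)=\alpha\bigl(x\cdot(y+z)\bigr)=\alpha(x\cdot y)+\alpha(x\cdot z)=x\odot y+x\odot z$. Only biadditivity of $\cdot$ is used here.

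Next I would treat the pre-Lie identity. Using the preliminary observation,
\[(a\odot b)\odot c=\alpha\bigl((\alpha(a\cdot b))\cdot c\bigr)=\alpha^{2}\bigl((a\cdot b)\cdot c\bigr),\qquad a\odot(b\odot c)=\alpha\bigl(a\cdot(\alpha(b\cdot c))\bigr)=\alpha^{2}\bigl(a\cdot(b\cdot c)\bigr),\]
so $(a\odot b)\odot c-a\odot(b\odot c)=\alpha^{2}\bigl((a\cdot b)\cdot c-a\cdot(b\cdot c)\bigr)$; interchanging $a$ and $b$ gives $(b\odot a)\odot c-b\odot(a\odot c)=\alpha^{2}\bigl((b\cdot a)\cdot c-b\cdot(a\cdot c)\bigr)$. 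Applying the homomorphism $x\mapsto\alpha^{2}x$ to the pre-Lie identity $(a\cdot b)\cdot c-a\cdot(b\cdot c)=(b\cdot a)\cdot c-b\cdot(a\cdot c)$ for $\cdot$ shows the two right-hand sides coincide, hence so do the two left-hand sides, which is exactly the pre-Lie identity for $\odot$.

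There is no genuine obstacle in this argument; the only point requiring a little care is the bookkeeping of the scalar $\alpha$, that is, making sure that at each step the absorption identity $(\alpha x)\cdot y=\alpha(x\cdot y)=x\cdot(\alpha y)$ is invoked in the correct argument so that both sides of the pre-Lie identity pick up exactly the factor $\alpha^{2}$. Since $(A,+)$ is abelian and $\cdot$ is additive in each variable, all of the manipulations above are routine and valid for an arbitrary pre-Lie ring, with no finiteness or nilpotency hypothesis needed.
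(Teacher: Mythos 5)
Your proof is correct and follows essentially the same route as the paper: both verify the two distributive laws from biadditivity and reduce the pre-Lie identity for $\odot$ to the observation that $(a\odot b)\odot c-a\odot(b\odot c)=\alpha^{2}\bigl((a\cdot b)\cdot c-a\cdot(b\cdot c)\bigr)$. Your write-up merely makes explicit the absorption identity $(\alpha x)\cdot y=\alpha(x\cdot y)=x\cdot(\alpha y)$ that the paper leaves implicit.
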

\begin{proof} We see that $(a\odot b)\odot c-a\odot (b\odot c)=\alpha ^{2}((a\cdot b)\cdot c-a\cdot (b\cdot c))$, this implies that $A$ is a pre-Lie ring for every $a,b,c\in A$, this concludes the proof.
\end{proof}

{\bf Notation 2.}   Let $(A, +, \circ)$ be a strongly nilpotent brace of cardinality $p^{n}$ for some prime number $p\geq n+2$. Let  $(A, +, \cdot)$ be the pre-Lie ring obtained from this brace as in Theorem \ref{main}, and let $(A, +, \odot )$ be the pre-Lie ring obtained from $(A, +, \cdot )$ by declaring $a\odot b=-(1+p+p^{2}+\ldots +p^{n-1})(a\cdot b)$. 
 Then we will say that $(A, +, \odot)$ is the pre-Lie ring associated to the brace $(A, +, \circ )$.

\section{From pre-Lie rings to braces}

The passage from pre-Lie rings  to braces is the same as in Rump's paper \cite{Rump} on page 135. In  \cite{Lazard} this method was applied  for pre-Lie algebras over $\mathbb F_{p}$   using the group of flows. For pre-Lie rings the construction is the same and we recall it below.
 In the following section we  give a detailed explanation as to why it also holds  for finite left nilpotent pre-Lie rings.
  Recall that the group of flows was introduced in  \cite{AG}.

 Let $A$ with operations $+$ and $\cdot $ be a nilpotent  pre-Lie ring of nilpotency index $k$ and cardinality $p^{n}$. Recall  that  a pre-Lie ring $A$ is nilpotent of nilpotency index $k$ if  the product of any $k$ elements is zero in this pre-Lie ring, and $k$ is minimal possible. 
 Let $p$ be a prime number larger than $k$ and  $n+1$. Define the brace $(A,+, \circ )$ with the same addition as in the pre-Lie ring $A$ and with the multiplication $\circ $ defined  as in the group of flows as follows. The below exposition  uses the same notation as in \cite{Lazard}.

\begin{enumerate}

\item Let $a\in A$, and let  $L_{a}:A\rightarrow A$ denote the left multiplication by $a$, so
$L_{a}(b)=a\cdot b$.
 Define $L_{c}\cdot L_{b}(a)=L_{c}(L_{b}(a))=c\cdot (b\cdot a)$.
 Define \[e^{L_{a}}(b)=b+a\cdot b+{\frac 1{2!}}a\cdot (a\cdot b)+{\frac 1{3!}}a\cdot (a\cdot (a\cdot b))+\cdots \]
 where the sum `stops' at place $k$, since the nilpotency index of $A$ is $k$. This is well defined since $p>{k}$.

\item  We can formally add element $1$ to $A$, so $1\cdot a=a\cdot 1=a$ in our pre-Lie ring (see the next section)  and
 define \[W(a)=e^{L_{a}}(1)-1=a+{\frac 1{2!}}a\cdot a+{\frac 1{3!}}a\cdot (a\cdot a)+ \cdots \]
 where the sum `stops' at  place $k$. 
 Notice that $W:A\rightarrow A$ is a bijective function, provided that $A$ is a nilpotent pre-Lie ring.

\item Let $\Omega :A\rightarrow A$ be the inverse function to the function $W$, so  $\Omega (W(a))=W(\Omega (a))=a$.
  Following \cite{M} the first terms of $\Omega $ are
\[ \Omega (a)=  a-{\frac 12}a\cdot a +{\frac 14} (a\cdot a)\cdot a +{\frac {1}{12}}a\cdot (a\cdot a) +\ldots \]
 where the sum stops at place $k$. 
In \cite{M} the formula for $\Omega $ is given using Bernoulli numbers. This assures that $p$ does not appear in a denominator.
\item Define\[a\circ b=a+e^{L_{\Omega (a)}}(b).\]
 Here the addition is the same as in the pre-Lie ring $A$.
It was shown in \cite{AG}  that $(A, \circ )$ is a group.  The same argument will work in our case, as $(W(a)\circ W(b))\circ W(c)=W(a)\circ (W(b)\circ W(c))$ for $a,b,c\in A$ by BCH formula (at this stage the result is related to Lazard's correspondence).  We can immediately  see that $(A, +, \circ )$ is a left brace because
 \[a\circ (b+c)+a=a+e^{L_{\Omega (a)}}(b+c)+a=(a+e^{L_{\Omega (a)}}(b))+(a+e^{L_{\Omega (a)}}(c))=a\circ b+a\circ c.\]
\end{enumerate}
\section{The case of left nilpotent but not necessarily right nilpotent pre-Lie rings}

In this section we give additional explanations as to why the formula from the previous section also works for left nilpotent pre-Lie rings. 
 Notice that if $A$ is a left nilpotent pre-Lie ring of cardinality $p^{n}$ (where $p$ is prime) then $A^{n+1}=0$ since $A^{i}=A^{i+1}=A\cdot A^{i}$ implies $A^{i}=A^{i+1}=A^{i+2}=\ldots =0$. 
 Therefore, the formula for function $W$ from the previous section 
 is well defined. We give below detailed explanation why $W$ is also well defined for pre-Lie rings which are not pre-Lie algebras, for the reader's convenience.

 It is known that an identity element can be added to a pre-Lie algebra. Notice that the well known  construction of adding the identity element to an associative ring will also work for pre-Lie rings.

{\bf The Dorroh extension.} Each pre-Lie ring $A$  can be embedded in a ring with identity. In fact, if $
A$ is an arbitrary pre-Lie ring, then $(A\times \mathbb Z, +, \cdot )$
 together with 
\[(r,n)+(s,m)=(r+s, n+m)\]
\[(r,n)\cdot (s, m)=(rs+ns+mr, nm)\]
 for $r,s\in A$ and $n, m\in \mathbb Z$ is a ring with the identity element $(0,1)$. Notice that $rn$ denotes the sum of $n$ copies of $r$.  The identity element is $(0, 1)$ and the embedding is $r\rightarrow (r,0)$.
 This can be verified by a direct calculation. 
 This pre-Lie ring will be denoted as $A^{identity}$.
 
{\bf Notation 3.}  Let $A$ be a pre-Lie ring $A$ and $A^{identity}$ be  its Dooroh extension. By $S_{A}$ we denote the associative ring whose elements are  maps from $A^{identity}$ to $A^{identity}$ and such that if $\alpha \in S_{A}$ then $\alpha (a+b)=\alpha (a)+\alpha (b)$ for $a,b\in A^{identity}$.  In this ring we have the following operations for $\alpha:A^{identity}\rightarrow A^{identity} , \beta: A^{identity}\rightarrow A^{identity} \in S_{A}$ and $x\in A^{identity}$:
\[(\alpha +\beta )(x)=\alpha (x)+\beta (x), (\alpha \circ  \beta)(x)=\alpha (\beta (x)).\]

Let $a\in A$. We denote by $L_{a}:A^{identity}\rightarrow A^{identity}$ the left multiplication by $a$, so 
 $L_{a}(a)=a\cdot b$ where $\cdot $ is the multilpication in the pre-Lie ring $A$. Then $L_{a}$ is a nilpotent map, and  it belongs  to the associative ring $S_{A}$. Let $S_{A}'$ be the subring of $S_{A}$ generated by maps $L_{a}$ for $a\in A$. Note that $S_{A}'$ is a nilpotent ring
 and any product of $n+1$ elements in this ring is zero, provided that $A$ has cardinality $p^{n}$ and $A$ is left nilpotent.

{\em Remark.} It is known (see for example \cite{Newman}, page 390 or \cite{K}) that for any associative nilpotent ring $R$  with $R^{n+1}=0$ for some $n+1<p$, and for every $x,y\in R$  
 we have $e^{x}e^{y}=e^{v}$ where $v$ is obtained by the BCH formula.

\[v=x+y-{\frac 12}[x,y]+{\frac {1}{12}}[y,x,x]-{\frac {1}{12}}[y,x,y]+\cdots,\]
 where $[y,x,x]=[[y,x],x]$ and $[y,x,y]=[[y,x], y]$.
 We will use notation $v=BCH(x,y)$.

 Take $R=S_{A}'$, $x=L_{a}, y=L_{b}$ as above. Then we obtain the following associative multiplication

\[e^{L_{a}}\circ e^{L_{b}}=e^{BCH(L_{a}, L_{b})}.\]

Notice that \[BCH(L_{a}, L_{b})=L_{BCH(a,b)}\]
 because $A$ is a pre-Lie ring.

It follows from the fact that $L_{a}+L_{b}=L_{a+b}$ and 
$[L_{a}, L_{b}](c)=L_{a}(L_{b}(c))-L_{b}(L_{a}(c))=L_{[a,b]}(c)$ for $a,b\in A$, $c\in A^{identity}$.
 Indeed, notice that
 $[L_{a}, L_{b}](c)=a\cdot (b\cdot c)-b\cdot(a\cdot c)$ 
 and 
$L_{[a,b]}(c)=(a\cdot b-b\cdot a)\cdot c$,
which agrees for pre-Lie rings.

$ $

{\em Remark.} (This is a known fact, see Lemma 3.43 \cite{Burde} for pre-Lie algebras over real numbers.)  If a pre-Lie ring is left nilpotent then the associated Lie ring is nilpotent, where 
the multiplication in the associated Lie ring $L(A)$ is $[a,b]=a\cdot b-b\cdot a$ where $\cdot $ is the multiplication in the pre-Lie ring (and the addition is the same).  

$ $

Sketch of the proof: Let $A$ be a left nilpotent pre-Lie ring. Recall that we showed before that  $L_{a}L_{b}-L_{b}L_{a}=L_{[a,b]}$ for $a,b\in A$, since $A$ is a pre-Lie ring.
 Consequently, 
 \[[[L_{a},L_{b}], L_{c}]=[L_{[a,b]}, L_{c}]=L_{[[a,b], c]}.\]

 Continuing on in this way, we  obtain that  $L_{l}$
 for any $l$ in the Lie ring $L(A)$, can be obtained by applying opertors $L_{a}$ for $a\in A$ from the left side, so it will always reach zero. This follows because $[x,y]\cdot z=x\cdot (y\cdot z)-y\cdot (x\cdot z)$ in a pre-Lie ring. We can then show by induction that $[a_{1}, \ldots , a_{i}]\cdot z\in A^{i+k}$ for $a_{1}, \ldots , a_{n}\in A$ and $z\in A^{k}$. Notice that, $[[a_{1}, \ldots a_{i+1}]=[[a_{1}, \ldots , a_{i}], a_{i+1}]\cdot z=[a_{1}, \ldots , a_{i}]\cdot (a_{i+1}\cdot z)-a_{i+1}\cdot ([a_{1}, \ldots , a_{i}]\cdot z)$, and the result follows by applying  by induction with  respect to $i$. We can apply it for $z=1$. This shows that $L(A)$ is a nilpotent Lie ring.

\begin{lemma}\label{8}
 Let $(A, +, \cdot )$ be a left nilpotent pre-Lie ring  of cardinality $p^{n}$ for some prime number $p\geq n+2$. 
   Define $W(a)=e^{L_{a}}(1)-1$ for $a\in A$, $1\in A^{identity}$. Then $W(a)=W(b)$ for some $a, b\in A$ implies $a=b$. Therefore, $W:A\rightarrow A$ is a bijective map.
 Moreover every element $a\in A$ can be presented as $a=e^{L_b}(1)-1$ for some $b\in A$.  
Consequently, $(A, \circ )$ is a group, where we define
\[(e^{L_a}(1)-1) \circ (e^{L_{b}}(1)-1)=e^{L_{BCH(a,b)}}(1)-1,\] for $a, b\in A$.
\end{lemma}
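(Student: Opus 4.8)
The plan is to reduce everything to the nilpotent associative ring $S_A'$ and the map $L\colon A\to S_A'$, $a\mapsto L_a$, which is additive and injective (injectivity of $L$ follows since $L_a(1)=a$ in $A^{identity}$). First I would record that $W$ factors as
\[
W(a)=e^{L_a}(1)-1,
\]
and that, expanding the exponential, $W(a)=a+\tfrac1{2!}a\cdot a+\tfrac1{3!}a\cdot(a\cdot a)+\cdots$, where the sum terminates because $A^{n+1}=0$ (as noted at the start of this section, left nilpotence together with $|A|=p^n$ forces $A^{n+1}=0$), and all denominators $2!,\dots,n!$ are units in $A$ since $p\ge n+2>n$. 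Thus $W$ is a well-defined map $A\to A$ of the form $W(a)=a+(\text{higher terms in }A^{2}+A^{3}+\cdots)$.

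For injectivity I would argue on the filtration $A\supseteq A^2\supseteq\cdots\supseteq A^{n+1}=0$. Suppose $W(a)=W(b)$. Working modulo $A^2$ gives $a\equiv b$; assuming inductively $a\equiv b \bmod A^{j}$, the difference $W(a)-W(b)$ lies in $A^{j}$ and its image in $A^{j}/A^{j+1}$ equals the image of $a-b$ (all higher exponential terms involving $a$ or $b$ at least twice, hence lying in $A^{j+1}$ once $a-b\in A^{j}$ and using bilinearity to separate the $a$- and $b$-contributions), so $a\equiv b\bmod A^{j+1}$. After $n$ steps $a=b$. Surjectivity is the same computation run in reverse: since $W=\mathrm{id}+(\text{strictly filtration-increasing map})$ on the finite set $A$, it is a bijection; equivalently one constructs the inverse $\Omega$ term by term using the filtration, exactly as the Bernoulli-number formula for $\Omega$ recalled in the previous section, with $p$ never entering a denominator.

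For the group statement, I would invoke the Remark preceding the lemma: $S_A'$ is an associative nilpotent ring with $(S_A')^{n+1}=0$ and $n+1<p$, so the BCH formula converges in $S_A'$ and $e^{L_a}\circ e^{L_b}=e^{\mathrm{BCH}(L_a,L_b)}$; and since $A$ is a pre-Lie ring, $\mathrm{BCH}(L_a,L_b)=L_{\mathrm{BCH}(a,b)}$ by the identity $L_{[a,b]}=[L_a,L_b]$ established earlier. Evaluating at $1\in A^{identity}$ and subtracting $1$ gives
\[
(e^{L_a}(1)-1)\circ(e^{L_b}(1)-1)=e^{L_{\mathrm{BCH}(a,b)}}(1)-1,
\]
i.e. $W(a)\circ W(b)=W(\mathrm{BCH}(a,b))$. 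Because $W$ is a bijection, this transports the group structure of $(\{e^{x}:x\in S_A'\},\circ)$ — whose associativity, identity $e^0=1$ and inverses $e^{-x}$ come for free from the associative exponential in $S_A'$ — to a group structure on $(A,\circ)$. I expect the only real work to be the filtration bookkeeping in the injectivity/bijectivity step; the group axioms are essentially formal once the BCH identity in $S_A'$ and the pre-Lie compatibility $\mathrm{BCH}(L_a,L_b)=L_{\mathrm{BCH}(a,b)}$ are in hand, both of which are supplied by the Remarks above.
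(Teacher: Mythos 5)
Your second half (the group structure) is fine and essentially the paper's own argument: BCH in the nilpotent associative ring $S_A'$, the identity $BCH(L_a,L_b)=L_{BCH(a,b)}$, evaluation at $1$, and bijectivity of $W$ to make the operation well defined and associative. The problem is the first half, the injectivity of $W$, which is exactly where the lemma is delicate.

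Your induction on the left filtration $A\supseteq A^{2}\supseteq\cdots$ has a genuine gap. Writing $c=a-b\in A^{j}$, the expansion of $W(a)-W(b)$ contains, besides $c$, terms such as $c\cdot b$, $c\cdot(b\cdot b)$ and $b\cdot(c\cdot b)$. Your claim that all such terms lie in $A^{j+1}$ would require containments like $A^{j}\cdot A\subseteq A^{j+1}$ (and $A^{j}\cdot A^{m}\subseteq A^{j+1}$), but the chain $A^{i}$, defined by $A^{i+1}=A\cdot A^{i}$, is only compatible with multiplication on the \emph{left}; it is not stable under right multiplication in a pre-Lie ring that is merely left nilpotent. (If $A$ were strongly nilpotent one could run your argument with the filtration $A^{[j]}$, but the whole point of this section of the paper is to treat left nilpotent rings that need not be right nilpotent, so "bilinearity to separate the $a$- and $b$-contributions" does not put $c\cdot b$ into $A^{j+1}$, and the step $a\equiv b \bmod A^{j}\Rightarrow a\equiv b\bmod A^{j+1}$ is unjustified.) The paper avoids this by a two-stage argument tailored to left nilpotence: first, from $e^{L_a}(1)=e^{L_b}(1)$ it applies $e^{-L_a}$ to get $1=e^{s}(1)$ with $s=L_{BCH(-a,b)}$, and then rewrites $s(1)=f(s)(1)$ where every term of $f(s)$ has $s$ acting at least twice \emph{from the left}; repeated substitution pushes $s(1)$ into $A^{i}$ for all $i$, so $BCH(-a,b)=0$ and hence $e^{L_a}=e^{L_b}$ as operators. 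Second, to pass from operator equality to $a=b$ it sets $c=b-a$, picks $i$ with $c\cdot A^{i}=0$, and for $x\in A^{i-1}$ uses $L_cL_a^{j}(x)=0$ to express $c\cdot x$ as an integer combination of $L_a^{j}(c\cdot x)$ with $j\geq 1$, again substituting repeatedly (only ever adding left factors) to force $c\cdot x=0$ and eventually $c=c\cdot 1=0$. You would need to replace your filtration induction by an argument of this kind (or otherwise prove that the needed terms vanish) before the injectivity, and hence the bijectivity and the transported group structure, is established. Your observation that injectivity plus finiteness gives surjectivity is fine and is also what the paper uses.
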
 
\begin{proof} Suppose $W(a)=W(b)$ for some $a,b\in A$, this implies $e^{L_a}(1)=e^{L_{b}}(1)$.
 By applying the map $e^{-L_{a}}$ to both sides we get $1=e^{L_{BCH(-a,b)}
}(1)$. Denote $s=L_{BCH(-a,b)}$ then $s\in S_{A}'$ and $s(1)+{\frac 12}s\cdot s(1)+\ldots =0$ (since $1=e^{s}(1)$ as above).
So $s(1)=f(s)(1)$ where $f(s)=-({\frac 12}s\cdot s+{\frac 16}s\cdot (s\cdot s)+ \cdots )$. Substituting (several times)  $f(s)(1)$ for $s(1)$ in the $s(1)$ at the end in each product appearing in the right side of the  equation $s(1)=f(s)(1)$ 
  we get that $s(1)\in A^{i}$ for every $i$, and since $A$ is a left nilpotent then $s(1)=0$.

Now, because $s(1)=0$ we get that 
$L_{BCH(-a,b)}=L_0$, hence 
 $BCH(L_{-a}, L_{b})=0$ (this follows because $A$ is a pre-Lie ring). This implies that 
$e^{L_{b}}$ is the inverse of $e^{L_{-a}}$ so 
$e^{L_{b}}=e^{L_{a}}$ in $S'_{A}$, so $e^{L_{b}}(x)=e^{L_{a}}(x)$ for all $x$ in $A^{identity}$.

We will now show that this implies that $a=b$.  Denote $c=b-a$. 
Note that  $A^{t}=0$ for some $t$ since $A$ is left nilpotent. Suppose that $c\neq 0$ and  let $i$ be such that $c\cdot A^{i}=0$.
 Let $x\in A^{i-1}$. Notice that 
$L_{c}L_{a}^{j}(x)\in c\cdot A^{i}=0$ for $j=1,2, \ldots $. Consequently,  
   \[e^{L_{a+c}}(x)=e^{L_{b}}(x)=e^{L_{a}}(x)\] for $x\in A^{identity}$  implies 
\[c\cdot x=L_{c}(x)\in \sum_{i=1,2, \ldots }{\mathbb Z}L_{a}^{j}L_{c}(x)=\sum_{i=1,2, \ldots }{\mathbb Z}L_{a}^{j}(c\cdot x).\]
 Now substituting this expression of $c\cdot x$ in the right hand side of the above expression we can increase the length of words appearing on the right hand side, hence we obtain $c\cdot x=0$, so $c=c\cdot 1=0$.

$ $

For the last part, notice that for $a,b\in A$ we have $L_{a}, L_{b}\in S_{A}'$
 hence it is known that
 \[e^{L_{a}}\circ e^{L_{b}}=e^{L_{BCH(a,b)}}.\]  We define  
\[(e^{L_a}(1)-1) \circ (e^{L_{b}}(1)-1)=e^{L_{BCH(a,b)}}(1)-1.\]

Note that knowing $e^{L_{a}}(1)-1$ we can recover $a $ and $L_{a}$ in the unique way, so this is a well defined multiplication. Indeed, note that by direct calculation we get that 
\[((e^{L_a}(1)-1) \circ (e^{L_{b}}(1)-1))\circ (e^{L_{c}}(1)-1)=(e^{L_a}(1)-1) \circ ((e^{L_{b}}(1)-1)\circ (e^{L_{c}}(1)-1)).\]
\end{proof}
  From Lemma \ref{8} we obtain: 
\begin{theorem}\label{99}
Let Notation be as in Lemma \ref{8}. Because $W:A\rightarrow A$ is a bijective function, then 
 there exists a function $\Omega :A\rightarrow A$ which is  the inverse function to the function $W$, so $\Omega (W(a))=W(\Omega (a))$ for all $a\in A$.
 Then $(A, +, \circ )$ is a brace, where $(A, \circ )$ is the group of flows of the pre-Lie ring $A$, so
\[a\circ b=a+e^{L_{\Omega (a)}}(b).\]
  Then $(A, +, \circ )$ is a left nilpotent brace.
\end{theorem}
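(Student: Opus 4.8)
The plan is to derive everything from Lemma~\ref{8} together with two short verifications. Lemma~\ref{8} already gives that $W$ is a bijection of $A$, so $\Omega=W^{-1}\colon A\rightarrow A$ exists with $\Omega(W(a))=W(\Omega(a))=a$, and that $(A,\circ)$ is a group under the operation transported from $S_{A}$ by $W$. What then remains is: to identify that group law with the formula $a\circ b=a+e^{L_{\Omega(a)}}(b)$, to verify the left brace axiom, and to prove left nilpotency.

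For the first point I would write $a=W(u)$, $b=W(v)$, so that $\Omega(a)=u$, and use that every element of $S_{A}$ — in particular $e^{L_{u}}$ — is an additive map. Then
\[a+e^{L_{\Omega(a)}}(b)=\big(e^{L_{u}}(1)-1\big)+e^{L_{u}}\!\big(e^{L_{v}}(1)\big)-e^{L_{u}}(1)=\big(e^{L_{u}}\circ e^{L_{v}}\big)(1)-1,\]
and since $e^{L_{u}}\circ e^{L_{v}}=e^{L_{BCH(u,v)}}$ (valid because $A$ is a pre-Lie ring and $L_{u},L_{v}\in S_{A}$, as in Notation~3 and Lemma~\ref{8}), this equals $e^{L_{BCH(u,v)}}(1)-1=W(u)\circ W(v)$. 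Hence $a\circ b=a+e^{L_{\Omega(a)}}(b)$ is exactly the group operation of Lemma~\ref{8}. The left brace identity is then the same one-line computation as in the previous section: as $e^{L_{\Omega(a)}}\in S_{A}$ is additive,
\[a\circ(b+c)+a=a+e^{L_{\Omega(a)}}(b+c)+a=\big(a+e^{L_{\Omega(a)}}(b)\big)+\big(a+e^{L_{\Omega(a)}}(c)\big)=a\circ b+a\circ c,\]
so $(A,+,\circ)$ is a left brace.

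For left nilpotency I would compute the star operation directly:
\[a*b=a\circ b-a-b=e^{L_{\Omega(a)}}(b)-b=\sum_{i\ge 1}\tfrac1{i!}\,L_{\Omega(a)}^{\,i}(b),\]
a finite sum of iterated left multiplications $\Omega(a)\cdot\big(\Omega(a)\cdot(\cdots\cdot b)\big)$, each containing at least one factor $\Omega(a)\in A$. Let $B^{1}=A$ and $B^{i+1}=A\cdot B^{i}$ be the pre-Lie left chain; it is decreasing, and since $A$ is left nilpotent of cardinality $p^{n}$ it stabilizes at $0$, so $B^{n+1}=0$. If $b\in B^{i}$, then $L_{\Omega(a)}^{\,j}(b)\in A\cdot B^{i+j-1}=B^{i+j}\subseteq B^{i+1}$ for every $j\ge 1$, whence $a*b\in B^{i+1}$; since $B^{i+1}$ is an additive subgroup, induction on $i$ gives $A^{i}\subseteq B^{i}$ for the brace radical chain $A^{1}=A$, $A^{i+1}=A*A^{i}$. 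Therefore $A^{n+1}\subseteq B^{n+1}=0$, and $(A,+,\circ)$ is left nilpotent.

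I do not expect a genuine obstacle here: the substantive content — bijectivity of $W$ and associativity of $\circ$ — is precisely Lemma~\ref{8}, which may be assumed. The only points needing care are that $e^{L_{\Omega(a)}}$ is an additive map, so that it distributes over $+$ (immediate from the definition of $S_{A}$ in Notation~3), and the bookkeeping that embeds the brace left chain $A^{i}$ into the pre-Lie left powers $B^{i}$, which is what carries left nilpotency from the pre-Lie ring across to the brace.
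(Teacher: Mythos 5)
Your proposal is correct, and its overall skeleton (take bijectivity of $W$ and associativity of $\circ$ from Lemma~\ref{8}, then check the left brace identity via additivity of $e^{L_{\Omega(a)}}$) matches the paper. Two points differ. First, you explicitly verify that the formula $a\circ b=a+e^{L_{\Omega(a)}}(b)$ coincides with the group law $(W(u),W(v))\mapsto W(BCH(u,v))$ of Lemma~\ref{8}, via $a+e^{L_{u}}(b)=\bigl(e^{L_{u}}\circ e^{L_{v}}\bigr)(1)-1$; the paper leaves this identification implicit, so your computation fills in a step the paper glosses over. Second, and more substantially, for left nilpotency the paper simply invokes Rump's theorem that every brace of cardinality $p^{n}$ is left nilpotent, whereas you prove it directly by expanding $a*b=\sum_{i\ge 1}\tfrac1{i!}L_{\Omega(a)}^{\,i}(b)$ and showing by induction that the brace chain sits inside the pre-Lie left chain, $A^{i}\subseteq B^{i}$, so $A^{n+1}\subseteq B^{n+1}=0$. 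Your route is longer but self-contained and yields the extra information that the brace left chain is dominated by the pre-Lie left chain, which is in the spirit of arguments the paper uses elsewhere (e.g.\ in the proof of Theorem~\ref{e}); the paper's route is shorter at the cost of an external citation. The only point to keep in mind in your argument is that the coefficients $\tfrac1{i!}$ must be read as multiplication by integer inverses of $i!$ modulo the exponent of $(A,+)$, which is legitimate precisely because $p>n+1$ and the sums stop by left nilpotency --- consistent with the paper's conventions, but worth stating.
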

\begin{proof}  Note that $(A, +, \circ )$ is a brace, since $(A, \circ )$ is a group, $(A, +)$ is an abelian group and  
\[a\circ (b+c)=a+e^{L_{\Omega (a)}}(b+c)= a\circ b+a\circ c-a.\]

 By a result of Rump from \cite{rump} every brace of cardinality $p^{n}$ is left nilpotent, hence $A$ is left nilpotent. 

\end{proof} 

\section{The  correspondence is one-to-one}

 In this section we consider strongly nilpotent braces and nilpotent nilpotent pre-Lie algebras, and we show that for them the correspondence is $1$-to-$1$.

We begin with the following lemma, which uses results from the previous section.

\begin{lemma}\label{x}
 Let $(A, +, \circ)$ be a left nilpotent pre-Lie ring  of cardinality $p^{n}$ for some prime $p>n+1$. 
 Let $\Omega:A\rightarrow A$ be  the inverse function of $W$ (so $W(\Omega (a))=a$) where $W(a)=e^{L_{a}}(1)-1$, where $1\in A^{identity}$.
  Then there are $\alpha _{w}\in \mathbb Z$ not depending on $a$ such that 
$\Omega (a)=a+\sum_{w }\alpha _{w}w(a)$ where $w$ are finite  non associative words in variable $x$ (of degree at least $2$), and $w(a)$ is a specialisation of $w$ at $a$ (so for example if $w=x\cdot (x\cdot x)$ then $w(a)=a\cdot (a\cdot a)$).
\end{lemma}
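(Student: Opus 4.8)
The plan is to prove the statement by analyzing the defining functional equation $W(\Omega(a)) = a$ and bootstrapping a power series inversion, using left nilpotency to guarantee the process terminates. First I would recall that $W(b) = e^{L_b}(1) - 1 = b + \frac{1}{2!}b\cdot b + \frac{1}{3!}b\cdot(b\cdot b) + \cdots$, which by left nilpotency of $A$ (so $A^{n+1}=0$, as noted at the start of the preceding section) is a genuine finite sum, and crucially $W(b) = b + (\text{terms of degree} \ge 2 \text{ in } b)$. Since $p > n+1$ all the factorials appearing are invertible modulo $p^n$, so these coefficients are well-defined integers modulo the additive order of the relevant element; more precisely, I would write $W(b) = b + \sum_{v} \beta_v v(b)$ where $v$ ranges over non-associative words of degree at least $2$ in a single variable and $\beta_v \in \mathbb{Z}$ (interpreting $\frac{1}{m!}$ as an integer mod $p^n$, legitimate since $A$ has exponent dividing $p^n$ and $\gcd(m!,p)=1$ for $m\le k\le n$).

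Next I would set up the inversion. Substituting the desired form $\Omega(a) = a + \sum_w \alpha_w w(a)$ into $W(\Omega(a)) = a$ and expanding, one gets $a + \sum_w \alpha_w w(a) + \sum_v \beta_v v\big(a + \sum_w \alpha_w w(a)\big) = a$. Cancelling $a$, this reads $\sum_w \alpha_w w(a) = -\sum_v \beta_v v\big(\Omega(a)\big)$, and the right-hand side, once $\Omega(a)$ is expanded, is a sum of non-associative words in $a$ each of degree at least $2$, whose coefficients depend only on the $\beta_v$ and the $\alpha_w$ of strictly smaller degree. This is the standard triangular recursion: $\alpha_w$ for a word $w$ of degree $d$ is determined by an explicit polynomial expression in the $\beta_v$ (for $|v|\le d$) and the $\alpha_{w'}$ for $|w'| < d$. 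One checks the base case $\alpha_x = 1$ (degree $1$) and reads off, e.g., the degree-$2$ coefficient giving the $-\frac12 a\cdot a$ term matching the formula quoted from \cite{M}. Since all coefficients $\beta_v$ are in $\mathbb{Z}$ (mod $p^n$) and the recursion only involves addition and multiplication of integers, each $\alpha_w$ lies in $\mathbb{Z}$ (or $\mathbb{Z}/p^n\mathbb{Z}$, which suffices as $A$ has exponent dividing $p^n$), and — this is the key point — it does not depend on $a$, only on the combinatorial word $w$.

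Termination and well-definedness come from left nilpotency exactly as in the proof of Lemma \ref{8}: any non-associative word $w$ of degree $\ge n+1$ evaluates to $0$ on any element of $A$ because such a word, after fixing a bracketing, is obtained by repeatedly left-multiplying, so it lies in $A^{n+1} = 0$ (more carefully, a non-associative word of degree $d$ evaluated on elements of $A$ lands in $A^{d}$, using $x\cdot A^{i}\subseteq A^{i+1}$ together with the pre-Lie identity rewriting to push left-multiplications outside, exactly the bootstrapping used in the preceding remarks). Hence the sum $\sum_w \alpha_w w(a)$ is finite on $A$, and the recursion only needs to be run finitely many times. Finally, since $W$ is a bijection by Lemma \ref{8}, the $\Omega$ so constructed is the unique inverse, so the formula with these integer coefficients genuinely represents $\Omega$.

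The main obstacle I anticipate is bookkeeping the combinatorics of non-associative words cleanly: when $\Omega(a)$ is substituted into $v(\,\cdot\,)$ for a non-associative word $v$, each variable slot of $v$ is replaced by the whole sum $a + \sum_w \alpha_w w(a)$, producing a large but finite sum of words whose degree grading must be tracked to see the triangularity. Making precise that "the coefficient of a word $w$ of degree $d$ on the right-hand side involves only $\alpha_{w'}$ with $|w'| < d$" requires a careful degree argument — $v$ has degree $\ge 2$, so substituting words of degree $\ge 1$ into its $\ge 2$ slots yields total degree strictly larger than the degree of any single inserted word — and this is really the heart of why the recursion is solvable. Everything else (integrality of coefficients from invertibility of factorials mod $p^n$, termination from $A^{n+1}=0$, uniqueness from Lemma \ref{8}) is then routine.
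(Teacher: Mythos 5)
There is a genuine gap, and it sits exactly at the point you call "termination". Your recursion for the coefficients $\alpha_w$ is fine as formal bookkeeping, but to turn it into a finite formula valid on $A$ you invoke the claim that every non-associative one-variable word of degree at least $n+1$ vanishes on $A$, justified by "a word of degree $d$ lands in $A^{d}$, using the pre-Lie identity to push left-multiplications outside". That claim is false in the setting of the lemma: the hypothesis is only \emph{left} nilpotency, and the lemma is needed (in the proof of Theorem \ref{e}) precisely for pre-Lie rings that are \emph{not} right nilpotent. The pre-Lie identity only rewrites commutators, $[x,y]\cdot z=x\cdot(y\cdot z)-y\cdot(x\cdot z)$; it gives no way to move a single right multiplication $(x\cdot y)\cdot z$ into $A^{3}$, and for a one-variable word such as $(a\cdot a)\cdot a$ the commutator is zero, so no such rewriting exists. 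If your degree argument were correct it would show $A^{(d)}\subseteq A^{d}$, i.e.\ that every left nilpotent pre-Lie ring is right nilpotent, contradicting the existence of left nilpotent, non-right-nilpotent examples (whose existence is the whole point of Theorem \ref{e}). Consequently the universal compositional inverse of $W$ is an infinite series whose high-degree words need not vanish on $A$, your sum $\sum_w\alpha_w w(a)$ has no reason to be finite, and truncating it does not obviously give $\Omega(a)$ on $A$.

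The paper avoids formal inversion altogether: since $A$ is finite and $W$ is injective (Lemma \ref{8}), for each $a$ some $W^{j-i}(a)=a$ with $j-i\leq p^{n}$, hence $W^{p^{n}!}(a)=a$ for all $a$, and therefore $\Omega=W^{p^{n}!-1}$. Since $W(a)=a+(\text{a finite universal integer-coefficient sum of words in }a)$ (the factorials being invertible mod $p^{n}$), a finite composition of $W$ with itself is again of this form, with coefficients independent of $a$ --- no vanishing of high-degree words is required. If you want to salvage your approach, you must either add a strong/right nilpotency hypothesis (which is not available here) or replace the termination step by an argument of this finiteness type.
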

\begin{proof}
 Let $W(a)=e^{L_{a}}(1)-1$, and let $W^{2}(a)=W(W(a
))$ and $W^{3}(a)=W(W(W(a))) $, continuing in this way we define $W^{i}(a)$ for every $i$.
Note, that because $A$ is a finite pre-Lie ring it follows that for a given $a$ there are $0<i<j\leq p^{n}+1$ such that 
 \[W^{i}(a)=W^{j}(a).\]
 By Lemma \ref{8} this implies
 $W^{j-i}(a)=a,$ and consequently 
\[W^{p^{n}!}(a)=a,\] since $j-i$ divides $p^{n}!$ (since $i-j\leq p^{n}$).
 Notice that it holds for every $a\in A$. Therefore, $\Omega (a)=W^{p^{n}!-1}(a)$, since $W(\Omega (a))=\Omega (W(a))=W^{p^{n}!}(a)= a$. This concludes the proof.  
\end{proof}

 We are now ready to state our main results.

\begin{theorem}\label{y}
 Let $(A, +, \cdot )$ be a  nilpotent pre-Lie ring of cardinality $p^{n}$  and nilpotency index $k$ where $ k, n+1<p$, where $p$ is a prime number.
Let $(A, +, \circ )$ be the brace associated to $A$ by using the group of flows, as in previous sections (so $(A, \circ )$ is the group of flows of pre-Lie ring $(A, +, \cdot )$). 
 Let $(A, +, \odot)$ be the pre-Lie ring associated to the brace $(A, +, \circ )$ as in Notation $2$. 
 Then the pre-Lie rings $(A, +, \cdot )$ and $(A, +, \odot )$ are the same, so for each $a, b\in A$, $a\odot b=a\cdot b$.
 %Moreover, if  $(A, +, \cdot )$ is not a right nilpotent pre-Lie ring then $(A, +, \circ )$ is %not a right nilpotent brace.
\end{theorem}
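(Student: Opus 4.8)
The plan is to show that the two constructions — passage from a pre-Lie ring to a brace via the group of flows, and passage back from that brace to a pre-Lie ring via the formula $a \odot b = -(1+p+\cdots+p^{n-1})\sum_{i=0}^{p-2}\xi^{p-1-i}((\xi^i a)*b)$ — are mutually inverse. The key point is that \emph{both} directions are given by universal formulas: by Proposition~\ref{2} (and its analogue for the group-of-flows construction, together with Lemma~\ref{x}), the operation $\odot$ is expressed as a universal $\mathbb{Z}$-linear combination of iterated $*$-products, and each $*$-product is in turn a universal $\mathbb{Z}$-linear combination of iterated $\cdot$-products, with all coefficients depending only on the arrangement of the letters and on the nilpotency index $k$, not on the particular ring $A$. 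Composing these two universal substitutions expresses $a \odot b$ as a universal $\mathbb{Z}$-linear (indeed, non-associative polynomial) combination of iterated $\cdot$-products of $a$ and $b$. It therefore suffices to prove the identity $a \odot b = a \cdot b$ for a \emph{single} test object from which the universal coefficients can be read off unambiguously, namely the free nilpotent pre-Lie ring on two generators of nilpotency index $k$, realized over $\mathbb{Z}_{(p)}$ or over $\mathbb{F}_p$ after checking no denominators divisible by $p$ intervene (guaranteed by $k, n+1 < p$ and the Bernoulli-number form of $\Omega$ recalled in Section~4).

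First I would set up the free object: let $F$ be the free nilpotent pre-Lie ring of index $k$ on generators $a, b$, with additive group a free $\mathbb{Z}$-module, and pass to $F \otimes \mathbb{Z}[1/(k!)]$ so that all the exponential/$\Omega$ series are defined. On $F$ I run the group-of-flows construction to obtain $\circ$, then apply the formula of Theorem~\ref{main} to $(F, +, \circ)$ to obtain a new multiplication, and finally rescale by $-(1+p+\cdots+p^{n-1})$ to get $\odot$. All of this is well-defined because the relevant sums truncate at length $k < p$. I would then compute $a \odot b$ explicitly to the first couple of orders and, more importantly, argue structurally rather than by brute force: by construction $a * b = L_{\Omega(a)}(b) + (\text{higher order in }L_{\Omega(a)}) \cdot b$, and since $\Omega(a) = a - \tfrac12 a\cdot a + \cdots$ and $e^{L_{\Omega(a)}}(b) = b + L_{\Omega(a)}(b) + \cdots$, the leading term of $a * b$ is $a \cdot b$, while all correction terms lie in $\sum_{i\geq 3} A^{[i]}$. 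Plugging into $\sum_i \xi^{p-1-i}((\xi^i a)*b)$: the leading term contributes $\sum_{i=0}^{p-2} \xi^{p-1-i}\cdot\xi^i (a\cdot b) = (p-1)(a\cdot b)$, and then the rescaling factor $-(1+p+\cdots+p^{n-1})$ turns $(p-1)(a\cdot b)$ into $(a\cdot b)$ modulo $p^n$ (since $-(p-1)(1+p+\cdots+p^{n-1}) = 1 - p^n \equiv 1$), which matches.

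The heart of the argument is therefore to show that \emph{all higher-order corrections cancel}. Here I would exploit the inversion already proved implicitly in the paper: the group-of-flows brace and the Theorem~\ref{main} pre-Lie ring are built from $\Omega$ and its functional inverse $W$, and the whole construction is engineered so that the BCH-to-pre-Lie dictionary ($BCH(L_a, L_b) = L_{BCH(a,b)}$, established in Section~5) makes the round trip the identity. Concretely, I would argue that the composite map "pre-Lie $\to$ brace $\to$ pre-Lie" is, by the universality discussed above, an operad endomorphism of the free nilpotent pre-Lie ring fixing the generators, and that such an endomorphism which agrees with the identity in leading order on the commutative-associative (Lie) shadow and respects the grading by $A^{[i]}$ must be the identity — one proves $a \odot b \equiv a \cdot b \pmod{A^{[i]}}$ by induction on $i$, at each stage using that the discrepancy lies in $A^{[i]}$ and is killed by the same upper-triangular-matrix / coprime-to-$p$ argument used in Proposition~\ref{12345} and Theorem~\ref{main}.

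I expect the main obstacle to be bookkeeping: making the "universal coefficients" claim fully rigorous requires carefully stating in what ring the free object lives (one must clear exactly the denominators $2, \ldots, k$ and no multiple of $p$, and one must check that the coefficient $-(1+\cdots+p^{n-1})$ interacts correctly with additive $p$-torsion so that the identity which holds over $\mathbb{Z}[1/k!]$ descends to every brace of order $p^n$), and then matching, term by term through the two substitutions, the contribution of length-$i$ words for general $i$. The conceptual content — that the two maps are inverse because they are both incarnations of the Lazard/BCH correspondence and the paper's earlier lemmas already pin down the coefficients uniquely — is straightforward; the risk is purely in the combinatorial control of the truncated series and the $p$-adic scaling factor.
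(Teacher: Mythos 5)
Your plan coincides with the paper's proof at its computational core: by Lemma \ref{x} (i.e.\ the group-of-flows construction) one has a universal expansion $a*b=a\cdot b+\sum_{w}\alpha_{w}w$, where each $w$ is a $\cdot$-word containing $a$ at least twice and $b$ exactly once at the end, with coefficients independent of $a,b$ and of the particular ring; substituting $\xi^{i}a$, weighting by $\xi^{p-1-i}$, summing over $i$ and rescaling by $-(1+p+\cdots+p^{n-1})$ gives the leading term $(1-p^{n})a\cdot b=a\cdot b$, exactly as you compute. The step you delegate to ``the same upper-triangular / coprime-to-$p$ argument'' is in fact simpler here and should be stated explicitly: since $\cdot$ is bi-additive, a word $w$ with $j\geq 2$ occurrences of $a$ satisfies $w_{\xi^{i}}=\xi^{ij}w$, and $\sum_{i=0}^{p-2}\xi^{p-1-i+ij}\equiv 0 \pmod{p^{n}}$ because $\xi^{p-1}\equiv 1 \pmod{p^{p}}$ forces $(\xi^{j-1}-1)\sum_{i=0}^{p-2}\xi^{p-1-i+ij}\equiv 0\pmod{p^{p}}$ while $\xi^{j-1}-1$ is prime to $p$ for $2\leq j<k$ by Lemma \ref{3}. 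No matrices, no free object and no induction on the filtration are needed in this direction (though your cited matrix argument would apply, the scaling matrix being diagonal), so the plan is salvageable.

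Two statements in your packaging would fail as written and must be replaced by the congruence formulation just described (which is what your actual computation does). First, it does not ``suffice to prove the identity $a\odot b=a\cdot b$ on the free nilpotent pre-Lie ring'': over $\mathbb{Z}_{(p)}$ or $\mathbb{Z}[1/k!]$ the identity is false there (the coefficient of $a\cdot b$ is $1-p^{n}$ and the higher coefficients are divisible by $p^{n}$ but nonzero), while verifying it over $\mathbb{F}_{p}$ only controls the universal coefficients modulo $p$, which is insufficient for rings of order $p^{n}$ with $n\geq 2$, whose additive groups contain elements of order $p^{n}$; the correct statement is that the universal coefficient of $a\cdot b$ is $\equiv 1$ and all other coefficients are $\equiv 0$ modulo $p^{n}$. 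Second, the structural principle you invoke --- an operation given by a universal expression, agreeing with $\cdot$ to leading order and respecting the filtration, must equal $\cdot$ --- is false (take $a\odot' b:=a\cdot b+a\cdot(a\cdot b)$), and the ``operad endomorphism fixing the generators'' phrasing is circular, since knowing the round trip is a pre-Lie homomorphism of the free object is equivalent to what is being proved. The entire burden rests on the coefficient cancellation, i.e.\ on the congruence $\sum_{i=0}^{p-2}\xi^{p-1-i+ij}\equiv 0\pmod{p^{n}}$ for $2\leq j<k$, not on any operadic or leading-order principle.
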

\begin{proof} The proof is similar to the proof of Proposition 7  in \cite{Lazard}. We repeat it for readers' convenience. 
Notice that, since $(A, \circ )$ is the group of flows of the pre-Lie ring $A$, then  \[a* b=a\cdot b +\sum _{w\in P_{a,b}} \alpha _{w}w\] where $\alpha _{w}$ are integers   and $P_{a,b}$ is the set of all products (under the operation $\cdot $) of elements $a$ and $b$ from $(A, \cdot )$ with $b$ appearing only at the end, and $a$ appearing at least two times in each product. Moreover, $\alpha _{w}$ does not depend on $a$ and $b$, but only on their arrangement in word $w$ as an element of  set $P_{a,b}$.
 This follows from the construction of $\Omega (a)$, by Lemma \ref{x}. 
Notice that each word $w$ will be a product of at most $k-1$ elements because pre-Lie ring $A$ has nilpotency index $k$.
 Let $w\in P_{a,b}$, then $w$ is a product of some elements $a$ and element $b$. We define the word $w_{\xi ^{i}}$ to be the  word obtained if at each place where $a$ appears in $w$ we write $\xi ^{i}a$ instead of $a$.
 It follows that:
 \[(\xi ^{i}a)* b=(\xi ^{i}a)\cdot b +\sum _{w\in P_{a, b}} \alpha _{w}w_{\xi ^{i}}.\] 
 Consequently,
 \[\sum_{i=0}^{p-2}\xi ^{p-1-i}((\xi ^{i}a)* b)=\sum_{i=0}^{p-2}\xi ^{p-1-i}[(\xi ^{i}a)\cdot b+\sum _{w\in P_{a,b}} \alpha _{w}w_{\xi ^{i}}].\]

Notice that \[\sum_{i=0}^{p-2}\xi ^{p-1-i}(\xi ^{i}a)\cdot b=(p-1)a\cdot b.\]
 Therefore, \[-(1+p+p^{2}+\ldots +p^{n-1})\sum_{i=0}^{p-2}\xi ^{p-1-i}(\xi ^{i}a)\cdot b=(1-p^{n})a\cdot b=a\cdot b.\]
 Therefore, it suffices to show that for every $w\in P_{a,b}$ we have \[\sum_{i=0}^{p-2}\xi ^{p-1-i}w_{\xi ^{i}}=0.\] We know that every pre-Lie algebra is distributive, hence $w_{\xi ^{i}}=(\xi ^{i})^{j}w$ where $j$ is the number of occurences of $a$ in the product which gives $w$. 
 It suffices to show that $\sum_{i=0}^{p-2}\xi ^{p-1-i}(\xi ^{ij})\equiv 0 \mod p^{n}$  for $j\geq 2$. Because $2\leq j<k$ this is true, since $ (\xi ^{j-1}-1)\sum_{i=0}^{p-2}\xi ^{p-1-i}{\xi ^{ji}}= (\xi ^{j-1})^{p-1}-1\equiv 0 \mod p^{p}$, which concludes the proof.

  Notice that by the formula for the multiplication $*$ in the group of flows, the nilpotency index in the constructed brace (as the group of flows) will be the same as the nilpotency index of the pre-Lie algebra $A$. 
\end{proof}

We now show that the correspondence is one-to-one.
\begin{theorem}\label{13} 
 Let $(A, +, \circ  )$ be a strongly nilpotent brace of nilpotency index $k$ and cardinality $p$,  where $k, n+1<p$, where $p$ is a prime number.
 Let $(A, +, \odot )$ be a nilpotent pre-Lie ring obtained from this brace as in Notation 2.
 Then $(A, \circ )$ is the group of flows of the pre-Lie ring $(A, +, \odot )$.   
\end{theorem}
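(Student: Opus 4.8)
The plan is to deduce the theorem from Theorem \ref{y} together with the injectivity of the brace-to-pre-Lie-ring map recorded in Proposition \ref{2}, by a short categorical argument. Write $F$ for the operation that sends a strongly nilpotent brace of cardinality $p^{n}$ and nilpotency index $k$ (with $k,n+1<p$) to the associated pre-Lie ring $(A,+,\odot)$ of Notation 2, and write $G$ for the operation that sends a nilpotent pre-Lie ring of cardinality $p^{n}$ and nilpotency index $k$ to the brace whose multiplication is the group of flows $a\circ b=a+e^{L_{\Omega(a)}}(b)$. Both $F$ and $G$ leave the underlying abelian group $(A,+)$ unchanged, and the assertion of the theorem is exactly that $G(F(A,+,\circ))=(A,+,\circ)$, i.e. that $G\circ F$ is the identity.

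First I would check that $F$ and $G$ really do map between the two classes \emph{strongly nilpotent braces of cardinality $p^{n}$ and nilpotency index $k$} and \emph{nilpotent pre-Lie rings of cardinality $p^{n}$ and nilpotency index $k$}, under the standing hypothesis $k,n+1<p$. For $F$ this is Theorem \ref{main} (pre-Lie axioms) together with Proposition \ref{2} (the resulting pre-Lie ring is nilpotent of the same nilpotency index $k$). For $G$: Theorem \ref{99}, via Lemma \ref{8}, gives that the group of flows of a left nilpotent pre-Lie ring is a left brace; since a nilpotent pre-Lie ring is in particular right nilpotent, Theorem \ref{e} shows this brace is right nilpotent, hence strongly nilpotent; and the closing remark in the proof of Theorem \ref{y} shows the nilpotency index is preserved. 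So both $F$ and $G$ stay inside the stated classes and all the index bounds propagate.

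Next, Theorem \ref{y} is precisely the statement $F\circ G=\mathrm{id}$: starting from a nilpotent pre-Lie ring $(A,+,\cdot)$, passing to the group of flows and then applying Notation 2 returns $(A,+,\cdot)$. Hence $F$ is surjective and $G$ is injective. On the other hand $F$ is injective as well: by Proposition \ref{2}, distinct strongly nilpotent braces produce distinct pre-Lie rings under the formula $a\cdot b=\sum_{i=0}^{p-2}\xi^{p-1-i}((\xi^{i}a)*b)$ of Theorem \ref{main}, and passing from that formula to Notation 2 only multiplies the product by $-(1+p+\cdots+p^{n-1})$, a scalar coprime to $p$ and therefore acting invertibly on the abelian $p$-group $(A,+)$; so distinct braces still produce distinct pre-Lie rings after rescaling, i.e. $F$ is injective. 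A map that is both injective and surjective is a bijection, so $F$ is invertible, and then $F\circ G=\mathrm{id}$ forces $G=F^{-1}$ and hence $G\circ F=\mathrm{id}$.

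Finally I would unwind $G\circ F=\mathrm{id}$ for the given brace $(A,+,\circ)$: here $F(A,+,\circ)=(A,+,\odot)$ is the pre-Lie ring of Notation 2, and by definition $G(A,+,\odot)$ is the brace whose multiplication is the group of flows of $(A,+,\odot)$; the equality $G(F(A,+,\circ))=(A,+,\circ)$ then says exactly that $(A,\circ)$ is the group of flows of $(A,+,\odot)$, which is the claim. I expect the only genuine work to be the bookkeeping of the second paragraph — verifying that the bounds $k<p$ and $n+1<p$ are inherited by every auxiliary brace and pre-Lie ring produced along the way, so that Theorems \ref{main}, \ref{e}, \ref{99}, \ref{y} and Proposition \ref{2} each apply — and the small arithmetic point that $-(1+p+\cdots+p^{n-1})$ is a unit modulo $p^{n}$, which is what upgrades "injective up to rescaling" in Proposition \ref{2} to genuine injectivity of $F$.
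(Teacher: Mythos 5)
Your proposal is correct and takes essentially the same route as the paper: the paper also proves Theorem \ref{13} by combining Theorem \ref{y} (applying Notation 2 to the group-of-flows brace of a nilpotent pre-Lie ring returns that pre-Lie ring) with injectivity of the Notation-2 passage, concluding that $(A,+,\circ)$ and the group-of-flows brace $(A,+,\circ')$ of $(A,+,\odot)$ map to the same pre-Lie ring and hence coincide. The only cosmetic differences are that the paper re-derives the injectivity inline, by recovering $*$ from $\odot$ through the relation $a\odot b=a*b+\sum_{w\in E_{a,b}}\alpha_{w}w$, whereas you import it from Proposition \ref{2} together with the observation that $-(1+p+\cdots+p^{n-1})$ is a unit modulo $p^{n}$, and you phrase the final step as a bijection argument rather than applying injectivity directly to the equality $F(A,+,\circ)=F(A,+,\circ')$.
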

\begin{proof} The proof is similar the proof of  Theorem 8 in \cite{Lazard}.
 Recall that  $\xi ^{p-1}-1$ is divisible by $p^{n}$). 
Let $E_{a,b}\subseteq A$  denote the set consisting of any product of elements $a$ and one element $b$ at the end of each product under the operation $*$,  in any order, with any distribution of brackets, each product consisting of at least 2 elements $a$.
 Observe that by Lemma \ref{fajny} applied several times 
\[a\odot  b=a* b +\sum_{w\in E_{a,b}}\alpha _{w}w\] where 
$\alpha _{w}$ are integers which  do not depend on $a,b$, but only on their arrangement in word $w$ as an element of $E_{a,b}$. Moreover, 
 each $w$ is a product of at least $3$ elements from the set $\{a, b\}$. Observe that coefficients $\alpha _{w}$ do not  depend on the brace $A$ as they were constructed using the formula from Lemma \ref{fajny} which holds in every strongly nilpotent brace. Therefore,  for any given $n$, the same formula will hold for all braces of nilpotency index not exceeding $n$.
 Therefore $a*b=a\odot b-\sum_{w\in E_{a,b}}\alpha _{w}w$, and now we can use this formula several times  to write every element from $E_{a,b}$ as a product of elements $a$ and $b$ under the operation $\odot $. Note that $A^{[i]}\odot A^{[j]}\subseteq A^{[i+j]}$.
 In this way we can recover the brace $(A,  +, \circ )$ from  the pre-Lie algebra $(A, +, \odot )$. 

Therefore two distinct strongly nilpotent braces cannot give the same pre-Lie algebra using the formula from Notation 2. 

 Let $(A', +, \circ ')$ be the brace obtained as in Theorem \ref{99} from pre-Lie algebra $(A, +, \odot )$. By Theorem \ref{y} pre-Lie algebra $(A, +, \odot )$ can be obtained as in Notation $2$ from the brace $(A', +, \circ ')$. It follows that 
 $(A,\circ)'$ is the group of flows of pre-Lie algebra $(A, +, \odot )$. 

Notice that, by applying construction from  Notation 2 to braces $(A, +, \circ )
$ and $(A, +, \circ ')$ we obtain pre-Lie algebra $(A, +, \odot )$. By the above braces 
 $(A, +, \circ )
$ and $(A, +, \circ ')$ are the same, so $(A, \circ )$ is the group of flows of pre-Lie algebra $(A, +, \odot )$. 
\end{proof} 

\begin{theorem} Let $k,n$ be natural numbers and let $k+1, n+2\leq p$, where $p$ is a prime number. 
 There is one-to-one correspondence between strongly nilpotent  braces of cardinality $p^{n}$ and nilpotency index $k$ (so $A^{[k]}\neq 0$ and $A^{[k+1]}=0$)
 and nilpotent pre-Lie rings  of cardinality $p^{n}$ and nilpotency index $k$. 
\end{theorem}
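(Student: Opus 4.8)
The plan is to realise the bijection as a pair of mutually inverse constructions already built above. Write $\Phi$ for the passage sending a strongly nilpotent brace $(A,+,\circ)$ of cardinality $p^{n}$ and nilpotency index $k$ to the pre-Lie ring $(A,+,\odot)$ of Notation~2, and $\Psi$ for the passage sending a nilpotent pre-Lie ring $(A,+,\cdot)$ of cardinality $p^{n}$ and nilpotency index $k$ to its group of flows $(A,+,\circ)$ with $a\circ b=a+e^{L_{\Omega(a)}}(b)$, as in Section~4. Under the hypothesis $k+1\le p$ and $n+2\le p$ (so $k<p$ and $n+1<p$), first I would check that both maps are well defined and land in the stated classes. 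For $\Phi$ this is Theorem~\ref{main} (the operation $\odot$ is pre-Lie) together with Proposition~\ref{2}: the underlying group $(A,+)$ is unchanged, so the cardinality stays $p^{n}$, and Proposition~\ref{2} shows the nilpotency index of $(A,+,\odot)$ is again $k$. For $\Psi$, Theorem~\ref{99} gives that $(A,+,\circ)$ is a left brace which is left nilpotent; since a nilpotent pre-Lie ring is in particular right nilpotent, Theorem~\ref{e} gives that $(A,+,\circ)$ is right nilpotent as well, hence strongly nilpotent by \cite{Engel}; the addition is unchanged, so the cardinality is $p^{n}$, and the final remark in the proof of Theorem~\ref{y} (the formula for the multiplication of the group of flows) shows its nilpotency index equals that of $(A,+,\cdot)$, namely $k$.

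Next I would verify that $\Phi$ and $\Psi$ are inverse to one another. The composite ``$\Psi$ after $\Phi$'' is the identity on strongly nilpotent braces by Theorem~\ref{13}: starting from $(A,+,\circ)$, the pre-Lie ring $(A,+,\odot)=\Phi(A)$ has the property that $(A,\circ)$ is precisely its group of flows, so $\Psi(\Phi(A))=(A,+,\circ)$; this uses that the brace operation $*$ can be rewritten in terms of $\odot$ via repeated application of Lemma~\ref{fajny} and that $\Phi$ is injective. The composite ``$\Phi$ after $\Psi$'' is the identity on nilpotent pre-Lie rings by Theorem~\ref{y}: starting from $(A,+,\cdot)$, passing to the group of flows $(A,+,\circ)=\Psi(A)$ and then forming the associated pre-Lie ring of Notation~2 returns $a\odot b=a\cdot b$. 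Hence $\Phi$ and $\Psi$ are inverse bijections between the two sets.

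Finally I would collect the bookkeeping on invariants: both $\Phi$ and $\Psi$ fix the underlying abelian group $(A,+)$, so they restrict to inverse bijections between braces and pre-Lie rings of cardinality $p^{n}$, and by the index computations recalled above they further restrict to inverse bijections between those of nilpotency index exactly $k$, for every admissible $k$ with $k+1\le p$. This is the asserted one-to-one correspondence. The step that I expect to require the most care is confirming that $\Psi$ neither loses nor gains nilpotency — that the group of flows of a nilpotent pre-Lie ring is genuinely strongly nilpotent and of the same index — but this is already supplied by Theorem~\ref{e} (for right nilpotency, hence strong nilpotency via \cite{Engel}) and by the last remark in the proof of Theorem~\ref{y} (for the index), so the present theorem is a repackaging of Theorems~\ref{main}, \ref{e}, \ref{y}, \ref{13} and Proposition~\ref{2}, with no genuinely new argument needed.
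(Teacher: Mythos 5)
Your proposal is correct and follows essentially the same route as the paper: the paper likewise builds the bijection by pairing each nilpotent pre-Lie ring with its group of flows, using Theorem \ref{13} to show every strongly nilpotent brace arises this way and Theorem \ref{y} (via the recovery formula of Notation 2) to show the pre-Lie ring is uniquely determined by the brace, which is exactly your statement that $\Phi$ and $\Psi$ are mutually inverse. Your extra bookkeeping on preservation of cardinality and nilpotency index (via Proposition \ref{2}, Theorem \ref{e}, and the remark in the proof of Theorem \ref{y}) is consistent with, and slightly more explicit than, what the paper records.
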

\begin{proof}
 The proof is similar to the proof in the case when $A$ is an $\mathbb F_{p}$-brace, as in \cite{Lazard}. 
 For every pre-Lie ring of nilpotency index $k$ we can attach the brace which is its group of flows and form a   pair.  Since the group of flows is uniquely defined,  every pre-Lie algebra will be in exactly one pair.  Moreover, every brace will be in some pair, by Theorem \ref{13}. 
 Observe that every brace will be in exactly one pair, as otherwise there would be two distinct pre-Lie algebras which give the same group of flows. However, by Theorem \ref{y} we can apply the formula from Notation $2$,
\[a\odot b=-(1+p+p^{2}+\ldots +p^{n-1})\sum_{i=0}^{p-2}\xi ^{p-1-i}((\xi ^{i}a)* b),\] to recover these pre-Lie algebras from this brace. Because the formula defines uniquely the underlying pre-Lie algebra  every brace is in at most one pair.   
\end{proof}

 Notice that the group of flows is related to Lazard's correspondence, since 
$W(a)\circ W(b)=W(BCH(a,b))$, therefore the above correspondence is related to Lazard's correspondence  and the obtained structures are isomorphic to the structures obtained using Lazard's correspondence.
 
\section{Some ideals in braces of cardinality $p^{n}$}
Let $A$ be a brace of cardinality $p^{n}$ with $p>n+1$ and $ann (p^{i})$ be the set of elements of additive order $p^{i}$ in this brace, $ann(p^{i})=\{a\in A: p^{i}a=0\}$ where $p^{i}a$ denotes the sum of $p^{i}$ copies of element $a$.

Let $A$ be a brace and $a\in A$, by $a^{\circ n}$ we mean the product of $n$ copies of $a$ in the multiplicative group of brace, so $a^{\circ n}=a\circ a\circ \cdots \circ a$.
 By $a^{n}$ we will mean the product of $n$ copies of $a$ under the operation $*$, so $a^{n}=a*a*\cdots *a$. Recall that in any brace $A$, we use the notation $a*b=a\circ b-a-b$. 
 Let $i$ be a natural number, by $A^{\circ ^{p^{i}}}$ we denote the subgroup of the group $(A, \circ )$ generated by elements $a^{\circ p^{i}}$ where $a\in A$.  

 Recall that $A^{1}=A$ and $A^{i+1}=A*A^{i}$. By Rump's result if $A$ is a brace of cardinality $p^{n}$, where $p$ is a prime number,  then $A^{n+1}=0$ \cite{Rump}.
\begin{proposition}\label{1} Let $i,n$ be natural numbers.
 Let $A$ be a brace of cardinality $p^{n}$ for some prime number $p>n+1$.
  Then,  $p^{i}A=\{p^{i}a:a\in A\}$ is an   ideal in $A$ for each $i$.
 Moreover \[A^{\circ p^{i}}=p^{i}A.\]
\end{proposition}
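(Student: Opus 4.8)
The plan is to prove the two assertions of Proposition \ref{1} in sequence: first that $p^{i}A$ is an ideal, then that $A^{\circ p^{i}} = p^{i}A$. For the first part, recall that a subset $I$ of a brace $A$ is an ideal if it is a normal subgroup of $(A,\circ)$ and satisfies $A * I \subseteq I$ (equivalently $a \circ x \circ a^{-1} - a \in I$ for $a \in A$, $x \in I$, together with $a * x \in I$). Since $(A,+)$ is abelian, $p^{i}A$ is trivially a subgroup of $(A,+)$. The key computation is to show $a * (p^{i}b) \in p^{i}A$ and $(p^{i}b) * a \in p^{i}A$ for all $a, b \in A$; the right identity $(x+y)*z = x*z + y*z + x*(y*z) - (x*y)*z + \cdots$ of Lemma \ref{fajny}, or more directly $p^{i}b * a = p^{i}(b*a)$, which holds because $*$ is left-distributive in the first variable (this follows from the brace axioms: $a*(b+c)=a*b+a*c$, and a symmetric-looking but more delicate identity for the other side — here I would use that $(A,+,*)$ being a left brace gives $a*(b+c) = a*b+a*c$ directly, so $a*(p^{i}b) = p^{i}(a*b) \in p^{i}A$). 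For $(p^{i}b)*a$ I would invoke Lemma \ref{fajny} or the fact that in a left brace of cardinality $p^n$ with $p > n+1$, one has the nilpotency $A^{n+1}=0$, so the correction terms in the expansion of $(p^{i}b)*a$ all lie in deep enough powers of $A$ and can be absorbed. It may be cleanest to cite the known fact (essentially Lemma \ref{fajny}) that $(\sum_j x)*z$ expands with the leading term $\sum_j (x*z)$ and correction terms that are sums of products containing $x$ at least twice, hence divisible by $p^{2i}$, so the whole thing is in $p^{i}A$.

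For normality under $(A,\circ)$, I would use that for any brace, $a \circ x \circ a^{-1} = a + \lambda_a(x) - \lambda_a(\ldots)$ type formulas; more concretely, the map $x \mapsto a*x$ is additive, and one checks $\lambda_a(x) := a + a*x$ is an automorphism of $(A,+)$, so it preserves $p^iA$; combined with $a*I\subseteq I$ this gives normality. Actually the slickest route: $p^i A$ is the image of the group endomorphism "multiplication by $p^i$" on $(A,+)$, and since every $\lambda_a$ is an additive automorphism commuting with multiplication by $p^i$, the set $p^iA$ is $\lambda$-invariant; that together with $A*(p^iA)\subseteq p^iA$ is precisely the definition of an ideal. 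I expect this part to be essentially routine once Lemma \ref{fajny} is in hand.

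The substantive part is $A^{\circ p^{i}} = p^{i}A$. The inclusion $A^{\circ p^i} \subseteq p^i A$ I would get by computing $a^{\circ p^i}$ in terms of $+$: inductively $a^{\circ m} = ma + \binom{m}{2}(a*a) + \cdots$, more precisely $a^{\circ m} - ma$ is a sum of products under $*$ of $m'\le m$ copies of $a$ with $m'\ge 2$, each such term appearing with an integer coefficient; when $m = p^i$ one checks that $p^i \mid \binom{p^i}{j}$ for $1\le j \le p^i$ is not quite enough on its own, but because $A^{n+1}=0$ only products of at most $n$ copies of $a$ survive, and for those $j \le n < p$ the binomial-type coefficients $\binom{p^i}{j}$ are divisible by $p^i$ (indeed by $p^i$ since $j<p$ means $j!$ is prime to $p$). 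Hence $a^{\circ p^i} \in p^i A$, and since $p^iA$ is a subgroup of $(A,+)=(A,\circ)$-closed-under... wait, one must be careful: $A^{\circ p^i}$ is generated inside $(A,\circ)$, not $(A,+)$, so I need $p^iA$ to be a subgroup of $(A,\circ)$ too — but that is exactly the ideal property just proved (an ideal is a subgroup of the multiplicative group). So $A^{\circ p^i}\subseteq p^iA$.

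For the reverse inclusion $p^i A \subseteq A^{\circ p^i}$, I would argue by a counting/order argument: both are ideals (hence subgroups of $(A,\circ)$), and I would show they have the same cardinality. The index of $A^{\circ p^i}$ in $(A,\circ)$ relates to the exponent structure; here I expect to use Bachiller's result, cited in the introduction, that for $n+1 < p$ the additive and multiplicative orders of every element of a brace of cardinality $p^n$ coincide. Consequently $|p^iA| = |A| / |\mathrm{ann}(p^i)|$ where $\mathrm{ann}(p^i)$ is the additive $p^i$-torsion, and $|A^{\circ p^i}| = |A|/|\{a : a^{\circ p^i} = 0\}|$; equality of additive and multiplicative orders forces $\mathrm{ann}(p^i)$ (additive) to equal $\{a : a^{\circ p^i}=0\}$ (multiplicative) as sets — actually only equal in size, which is all we need — hence $|p^iA| = |A^{\circ p^i}|$, and combined with $A^{\circ p^i}\subseteq p^iA$ we conclude equality. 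The main obstacle I anticipate is making the passage between $+$ and $\circ$ rigorous: specifically, justifying that the correction terms in $a^{\circ p^i} - p^i a$ really are divisible by $p^i$ using only $A^{n+1}=0$ and $p > n+1$, and pinning down exactly which form of Bachiller's equality-of-orders theorem is needed so that the cardinality count goes through cleanly rather than just giving one inclusion.
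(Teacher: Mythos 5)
Your proposal has two genuine gaps. First, the step $(p^{i}b)*a\in p^{i}A$: your justification is that the correction terms in the expansion of $(\sum x)*z$ contain $x$ at least twice and are ``hence divisible by $p^{2i}$''. This does not follow, because $*$ is additive only in its \emph{second} argument; you cannot pull a factor $p^{i}$ out of an occurrence of $p^{i}b$ sitting in a first-argument slot, and that is exactly the difficulty the proposition addresses. The correct source of divisibility is the binomial coefficients $\binom{p^{i}}{j}$ with $j\leq n<p$, and the paper gets at these not through additive multiples at all but through circle powers: Lemma 14 of \cite{note} gives the exact formulas $a^{\circ p^{i}}=\sum_{k}\binom{p^{i}}{k}a_{k}$ and $a^{\circ p^{i}}*b=\sum_{k}\binom{p^{i}}{k}e_{k}$, which together with Rump's $A^{n+1}=0$ yield $A^{\circ p^{i}}\subseteq p^{i}A$ and $g*b\in p^{i}A$ for $g\in A^{\circ p^{i}}$; the ideal property of $p^{i}A$ is then a \emph{consequence} of the equality $A^{\circ p^{i}}=p^{i}A$ (via $\lambda_{a\circ b}=\lambda_{a}\lambda_{b}$), not an ingredient proved beforehand. (Incidentally, the weaker fact that $p^{i}A$ is a subgroup of $(A,\circ)$ is easy without any of this: $x\circ y=x+y+x*(p^{i}v)\in p^{i}A$ for $y=p^{i}v$, and a finite subset closed under $\circ$ is a subgroup.)

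Second, your route to $p^{i}A\subseteq A^{\circ p^{i}}$ by counting relies on $|A^{\circ p^{i}}|=|A|/|\{a:a^{\circ p^{i}}=0\}|$, which is false for general finite $p$-groups: $(A,\circ)$ is non-abelian, $a\mapsto a^{\circ p^{i}}$ is not a homomorphism, and the set of $p^{i}$-th powers need not even be a subgroup. To make this count you would need the regularity of $(A,\circ)$ (available here since the class is at most $n-1<p$, by P.~Hall's theory) together with Bachiller's equality of orders -- considerably heavier external input, and you do not supply it. The paper avoids all of this with a short constructive telescoping argument: for $a\in A$ one has $(-a)^{\circ p^{i}}\circ(p^{i}a)=a_{2}\in p^{i}A^{2}$, then $(-a_{2})^{\circ p^{i}}\circ a_{2}=a_{3}\in p^{i}A^{3}$, and so on until $A^{n+1}=0$ kills the remainder, exhibiting $p^{i}a$ explicitly as a product of $p^{i}$-th circle powers. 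You should either adopt that argument or fully justify the regular-$p$-group counting; as written, both halves of your proof rest on steps that do not hold as stated.
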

\begin{proof} We need to show that $(p^{i}c)*b\in p^{i}A$, for all $b, c \in A$, as all other properties 
follow from the definition of a brace, since $c*(p^{i}b)=c*(b+b+\cdots +b)=p^{i}(c*b)$. 

  By Lemma 14 from \cite{note} we have that $a^{\circ p^{i}}*b=\sum_{k=1}^{p^{i}}{p^{i}\choose k}e_{k}$ where $e_{1}=a*b$, $e_{2}=a*(a*b)$, and $e_{k+1}=a*e_{k}$ for each $k$.

By Rump's result $A^{n+1}=0$ since $A$ is a brace of cardinality $p^{n}$, hence $e_{n+1}=0, e_{n+2}=0$.
Recall that $p>n+1$, it follows that 
$a^{\circ  p^{i}}*b\in p^{j}A$, since ${p^{i}\choose k}$ is divisible by $p^{i}$ for $k\leq p-1$. 
 
Let $A^{\circ p^{j}}$ be the subgroup of the group $(A, \circ )$ which is generated by elements $a^{\circ p^{j}}$ for $a\in A$.
 By Lemma 14 from \cite{note}, we have  $a^{\circ p^{i}}=\sum_{k=1}^{p^{i}}{p^{i}\choose k}a_{k}$ where $a_{1}=a$, $a_{2}=a*a$, and $a_{k+1}=a*a_{k}$ for each $k$. Similarly as above, it follows that 
 $A^{\circ p^{j}}\subseteq p^{i}A$. 
 Notice that, also by the above, if $g\in A^{\circ p^{j}}$ then $g*b\subseteq p^{j}A$ (this folows from the fact that
 $\lambda _{a}(\lambda _{b}(c))=\lambda _{a\circ b}(c)$ for $a,b,c\in A$ where
 $\lambda _{a}(c)=a*c+c=a\circ c-a$). 

It suffices to show that $p^{i}A\subseteq A^{\circ p^{i}}$. If $i=n$ then $a^{\circ p^{i}}=0$, since the group $(A, \circ )$ has $p^{n}$ elements, so $p^{n}A=0$, so the result holds.

 We proceed by induction on $j=n-i$. If $j=0$ then the result holds by the above. Supose it holds for some $i$ and we will show it holds for $i+1$. So we need to show that
$p^{i}A\subseteq A^{\circ p^{j}}$ provided that $p^{i+1}A\subseteq A^{\circ p^{i+1}}$. 

Let $a\in A$, we will show that $p^{i}a\in A^{\circ p^{i}}$. Observe first that $(-a)^{\circ p^{i}}\circ (p^{i}a)= a_{2}$ for some $a_{2}\subseteq p^{i}A^{2}$. Similarly,  
$(-a_{2})^{p^{i}}\circ a_{2}=a_{3}$ for some $a_{3}\subseteq p^{i}A^{3}$.
 Continuing in this way $(-a_{n})^{p^{i}}\circ \cdots \circ (-a_{2})^{p^{i}}\circ  (p^{i}a)=a_{n+1}\in A^{n+1}=0$ (the last equation follows from Rump's paper \cite{rump}).
 Therefore, $ (p^{i}a)=((-a)^{-1})^{p^{i}}\circ ((-a_{2})^{-1})^{p^{i}}\circ \cdots \circ ((-a_{n})^{-1})^{p^{i}}\in A^{\circ p^{i}}$. 
\end{proof}

\begin{corollary}
  Let $i,n$ be natural numbers.
 Let $A$ be a brace of cardinality $p^{n}$ for some prime number $p>n+1$. Let $I$ be an ideal in $A$. 
  Then,  $p^{i}I=\{p^{i}a:a\in I\}$ is an   ideal in $A$ for each $i$.
\end{corollary}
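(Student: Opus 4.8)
The plan is to reduce everything to Proposition \ref{1} applied to the subbrace $I$ itself, and then lift the conclusion from ``ideal of $I$'' to ``ideal of $A$'' by means of the identity $p^{i}I=I^{\circ p^{i}}$. First I would note that $I$, being an ideal of $A$, is a subgroup of $(A,+)$ and of $(A,\circ)$ closed under $*$, so $(I,+,\circ)$ is itself a left brace, and $|I|=p^{m}$ for some $m\le n$. Since $p>n+1\ge m+1$, Proposition \ref{1} applies verbatim to $I$, giving in particular that $p^{i}I=\{p^{i}a:a\in I\}$ coincides with $I^{\circ p^{i}}$, the subgroup of $(I,\circ)$ generated by the elements $a^{\circ p^{i}}$ with $a\in I$. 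It then remains to check that $p^{i}I$ satisfies the defining conditions of an ideal of $A$: that it is a subgroup of $(A,+)$, that it is invariant under every $\lambda_{x}$ with $x\in A$, and that it is a normal subgroup of $(A,\circ)$.

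The first two conditions are routine. The set $p^{i}I$ is an additive subgroup because $(I,+)$ is. For $\lambda$-invariance, the right distributivity in a brace gives, for $x\in A$ and $b\in I$, that $x*(p^{i}b)=p^{i}(x*b)$, so $\lambda_{x}(p^{i}b)=x*(p^{i}b)+p^{i}b=p^{i}(x*b+b)=p^{i}\lambda_{x}(b)$; since $I$ is an ideal of $A$ we have $x*b\in I$, hence $\lambda_{x}(b)\in I$ and $\lambda_{x}(p^{i}b)\in p^{i}I$. For normality in $(A,\circ)$ I would invoke $p^{i}I=I^{\circ p^{i}}$: the subgroup of $(I,\circ)$ generated by the $p^{i}$-th powers is a characteristic subgroup of $(I,\circ)$, since every automorphism of a group sends a $p^{i}$-th power to a $p^{i}$-th power and hence maps this generating set to itself; as $I$ is a normal subgroup of $(A,\circ)$, each $g\in A$ acts on $I$ by a group automorphism, so any characteristic subgroup of $I$ is normal in $(A,\circ)$. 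Thus $p^{i}I$ is normal in $(A,\circ)$, and combining the three verifications shows $p^{i}I$ is an ideal of $A$.

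The point that blocks a purely direct argument is that $*$ is additive only in its second variable, so $(p^{i}b)*a$ need not equal $p^{i}(b*a)$; this is exactly the subtlety already overcome in Proposition \ref{1}, and passing through the identity $p^{i}I=I^{\circ p^{i}}$ for the brace $I$ is precisely what lets us avoid redoing that computation here. The only thing to be careful about is that the hypothesis $p>n+1$ required by Proposition \ref{1} must be checked for $I$ in place of $A$, which is automatic since $|I|=p^{m}$ with $m\le n\le p-2$.
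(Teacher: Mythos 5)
Your proof is correct, and while it shares the paper's crucial first move (view the ideal $I$ as a brace of cardinality $p^{m}$, $m\le n$, and apply Proposition \ref{1} to it to get $p^{i}I=I^{\circ p^{i}}$), it finishes differently. The paper re-runs the binomial computation of Lemma 14 of \cite{note}: writing $a^{\circ p^{i}}*b=\sum_{k}\binom{p^{i}}{k}e_{k}$ with $e_{k}\in I$ and $e_{k}=0$ for $k>n$, it uses $p^{i}\mid\binom{p^{i}}{k}$ for $k\le p-1$ to conclude $(p^{i}I)*A=I^{\circ p^{i}}*A\subseteq p^{i}I$, which together with $a*(p^{i}b)=p^{i}(a*b)$ gives the ideal property via the $*$-closure characterization. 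You instead handle the delicate closure (the one blocked by the failure of additivity of $*$ in the first variable) purely group-theoretically: $I^{\circ p^{i}}$ is a verbal, hence characteristic, subgroup of $(I,\circ)$, and a characteristic subgroup of a normal subgroup of $(A,\circ)$ is normal in $(A,\circ)$; combined with your direct check of $\lambda$-invariance and the additive subgroup property, this gives the ideal conditions in their ``normal subgroup of $(A,\circ)$ plus $\lambda$-invariance'' form. Your route avoids repeating the binomial-coefficient estimate (and also sidesteps having to treat arbitrary products of generators of $I^{\circ p^{i}}$, which the paper handles via $\lambda_{a\circ b}=\lambda_{a}\lambda_{b}$), at the cost of invoking the standard equivalence between the two formulations of ``ideal''; the paper's route is more computational but yields the containment $(p^{i}I)*A\subseteq p^{i}I$ explicitly. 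Your hypothesis check ($p>n+1\ge m+1$ for the subbrace $I$) is the right point to flag and is handled correctly.
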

\begin{proof} Notice that $I$ is a brace, since it is an ideal. 
 By Proposition \ref{1}, applied to brace $I$ we get \[I^{\circ p^{i}}=p^{i}I.\]
 Therefore, $(p^{i}I)*A=I^{\circ p^{i}}*A\subseteq p^{i}I$. The last inclusion follows from the fact that 
  $a^{\circ p^{i}}*b=\sum_{k=1}^{p^{i}}{p^{i}\choose k}e_{k}$ where $e_{1}=a*b$, $e_{2}=a*(a*b)$, and $e_{k+1}=a*e_{k}$ for each $k$ (By Lemma 14 from \cite{note}). Moreover $e_{n+i}=0$ for $i=1,2 , \ldots $ since $I^{n+1}=0$, so ${p^{i}\choose k}=p^{i}\cdot k^{-1}{{p^{i}-1}\choose {k-1}}$ is divisible by $p^{i}$ for $k\leq p-1$. 
 Notice also that $a*(p^{i}b)=p^{i}a*b$ by the brace relation $a*(b+c)=a*b+a*c$. 
\end{proof}
\begin{lemma}\label{2} Let $n, i$ be a natural numbers. 
 Let $A$ be a brace  of cardinality $p^{n}$ for some prime number $p>n+1$. Let $ann (p^{i})=\{a\in A: p^{i}a=0\}$.
  Then,  $ann(p^{i})$ is an  ideal in $A$.
\end{lemma}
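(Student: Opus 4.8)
The plan is to show that $\mathrm{ann}(p^{i})$ is closed under addition, is a subgroup of $(A,\circ)$, and absorbs the $*$-multiplication on both sides; these together are exactly what is needed for it to be an ideal of the brace $A$. Closure under $+$ is immediate: if $p^{i}a=p^{i}b=0$ then $p^{i}(a-b)=p^{i}a-p^{i}b=0$ since $(A,+)$ is abelian, so $\mathrm{ann}(p^{i})$ is a subgroup of $(A,+)$. For the left $*$-absorption, note that for $a\in \mathrm{ann}(p^i)$ and $b\in A$ the brace relation $a*(b+c)=a*b+a*c$ gives $p^{i}(a*b)=a*(p^{i}b)$, which is not obviously zero; so instead I would use the other distributivity. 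The cleanest route is to invoke Proposition \ref{1}: since $A$ has cardinality $p^{n}$, we have $p^{n}A=0$, hence $a\in \mathrm{ann}(p^i)$ iff $a\in \mathrm{ann}(p^{i})$, and more usefully $\mathrm{ann}(p^{i})$ can be related to the chain $p^{j}A$.

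Concretely, first I would observe that $\mathrm{ann}(p^{i})\supseteq p^{n-i}A$, and in fact by Proposition \ref{1} (applied with the roles reversed) one gets $|p^{n-i}A|\cdot|\mathrm{ann}(p^{i})|$ is controlled; but the direct structural argument is better. For the right absorption, take $a\in A$, $b\in \mathrm{ann}(p^{i})$; I want $a*b\in \mathrm{ann}(p^{i})$. Using $a*(p^{i}b)=a*0=0$ together with the brace relation $a*(b+c)=a*b+a*c$ repeatedly, $0=a*(p^{i}b)=p^{i}(a*b)$, so $a*b\in\mathrm{ann}(p^{i})$ — the right absorption is actually the easy one. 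For the left absorption, I would use Lemma 14 from \cite{note} exactly as in the proof of Proposition \ref{1}: write $b*a$ in terms of the $e_k$'s, or better, note $(p^{i}c)*b$; but the genuinely needed fact is that for $c\in\mathrm{ann}(p^i)$, the element $c*b$ lies in $\mathrm{ann}(p^i)$. Since $\mathrm{ann}(p^i)$ is a subgroup of $(A,+)$ of exponent dividing $p^i$, and using that in a brace of cardinality $p^n$ with $p>n+1$ the map $c\mapsto c^{\circ p^i}$ has image $p^iA$ (Proposition \ref{1}), one deduces that $\mathrm{ann}(p^i)=\mathrm{ann}(\,\cdot\, p^i)$ is precisely a term matching $p^{?}A$ up to the duality $|p^jA|\cdot|\mathrm{ann}(p^j)|=p^n$; since each $p^jA$ is an ideal, so is each $\mathrm{ann}(p^j)$ by a counting/complementation argument.

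The main obstacle I anticipate is the left absorption $\mathrm{ann}(p^{i})*A\subseteq\mathrm{ann}(p^{i})$: unlike the right side, it does not follow from the bilinearity relations alone, because $p^{i}(c*b)=c*(p^{i}b)$ only handles the second variable. To handle it I would argue as follows: by Proposition \ref{1}, $p^{j}A=A^{\circ p^{j}}$ is an ideal for every $j$, and the chain $0=p^{n}A\subseteq p^{n-1}A\subseteq\cdots\subseteq p^{0}A=A$ has successive quotients which are elementary abelian. Now $\mathrm{ann}(p^{i})$ is exactly the preimage structure dual to $p^{i}A$; more precisely, multiplication by $p^{i}$ induces a surjection $A\to p^{i}A$ with kernel $\mathrm{ann}(p^{i})$, so $\mathrm{ann}(p^{i})$ is the kernel of a brace homomorphism (multiplication by $p^{i}$ is a brace endomorphism since $p^{i}(a\circ b)=p^{i}a\circ\cdots$? — this needs the identity $p^{i}(a*b)$ behaves well, which again uses Lemma 14 of \cite{note} and $p>n+1$ so that the binomial coefficients $\binom{p^i}{k}$ for $k\le p-1$ are divisible by $p^i$ while $e_{n+1}=0$). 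Once multiplication by $p^i$ is shown to be a brace endomorphism of $A$, its kernel $\mathrm{ann}(p^{i})$ is automatically an ideal, completing the proof.
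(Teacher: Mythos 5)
Your reduction of the problem is on target: closure under $+$, and the inclusion $A*\mathrm{ann}(p^{i})\subseteq \mathrm{ann}(p^{i})$ via $a*(p^{i}b)=p^{i}(a*b)$, match the paper, and you correctly identify the left absorption $\mathrm{ann}(p^{i})*A\subseteq \mathrm{ann}(p^{i})$ as the crux. But neither of the two routes you offer for that crux works. The counting/``duality'' suggestion (that $|p^{j}A|\cdot|\mathrm{ann}(p^{j})|=p^{n}$ and ideality of $p^{j}A$ should transfer to $\mathrm{ann}(p^{j})$) is not an argument: a subgroup of the right cardinality need not be an ideal, and nothing in that complementation ties the brace structure to $\mathrm{ann}(p^{i})$. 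Your main route --- that $a\mapsto p^{i}a$ is a brace endomorphism, so its kernel $\mathrm{ann}(p^{i})$ is automatically an ideal --- is false in general: it would require $p^{i}(a*b)=(p^{i}a)*(p^{i}b)=p^{i}\bigl((p^{i}a)*b\bigr)$, and $*$ is not additive in the left variable. Concretely, take $A=p\mathbb{Z}/p^{4}\mathbb{Z}$ with $a\circ b=a+b+ab$ (a nilpotent radical ring, hence a brace of cardinality $p^{3}$, and $p\geq 5>n+1$): for $a=b=p$ one gets $p(a*b)=p^{3}\neq 0=(pa)*(pb)$, so multiplication by $p$ is not a $\circ$-homomorphism, even though the lemma itself holds there. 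So the proposal has a genuine gap at exactly the step it needed most.

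For comparison, the paper's proof of the hard inclusion goes as follows: for $a\in\mathrm{ann}(p^{i})$ it first shows $a^{\circ p^{i}}=0$, using Lemma 14 of \cite{note} ($a^{\circ p^{i}}=\sum_{k}\binom{p^{i}}{k}a_{k}$ with $a_{1}=a$, $a_{k+1}=a*a_{k}$), the divisibility of $\binom{p^{i}}{k}$ by $p^{i}$ for $k\leq p-1$, the fact that each $a_{k}\in\mathrm{ann}(p^{i})$ by right-variable linearity, and $A^{n+1}=0$. Then expanding $0=a^{\circ p^{i}}*b=\sum_{k}\binom{p^{i}}{k}e_{k}$, with $e_{1}=a*b$ and $e_{k+1}=a*e_{k}$, yields $p^{i}e_{1}=-\sum_{k\geq 2}\binom{p^{i}}{k}e_{k}$; substituting this expression for $p^{i}e_{1}$ back into the right-hand side repeatedly pushes everything into $A^{n+1}=0$, whence $p^{i}(a*b)=0$. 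This iterated-substitution argument is the one you gestured at when invoking Lemma 14 ``exactly as in the proof of Proposition \ref{1}'', but you did not carry it out, and the endomorphism shortcut you substituted for it does not hold.
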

\begin{proof}
 Notice that $a*(p^{i}b)=p^{i}(a*b)$. Therefore,  if $a\in  A$ and $b\in ann (p^{i})$ then $a*b\in ann (p^{i})$. 
   To show that $ann(p^{i})$ is an ideal it suffices to show that $a*b\in ann(p^{i})$ provided that $a\in ann(p^{i})$, $b\in A$. 
  Note that if $a\in ann(p^{i})$ then $a^{\circ p^{i}}=0$ by the formula from Lemma $14$ in \cite{note}, since  
$a^{p^{i}\circ }=\sum_{k=1}^{p-1}{p^{i}\choose k}a_{k}$ where $a_{1}=a$, $a_{2}=a*a$, and $a_{k+1}=a*a_{k}$ for each $i$.
 It follows because $a_{n+1}\in A^{n+1}=0$ by Rump's Theorem, and  ${p^{i}\choose k}$ is divisible by $p^{i}$ for $k\leq p-1$. 

Recall also the formula 
$a^{\circ p^{i}}*b=\sum_{k=1}^{p-1} {p^{i}\choose  k}e_{k}$ where $e_{1}=a*b$, $e_{2}=a*(a*b)$, and $e_{k+1}=a*e_{k}$ for each $k$. 
 Therefore, for $a\in ann(p^{i})$,  $a^{\circ p^{i}}*b=0*b=0$, hence \[p^{i}e_{1}=-\sum_{k=2}^{p-1}{p^{i}\choose k}e_{k}\] since $A^{n+1}=0$ and  $e_{i}\in A^{i}$.  Notice that $p^{i}e_{2}=a*(a*(p^{i} a*b))=a*(a*(p^{i}e_{1}))$. Similarly $p^{i}e_{k}=a*(a*(\cdots a*(p^{i}e_{1})\cdots ))$.  
 Note that ${p^{i}\choose k}$ is divisible by $p^{k}$ for $k\leq p-1$, hence we can substitute the expression for $pe_{1}$ in the expressions for $ {{p^{i}} \choose k}e_{k}$ in the right-hand side  of the above equation. 
 Applying it several times we get $pe_{1}\in A^{n+1}=0$. Therefore, $pa=0$ implies $p(a*b)=0$ for each $b\in A$, hence $ann(p)$ is an ideal in brace $A$. 
\end{proof}

\section{Applications of brace theory for pre-Lie algebras}
 In this section we translate some results from brace theory to pre-Lie algebras with almost identical proofs.  Whilst several colleagues at conferences have suggested that these results may be new  for pre-Lie algebras, we would not be suprised if they were  known as  `common knowledge' to experts working in pre-Lie algebras.

 Recall that a pre-Lie ring is left nilpotent if \[A\cdot( A\cdot (A\cdots A))=0\] for some number of copies of $A$. We denote $A\cdot( A\cdot (A\cdots A))$ with $n$ ocurences of $A$ by $A^{n}$. We denote $((A\cdots A)\cdot A)\cdot A$, with $n$ occurences of $A$ by $A^{(n)}$. A pre-Lie ring $A$ is right nilpotent if $A^{(n)}=0$ for some $n$. 
  %For pre-Lie  rings  left and right nilpotency is defined in an analogous way.  By a result of Rump \cite{rump}, all braces of cardinality $p^{n}$ for a prime number $p$ are left nilpotent.

We say that an ideal $I$ in a pre-Lie ring $(A, +, \cdot )$ is solvable if $I_{n}=0$ for some $n$, where $I_{1}=I$ and $I_{i+1}=I_{i}\cdot I_{i}$ where $I_{i}\cdot I_{i}$ consists of  sums of elements $a\cdot b$ where $a,b\in I_{i}$. 

$ $

 Recall that if $I$ is an ideal in a pre-Lie ring $A$ then $I\cdot A$ is the additive subgroup of $A$ generated by elements $i\cdot a$ where $a\in A, i\in I$. 

$ $

  A result  of Rump in \cite{rump} assures that if $I$ is an ideal in a brace $A$ then $I*A$ is also an ideal in $A$, and that $A^{(n)}$ is an ideal in a left brace $A$. This can be generalised for pre-Lie rings as follows:

\begin{proposition}
 Let $(A, +, \cdot )$ be a  pre-Lie ring and $I$ be an ideal in $A$ then $I\cdot A$ is also an ideal in $A$. In particular $A^{(n)}$ is an ideal in $A$.
\end{proposition}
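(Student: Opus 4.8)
The plan is to mimic the proof of the corresponding statement for braces, using the fact that in a pre-Lie ring the left and right multiplications are derivation-like enough to push the ideal condition through. First I would unpack what needs to be checked: $I\cdot A$ is by definition an additive subgroup, so it remains to verify that it is closed under left and right multiplication by arbitrary elements of $A$, i.e. that $A\cdot(I\cdot A)\subseteq I\cdot A$ and $(I\cdot A)\cdot A\subseteq I\cdot A$. The second inclusion is immediate from the definition of $I\cdot A$ as the additive span of products $i\cdot a$ with $i\in I$, $a\in A$, since $(i\cdot a)\cdot a'$ is again of that form.

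The real content is the first inclusion, $A\cdot(I\cdot A)\subseteq I\cdot A$. For this I would invoke the left pre-Lie identity $(x\cdot y)\cdot z - x\cdot(y\cdot z) = (y\cdot x)\cdot z - y\cdot(x\cdot z)$, rewritten as
\[
x\cdot(i\cdot a) = (x\cdot i)\cdot a - (i\cdot x)\cdot a + i\cdot(x\cdot a).
\]
Now each term on the right lies in $I\cdot A$: the terms $(x\cdot i)\cdot a$ and $(i\cdot x)\cdot a$ because $x\cdot i, i\cdot x\in I$ (as $I$ is an ideal, so closed under both left and right multiplication by $A$), hence they are of the form $i'\cdot a$; and $i\cdot(x\cdot a)$ because $x\cdot a\in A$, so it is of the form $i\cdot a'$. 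By additivity this extends from generators $i\cdot a$ to all of $I\cdot A$, giving $A\cdot(I\cdot A)\subseteq I\cdot A$. Together with the trivial right inclusion, this shows $I\cdot A$ is an ideal.

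For the final sentence, I would apply this inductively: $A$ itself is an ideal of $A$, so $A^{(2)} = A\cdot A$ is an ideal; and if $A^{(n)}$ is an ideal, then $A^{(n+1)} = A^{(n)}\cdot A$ is an ideal by the same result (taking $I = A^{(n)}$). Hence $A^{(n)}$ is an ideal in $A$ for every $n$. I do not anticipate a genuine obstacle here — the only point requiring a little care is that the definition of "ideal" for a pre-Lie ring must include closure under both left and right multiplication (so that $x\cdot i$ and $i\cdot x$ both lie in $I$), which is exactly what is used when rewriting $x\cdot(i\cdot a)$ via the pre-Lie identity; once that is in place the computation is routine.
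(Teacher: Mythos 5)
Your proof is correct and follows essentially the same route as the paper: the key step in both is to rearrange the pre-Lie identity to write $a\cdot(b\cdot c)=(a\cdot b)\cdot c+b\cdot(a\cdot c)-(b\cdot a)\cdot c$ with $b\in I$ and observe that each summand lies in $I\cdot A$, the right inclusion $(I\cdot A)\cdot A\subseteq I\cdot A$ being handled via $I\cdot A\subseteq I$. Your explicit remark that "ideal" must mean closure under both left and right multiplication, and the induction for $A^{(n)}$, just make explicit what the paper leaves implicit.
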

 
\begin{proof} Observe that $I\cdot A\subseteq I$ since $I$ is an ideal in $A$, and hence $(I\cdot A)\cdot A\subseteq I\cdot A$. To show that $I$ is an ideal in $A$ it suffices to show that 
$A\cdot (I\cdot A)\subseteq I\cdot A $. Let $a, c\in A$, $b\in I$ then we have by the pre-Lie ring relation:
\[(a\cdot  b)\cdot c- a\cdot (b\cdot c)=(b\cdot a)\cdot c- b\cdot (a\cdot c).\]
Therefore $a\cdot (b\cdot c)=(a\cdot b)\cdot c+b\cdot (a\cdot c)-(b\cdot a)\cdot c\subseteq I\cdot A$. 
It follows that $A\cdot (I\cdot A)\subseteq I\cdot A$. 
\end{proof}

 We can defined the socle of a pre-Lie ring $A$ as 
\[Soc (A)=\{ a\in A: a\cdot A=0\}.\]
 Rump \cite{Rump} defined a socle of a brace and showed that the socle of a brace is always an ideal in this brace.
 We show below that a similar result holds for pre-Lie rings.

 \begin{proposition} 
 Let $(A, +, \cdot )$ be a pre-Lie ring, then the socle of $A$ is an ideal in $A$.
\end{proposition}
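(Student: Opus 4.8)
The plan is to show that $\mathrm{Soc}(A)=\{a\in A: a\cdot A=0\}$ is closed under addition (which is immediate from left distributivity) and that it is stable under both left and right multiplication by arbitrary elements of $A$. Closure under addition: if $a\cdot A=0$ and $a'\cdot A=0$ then $(a+a')\cdot c=a\cdot c+a'\cdot c=0$ for all $c$, so $a+a'\in\mathrm{Soc}(A)$, and similarly $-a\in\mathrm{Soc}(A)$; thus $\mathrm{Soc}(A)$ is an additive subgroup.

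Next I would check the two ideal conditions. First, $\mathrm{Soc}(A)\cdot A\subseteq\mathrm{Soc}(A)$: but in fact $a\in\mathrm{Soc}(A)$ means $a\cdot A=0$, so trivially $a\cdot b=0\in\mathrm{Soc}(A)$ for every $b\in A$, and more to the point $\mathrm{Soc}(A)\cdot A=0$. The substantive point is the other inclusion, $A\cdot\mathrm{Soc}(A)\subseteq\mathrm{Soc}(A)$: given $a\in A$ and $b\in\mathrm{Soc}(A)$, I must show $(a\cdot b)\cdot c=0$ for every $c\in A$. Here I would invoke the defining pre-Lie identity
\[(a\cdot b)\cdot c-a\cdot(b\cdot c)=(b\cdot a)\cdot c-b\cdot(a\cdot c),\]
and rearrange to get $(a\cdot b)\cdot c=a\cdot(b\cdot c)+(b\cdot a)\cdot c-b\cdot(a\cdot c)$. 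Since $b\in\mathrm{Soc}(A)$ we have $b\cdot c=0$, so $a\cdot(b\cdot c)=0$; and $b\cdot a=0$ as well, so $(b\cdot a)\cdot c=0$; finally $a\cdot c\in A$, so $b\cdot(a\cdot c)=0$ since $b\cdot A=0$. Hence all three terms on the right vanish and $(a\cdot b)\cdot c=0$, giving $a\cdot b\in\mathrm{Soc}(A)$.

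This establishes that $\mathrm{Soc}(A)$ is an additive subgroup satisfying $\mathrm{Soc}(A)\cdot A\subseteq\mathrm{Soc}(A)$ and $A\cdot\mathrm{Soc}(A)\subseteq\mathrm{Soc}(A)$, i.e. a two-sided ideal of the pre-Lie ring $(A,+,\cdot)$, which is exactly the assertion. There is essentially no obstacle: the only place where anything beyond bookkeeping is used is the pre-Lie identity in the step $A\cdot\mathrm{Soc}(A)\subseteq\mathrm{Soc}(A)$, and even there the argument is a one-line manipulation exploiting that $b$ annihilates $A$ on the left. The proof mirrors Rump's argument that the socle of a brace is an ideal, with the brace operation $*$ replaced by $\cdot$ and the brace relation replaced by the left pre-Lie identity.
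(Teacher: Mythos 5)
Your proof is correct and takes essentially the same route as the paper: the substantive inclusion $A\cdot\mathrm{Soc}(A)\subseteq\mathrm{Soc}(A)$ is obtained from the pre-Lie identity by observing that the three terms involving the socle element on the left all vanish, forcing the fourth to vanish as well (you have merely swapped the names of $a$ and $b$, and added the routine check of closure under addition, which the paper omits).
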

\begin{proof}
 Let $a\in Soc(A)$ then $a\cdot b=0$ for $b\in A$, so it remains to show that $b\cdot a\in Soc(A)$. 
  It suffices to show that $(b\cdot a)\cdot c=0$ for $b, c\in A$. Notice that by the pre-Lie ring relation 
\[(a\cdot  b)\cdot c- a\cdot (b\cdot c)=(b\cdot a)\cdot c- b\cdot (a\cdot c).\]
 Hence, $(b\cdot a)\cdot c=0$ since $a\cdot A=0$.
\end{proof}

We say that a pre-Lie ring $(A, +, \cdot )$  is  nilpotent if $A^{[i]}=0$ for some $n$, where we define by induction
$A^{[1]}=A$ and $A^{[i+1]}=\sum_{j=1}^{i}A^{[j]}\cdot A^{[i+1-j]}$.
 Notice that $A^{[i]}$ is an ideal in $A$.

It was shown in \cite{Engel} that braces which are left nilpotent are strongly  nilpotent 
  if and only if they are right nilpotent.  Here we will show that an analogous result holds for pre-Lie rings.

\begin{theorem}\label{4}  Let $A$ be either a pre-Lie ring or a brace. Let $m, n$ be natural numbers, then there is $s_{n,m}$ depending only of $m$ and $n$ such that if   
 $A^{(n)}=0$ and $A^{m}=0$ then $A^{[s_{m,n}]}=0$.  Moreover, $s_{n+1, m}\leq s_{n,m}(m+1)$.
\end{theorem}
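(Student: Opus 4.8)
The plan is to prove the statement by induction on $n$, with the explicit values $s_{1,m}=1$ and $s_{n+1,m}=s_{n,m}(m+1)$, running the brace and pre-Lie cases in parallel; throughout I write $\ast$ for the relevant binary operation and use that a product with a zero subfactor is zero in both settings. For the base case $n=1$ one has $A^{(1)}=A$, so $A^{(1)}=0$ forces $A=0$ and hence $A^{[1]}=0$; thus $s_{1,m}=1$ works.

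For the inductive step I would assume the claim for $n$, suppose $A^{(n+1)}=0$ and $A^{m}=0$, and set $I=A^{(n)}$. Then $I$ is an ideal (by Rump's result in the brace case, and by the preceding Proposition in the pre-Lie case), and since $A^{(n+1)}=A^{(n)}\ast A=I\ast A$ we obtain $I\ast A=0$. The radical series pass to quotients in the usual way, so in $A/I$ we have $(A/I)^{(n)}=(A^{(n)}+I)/I=0$ and $(A/I)^{m}=(A^{m}+I)/I=0$; the inductive hypothesis then gives $(A/I)^{[t]}=0$ for $t:=s_{n,m}$, that is $A^{[t]}\subseteq I$, and hence $A^{[t]}\ast A\subseteq I\ast A=0$.

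It then remains to show $A^{[t(m+1)]}=0$, which yields $s_{n+1,m}=s_{n,m}(m+1)$. Here I would invoke two standard structural facts, valid for braces and pre-Lie rings alike (in the brace case the failure of left distributivity being controlled by Lemma~\ref{fajny}): that $\{A^{[i]}\}_{i}$ is a descending chain of ideals, and that $A^{[k]}$ is the additive span of all products of at least $k$ elements of $A$ with arbitrary bracketing. So fix such a product $P$ of at least $t(m+1)$ elements and regard its bracketing as a binary tree $T$. If some internal node of $T$ has its left subtree carrying at least $t$ leaves, then the value of that subtree is a product of at least $t$ elements of $A$, hence lies in $A^{[t]}\subseteq I$, so the value at that node lies in $I\ast A=0$ and therefore $P=0$. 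Otherwise every left subtree of $T$ carries at most $t-1$ leaves, which forces $T$ to be a right comb,
\[
P=v_{1}\ast\bigl(v_{2}\ast\bigl(\cdots\ast(v_{r}\ast \ell)\cdots\bigr)\bigr),
\]
where each $v_{i}\in A$ is a product of at most $t-1$ leaves and $\ell\in A$ is a single leaf. Counting leaves gives $\sum_{i=1}^{r}|v_{i}|\ge t(m+1)-1$, so $r\ge (t(m+1)-1)/(t-1)\ge m+1$ (and when $t=1$ this second branch is vacuous, so $P=0$ by the first branch). Thus $P$ is a right-normed product of $r+1\ge m+1$ elements of $A$, so $P\in A^{r+1}$; since $A^{m}=0$ and $A^{j+1}=A\ast A^{j}$ we have $A^{j}=0$ for all $j\ge m$, whence $P=0$. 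In either case $P=0$, so $A^{[t(m+1)]}=0$ and the induction closes (iterating also gives the crude bound $s_{n,m}\le (m+1)^{\,n-1}$).

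The quotient bookkeeping in the inductive step and the two structural facts about $\{A^{[i]}\}$ are routine. The part that needs care is the combinatorial dichotomy on the bracketing tree: making precise that the condition \emph{every left subtree carries fewer than $t$ leaves} forces the right-comb shape, and then arranging the two length estimates — the one coming from $A^{[t]}\subseteq I$ (equivalently $I\ast A=0$, a single step down the right series) and the one coming from $A^{m}=0$ (the left series) — so that they combine to produce exactly the factor $m+1$. This is essentially the only content; the rest is formal.
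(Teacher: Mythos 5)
Your pre-Lie half is correct, and it is essentially the paper's own argument in different clothing: the paper expands an element of $A^{[\alpha]}$ as nested sums $\sum a_i\cdot b_i$ and runs the same dichotomy (a left factor of degree $\geq s_{n,m}$ lies in $A^{(n)}$, so the term dies in $A^{(n)}\cdot A=A^{(n+1)}=0$; otherwise all left factors are short and, after reassociating by two-sided distributivity, one reaches a right-normed product of more than $m$ elements, killed by $A^{m}=0$). Your tree formulation, the base case, and the counting $r\geq m+1$ are all fine for pre-Lie rings, since there $A^{[k]}$ really is the additive span of bracketed products of at least $k$ elements.

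The gap is in the brace case, precisely at the ``standard structural fact'' that $A^{[k]}$ is spanned by bracketed $*$-products of at least $k$ elements. Only the easy inclusion (every word of $\geq k$ letters lies in $A^{[k]}$) is automatic; the inclusion you actually use, namely that $A^{[t(m+1)]}=0$ follows once every individual word of length $\geq t(m+1)$ vanishes, is not. The reason is the failure of left distributivity: $A^{[j]}*A^{[i-j]}$ is spanned by products $x*y$ in which the left factor $x$ is a \emph{sum} of words, and $x*y$ does not split into $\sum(\text{word})*y$. Expanding such products with Lemma \ref{fajny} produces correction terms $(d_i*d_i')*c-d_i*(d_i'*c)$ whose left factors are again sums, so one needs a genuine termination argument for the recursive expansion; under mere left nilpotency this is not available, and under the hypotheses of Theorem \ref{4} establishing this spanning statement is essentially the whole content of the brace case. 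The paper does not reprove it: for braces it simply invokes Theorem 12 of \cite{Engel} and gives the inductive argument only for pre-Lie rings, where bilinearity makes the spanning fact trivial. To repair your proof, either cite \cite{Engel} for the brace case as the paper does, or supply a proof of the spanning claim (e.g.\ by an induction that uses the right series to terminate the Lemma \ref{fajny} expansions) — asserting it as routine is not enough, since that is exactly where the difficulty lives.
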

\begin{proof}  We use a similar proof as in Theorem 12  in  \cite{Engel}. If $A$ is a brace then this shows that such $s_{m,n}$ exists by Theorem 12 \cite{Engel}.

Suppose now that $A$ is a pre-Lie algebra. 
We will proceed by induction on $n$. If $n=0$ then $0=A^{2}=A\cdot A=A^{(2)}=A^{[2]}$, so the result holds.

Suppose that there is a natural number $s_{n,m}$ such that any left brace satysfying 
  $A^{(n)}=0$ and $A^{m}=0$ satisfies $A^{[s_{n,m}]}=0.$
 Assume now that our pre-Lie algebra satisfies $A^{(n+1)}=0$ and $A^{m}=0$.  Let $\alpha \geq s_{n,m}\cdot (m+1)$ and suppose that $a\in A^{[\alpha ]}$. 
 Then $a=\sum a_{i}\cdot b_{i}$ with $a_{i}\in A^{[q_{i}]}$, $b_{i}\in A^{[\alpha -q_{i}]}$. Observe that if $q_{i}\geq s_{n,m}$ then $a_{i}\in A^{(n)}$ 
(by the inductive assumption applied to pre-Lie algebra  $A/A^{(n)}$; this pre-Lie algebra is well defined since $A^{(n)}$ is an ideal in $A$).
 In this case we get $a_{i}\cdot b\in A^{(n)}\cdot A=A^{(n+1)}=0$. 
 Consequently we can assume that all $q_{i}< s_{n,m}$. 

 For each $i$ we can now write $b_{i}=\sum_{j}a_{i,j}\cdot b_{i,j}$, and by the same argument as before, we get that each $a_{i,j}\in A^{[r_{i,j}]}$ for some $r_{i,j}< s_{n,m}$ 
(as otherwise $a_{i,j}\in A^{(n)}$ by the inductive assumption applied to $A/A^{(n)}$, and so $a_{i,j}\cdot b_{i,j}\in A^{(n+1)}=0$).

Observe now that since $A$ is a left brace then 
\[ \sum_{i}a_{i}\cdot b_{i}=  \sum_{i}(a_{i}\cdot \sum_{j}a_{i,j}\cdot b_{i,j})=\sum_{i,j} a_{i}\cdot (a_{i,j}\cdot b_{i,j})  .\]

 Continuing in this way we get that $a\in \sum_{c_{1}, c_{2}, \ldots , c_{m}}(c_{1}\cdot (c_{2}\cdots c_{m-1}\cdot (c_{m}\cdot A))$ and since $A^{m}=0$ we get that each $a=0$, so $A^{[\alpha ]}=0$.
\end{proof} 
 Notice that if $A$ is a right nilpotent  pre-Lie ring (or a brace) of cardinality $p^{n}$ then $A^{(n+1)}=0$ since $A^{(i)}=A^{(i+1)}$ implies $A^{(i)}=A^{(i)}\cdot A=A^{(i+2)}=\cdots =0$. Similarly, if $A$ is left nilpotent then $A^{n+1}=0$ since $A^{i}=A^{i+1}$ implies $A^{i}=A^{i+1}+A^{i+2}+\ldots =0$. 
\begin{corollary}\label{12}
 Let $A$ be either a pre-Lie ring or a  brace of cardinality $p^{n}$ for some $n$. Suppose that $A$ is both left nilpotent and right nilpotent. Then 
 that $A^{[(n+1)^{n+1}]}=0$.
\end{corollary}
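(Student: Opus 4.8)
The plan is to derive the bound purely from the quantitative version of Theorem~\ref{4} together with the observation recorded just above the Corollary. First I would note that the hypotheses immediately yield two one-sided nilpotency bounds: since $A$ has cardinality $p^{n}$ and is left nilpotent, the chain $A\supseteq A^{2}\supseteq\cdots$ strictly decreases while nonzero, so $A^{n+1}=0$; dually, right nilpotency forces $A^{(n+1)}=0$. This is exactly the remark stated just before the Corollary, and it holds uniformly for pre-Lie rings and for braces.

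Next I would apply Theorem~\ref{4} with both of its parameters set equal to $n+1$: from $A^{(n+1)}=0$ and $A^{n+1}=0$ we obtain $A^{[s_{n+1,n+1}]}=0$, where the $s_{\cdot,\cdot}$ are the constants built in the proof of that theorem. It then remains to estimate $s_{n+1,n+1}$, and here I would iterate the recursion $s_{j+1,\,n+1}\le s_{j,\,n+1}(n+2)$ (the ``Moreover'' part of Theorem~\ref{4}, with $m=n+1$), anchoring it at $j=1$: for $j=1$ the hypothesis $A^{(1)}=0$ simply says $A=0$, hence $A^{[1]}=0$ and one may take $s_{1,\,n+1}=1$. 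Running the recursion $n$ times, from $j=1$ up to $j=n+1$, gives
\[
 s_{n+1,n+1}\ \le\ s_{n,n+1}(n+2)\ \le\ \cdots\ \le\ s_{1,n+1}(n+2)^{n}\ =\ (n+2)^{n}.
\]

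Finally I would check the elementary inequality $(n+2)^{n}\le(n+1)^{n+1}$: for $n\ge 2$ it follows from $\big(1+\tfrac{1}{n+1}\big)^{n}<e<3\le n+1$, while $n=0,1$ are verified directly; and since $A^{[1]}\supseteq A^{[2]}\supseteq\cdots$ is a decreasing chain (a routine induction on $i$, using that $A^{[i]}$ is an ideal), we conclude
\[
 A^{[(n+1)^{n+1}]}\ \subseteq\ A^{[(n+2)^{n}]}\ \subseteq\ A^{[s_{n+1,n+1}]}\ =\ 0 .
\]

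I expect the only real point of care to be the bookkeeping of the constant. One must anchor the recursion at $s_{1,m}=1$ rather than at the base case $s_{0,m}=2$ used inside the proof of Theorem~\ref{4} — anchoring at $j=0$ would produce $2(n+2)^{n+1}$, which is much larger than $(n+1)^{n+1}$ — and then one needs both the numerical estimate $(n+2)^{n}\le(n+1)^{n+1}$ and the monotonicity of the chain $A^{[i]}$. Everything else is a direct substitution into results already established in the excerpt, so no genuinely new idea is required.
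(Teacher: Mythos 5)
Your argument is correct and is essentially the route the paper intends (the corollary is left to follow directly from the remark preceding it together with Theorem \ref{4}): deduce $A^{n+1}=A^{(n+1)}=0$ from the cardinality, apply Theorem \ref{4}, and iterate the recursion $s_{j+1,m}\le s_{j,m}(m+1)$. Your extra care — anchoring at $s_{1,m}=1$ (valid since $A^{(1)}=A$), checking $(n+2)^{n}\le (n+1)^{n+1}$, and noting the monotonicity of the chain $A^{[i]}$ — is exactly the bookkeeping needed to reach the stated constant.
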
 
 We are now ready to present proof of Theorem \ref{e}. 

$ $

{\bf Proof of Theorem \ref{e}.} The first part of Theorem $2$ follows from Theorem \ref{99}.
Let $(A, +, \circ )$ be a pre-Lie ring of cardinality $p^{n}$ which is left nilpotent, then $A^{n+1}=0$, hence $A^{p-1}=0$. Notice that $W(\Omega (a))=a$ for every $a\in A$, hence
 $a=\Omega (a)+ {\frac 12}\Omega (a) \cdot \Omega (a)+\cdots .$
Denote  $e_{1}=\Omega (a)$, $e_{2}=\Omega (a)\cdot \Omega (a)$ and $e_{i+1}=\Omega (a)\cdot e_{i}$ for each $i$.
 Consequently, 
\[a=e_{1}+ {\frac 12}e_{2}+{\frac 1{3!}}e_{3}+\cdots .\]
 Notice that by the formula for the multiplication in the group of flows we get 
\[a*a=e^{L_{\Omega (a)}}(a)-a=e_{2}+\sum_{i=2}^{n-1}\alpha _{i}e_{i},\]
 for some integers $\alpha _{i}$ not depending on $a$ and not depending on the multiplication $\cdot $ in the pre-Lie algebra $A$.
 Denote $f_{1}=a$, $f_{2}=a*a$, and $f_{i+1}=a*f_{i}$.  By calculating expresions for $f_{i}$ similarly as above, we obtain that
 \[f_{i}=\sum_{j=i}^{p-1}m_{j}e_{j} \] for some integers $m_{j}$ (not depending on $a$ and not depending on multiplication $\cdot $). 
 Consequently,  there are some integers  $\beta _{i}$
  (not depending on $a$ and not depending on  multiplication $\cdot $)
 such that \[e_{1}=\sum_{i=1}^{p-1}\beta _{i}f_{i}.\] 
  Recall that $e_{1}=\Omega (a)$.  Notice now that for element $b\in A$ we have 
\[a*b=e'_{1}+ {\frac 12}e'_{2}+{\frac 1{3!}}e'_{3}+\cdots \] where 
$e'_{1}=\Omega (a)\cdot b$ and $e'_{i+1}=\Omega (a)\cdot e'_{i}$. 
Reasoning similarly as above, we can calculate elements $f'_{i}$ in the brace $(A, +, \circ )$, where $f'_{1}=a*b$, $f'_{2}=a*(a*b)$ and 
$f'_{i+1}=a*f'_{i}$, and obtain that 
 \[\Omega (a)\cdot b =\sum_{i=1}^{p-1}\gamma _{i}f'_{i},\]
 for some integers $\gamma _{i}$ not depending on $a$ and $b$ and not depending on multiplication $\cdot $ in pre-Lie algebra $(A, +, \cdot )$.

We will now  prove uniqueness.
 Let $(A, +, \cdot  _{1})$ and $(A, +, \cdot _{2})$ be two left nilpotent pre-Lie rings  and let $(A, +, \circ _{1})$ and $(A, +, \circ _{2})$ be braces which are  their  groups of flows. 
 Suppose that braces $(A, +, \circ _{1})$ and $(A, +, \circ _{2})$ are identical, and denote them as $(A, +, \circ )$.
 By the formula, \[\Omega (a) =\sum_{i=1}^{p-1}\beta _{i}f_{i}\] the element 
 $\Omega (a)$ calculated in the first pre-Lie algebra using the  multiplication $\cdot _{1}$ gives the same element in the set $A$, as the element $\Omega (a)$ calculated 
 using  the multiplication $\cdot _{2}$. Moreover $\Omega (a)\cdot_{1} b=\Omega (a)\cdot _{2} b$ by the formula
 \[\Omega (a)\cdot b =\sum_{i=1}^{p-1}\gamma _{i}f'_{i}.\] 
Indeed, this formula implies,  \[\Omega (a)\cdot_{1} b =\sum_{i=1}^{p-1}\gamma _{i}f'_{i}=\Omega (a)\cdot_{2} b.\]
 By Lemma \ref{8}, 
$W:A\rightarrow A$ is a bijective function, hence $\Omega : A\rightarrow A$ is a bijective function, so every element in $A$ can be presented as $\Omega (a)$ for some $a\in A$.
  Therefore, pre-Lie algebras $(A, +, \cdot _{1})$ and $(A, +, \cdot _{2})$ are equal.

 Suppose now that $(A, +, \cdot )$ is right nilpotent, then it is  nilpotent by Corollary \ref{12}.  By Lemma \ref{x} and by the formula for the multiplication in the group of flows it follows that  $(A, +, \circ )$ is right nilpotent. 
 On the other hand, let  
$(A, +, \cdot )$ be not right nilpotent. Suppose on the contrary, that the obtained brace $(A, +, \circ )$ is right nilpotent, then it is strongly nilpotent by Corollary \ref{12}.
 By using the formula  $\Omega (a) =\sum_{i=1}^{p-1}\beta _{i}f_{i}$
 we see that $\Omega (a)\in A^{[i]}$ provided that $a\in A^{[i]}$ where $A^{[i]}$ is defined as usually for brace $(A, +, \circ )$.
 Now using the formula
 \[\Omega (a)\cdot b =\sum_{i=1}^{p-1}\gamma _{i}f'_{i},\]
  we see $A\cdot A\subseteq A*A$, and if $a\in A^{[i]}$ and $b\in A^{[j]}$ then $a\cdot b\in A^{[i+j]}$. Recall that $A^{[m]}=0$ for some $m$. This shows that pre-Lie algebra $(A, +, \cdot )$ is nilpotent.  
 
\begin{corollary}\label{galois}
 Let $n$ be a natural number and let $p>(n+1)^{n+1}$ be a prime number. Then every brace of cardinality $p^{n}$ which is right nilpotent is obtained as in Theorem $2$
 from some nilpotent pre-Lie ring. Moreover, braces which are groups of flows of given  pre-Lie rings are isomorphic if and only if these pre-Lie rings are isomorphic.
\end{corollary}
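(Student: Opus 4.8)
The plan is to bootstrap Corollary \ref{galois} from the machinery already in place, specifically Theorem \ref{e}, Corollary \ref{12}, and Theorem \ref{13}. First I would fix a brace $(A, +, \circ)$ of cardinality $p^{n}$ which is right nilpotent, with $p > (n+1)^{n+1}$. Since every brace of $p$-power order is left nilpotent by Rump's result, $A$ is both left and right nilpotent, hence by Corollary \ref{12} we have $A^{[(n+1)^{n+1}]}=0$, so $A$ is strongly nilpotent of some nilpotency index $k \leq (n+1)^{n+1} < p$. The cardinality hypothesis $p > (n+1)^{n+1}$ is exactly what forces both $k < p$ and $n+1 < p$, so the hypotheses of Theorem \ref{13} (and of Notation 2) are met.

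Next I would invoke Notation 2 and Theorem \ref{13}: from the strongly nilpotent brace $A$ we form the associated pre-Lie ring $(A, +, \odot)$ via the formula $a \odot b = -(1+p+\cdots+p^{n-1})\sum_{i=0}^{p-2}\xi^{p-1-i}((\xi^{i}a)*b)$, and Theorem \ref{13} asserts precisely that $(A, \circ)$ is the group of flows of $(A, +, \odot)$. This pre-Lie ring is nilpotent (same nilpotency index $k$ as $A$, by Proposition \ref{2}), so it is in particular left nilpotent, and therefore Theorem \ref{e} applies to it. This establishes that every right nilpotent brace of cardinality $p^{n}$ arises as the group of flows (i.e. is obtained as in Theorem \ref{e}, which is ``Theorem 2'') of a nilpotent pre-Lie ring, which is the first assertion.

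For the second assertion I would argue that the correspondence ``pre-Lie ring $\mapsto$ its group of flows'' respects isomorphisms in both directions. One direction is immediate: an isomorphism of pre-Lie rings $(A, +, \cdot_{1}) \to (B, +, \cdot_{2})$ is an additive bijection intertwining the multiplications, hence it intertwines the operators $L_{a}$, the exponentials $e^{L_{a}}$, the maps $W$ and $\Omega$, and therefore the operations $a \circ b = a + e^{L_{\Omega(a)}}(b)$ — so it is a brace isomorphism of the groups of flows. For the converse, a brace isomorphism between the groups of flows intertwines the operations $+$ and $*$, hence intertwines the formula of Notation 2 defining $\odot$; combined with Theorem \ref{y} (which says $\odot$ recovers the original $\cdot$), this shows the brace isomorphism is in fact an isomorphism of the original pre-Lie rings.

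The main obstacle, and the place to be careful, is the bookkeeping around the hypothesis $p > (n+1)^{n+1}$: one must check that this single inequality simultaneously delivers $k < p$ (via Corollary \ref{12}, since $k \leq (n+1)^{n+1} < p$) and $n+1 < p$ (trivially, since $n+1 \leq (n+1)^{n+1}$, with strictness for $n \geq 1$; the case $n=0$ being vacuous), so that all of Notation 2, Theorem \ref{13}, Theorem \ref{y}, and Theorem \ref{e} are genuinely applicable. Everything else is a matter of stringing together the already-proven correspondences and observing functoriality; no new computation is needed beyond noting that each of the defining formulas ($W$, $\Omega$, $\circ$, $\odot$) is built canonically from $(+, \cdot)$ respectively $(+, *)$ and hence is preserved by any structure-preserving bijection.
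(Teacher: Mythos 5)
Your assembly is correct and follows essentially the route the paper intends: Corollary \ref{12} (with $p>(n+1)^{n+1}$) forces strong nilpotency of index $k<p$, Theorem \ref{13} together with Notation 2 and Proposition \ref{2} exhibits the right nilpotent brace as the group of flows of a nilpotent pre-Lie ring, and the isomorphism statement follows because both passages are given by universal integer-coefficient formulas in the respective operations, hence are preserved by any additive bijection intertwining the multiplications. The only point worth making explicit is that in the converse isomorphism direction you should note that any nilpotent pre-Lie ring of cardinality $p^{n}$ has nilpotency index at most $(n+1)^{n+1}<p$ (Corollary \ref{12} again, in its pre-Lie form), so that Theorem \ref{y} is indeed applicable.
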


\section{Hopf-Galois extensions and pre-Lie rings}
 In \cite{DB} Bachiller discovered that there is a correspondence between braces and Hopf-Galois extensions of abelian type. 
 This was subsequently investigated in a series of papers \cite{BB, kayvan, SVB, LC, Koch} where a correspondence with skew braces was investigated, new structures were investigated such as bi-braces, and some details were clarified. 
  Let $(A, +)$ be an abelian group of cardinality $p^{n}$ for some $n$ and some prime number $p>(n+1)^{n+1}$.    An exposition from the last section from \cite{dsk} about correspondence between braces and Hopf-Galois extension, combined with Corollary \ref{galois} yields the following  method of of constructing  Hopf-Galois structures  of type $ (A,+) $.
 
$ $

{\bf  To construct Hopf-Galois structures  of type $ (A,+) $} we can proceed as follows:

	   \begin{enumerate}
	   \item  First we find all not right nilpotent  braces  with additive group $(A, +)$. We list these braces as  $A_{1}', \ldots A_{m}'$, where $A_{i}'=(A, +, \circ  _{i}')$.
\item We then  find all Hopf-Galois extensions related to  brace $A_{i}'$ for $i=1, 2, \ldots , m$, using the method described in the last section in \cite{dsk}.
	   
	   	  \item It remains to  construct all Hopf-Galois extensions related to right nilpotent  braces with additive group $(A, +)$. This can be done as follows:  
	  \item  We construct all nilpotent pre-Lie rings with abelian group $(A, +)$.  Recall that a pre-Lie ring is nilpotent if for some $j$ product of any $j$ elements in this pre-Lie ring is zero. 

	  \item We list all pairwise not isomorphic pre-Lie rings of cardinality $p^{n}$ as $A_{1}, A_{2}, \ldots  A_{k}$, where $A_{i}=(A, +, \cdot _{i})$ where $(A, +)$ for $i=1, 2, \ldots , k$. 
	  
	  \item  For a  pre-Lie ring $A_{i}$ we  construct all Hopf-Galois extensions related to this pre-Lie ring using the method described below.

\item For a given pre-Lie ring $A_{i}$ we obtain a Hopf-Galois extensions of type $(A, +)$ with Galois group isomorphic to the group of flows of the pre-Lie ring $A_{i}$. 
\end{enumerate}	  
	  
 We can construct all {\bf  Hopf-Galois extensions related to a pre-Lie ring   $(A, +, \cdot )$} as follows:
	\begin{enumerate}
		\item Let $(A, +,  \cdot)$ be a pre-Lie ring.
		\item Let $\gamma _{i}$ be the automorphisms of the abelian group $(A, +)$.
		\item Check which $\gamma _{i}$ are also automorphisms of  the whole pre-Lie ring $(A,+, \cdot)$.

 {\em  Remark. Notice that $\gamma $ is an automorphism of the pre-Lie ring $(A, +, \cdot)$ if and only if $\gamma $ is an automorphism 
 of the  the group of flows of $(A, +, \cdot)$, by Theorem \ref{13}.} 
		\item We find coset representatives of $\mathrm{Aut}(A,+)/\mathrm{Aut}(A,+,\cdot) $, we call them $\xi _{1}, \xi_{2}, \ldots $. 

Recall that automorphisms $\alpha $ and $\beta $ of $(A, +)$  are the same in $\mathrm{Aut}(A,+)/\mathrm{Aut}(A,+,\cdot) $ if and only if  $\alpha =\beta \gamma $ for some $ \gamma \in \mathrm{Aut}(A,+,\cdot ) $ (so $\alpha (a)=\beta (\gamma (a))$ for all $a\in A$).
		\item For the automorphisms $\xi _{i}$ of the group $(A, +)$ from our list above we find the pre-Lie ring
$(A, +, \cdot _{\xi_{i}})$, with the same addition as in $(A, +, \cdot )$  and with the multiplication \[a\cdot _{\xi_{i}}b=\xi_{i}^{-1}(\xi_{i}(a)\cdot  \xi_{i}(b)).\]

 {\em Remark. Notice that if $(A, +, \circ )$ is the group of flows of  the pre-Lie ring $(A, +, \cdot  )$ and $(A, +, \circ _{\xi _{i}})$ is the group of flows of  $(A, +, \cdot _{\xi _{i}})$ then $a\circ _{\xi _{i}} b= \xi_{i}^{-1}(\xi_{i}(a)\circ \xi_{i}(b))$.}
		\item The corresponding Hopf-Galois extension is the regular subgroup $(a, \lambda _{a})$ (denoted above as $a\lambda _{a}$) of the holomorph $\mathrm{Hol}(A, +)$ where
		 \[\lambda _{a}(b)=a\circ _{\xi _{i}}b-a,\] where $a\circ _{\xi _{i}}b$ is calculated using the formula for the group of flows of the pre-Lie ring $(A, +, \cdot _{\xi _{i}})$. 
		 
 We collect all Hopf-Galois extensions obtained in this way for all $\xi _{i}$ to obtain all Hopf-Galois extensions corresponding to the pre-Lie ring $(A, +, \cdot )$.  The obtained Hopf-Galois extensions are pairwise distinct.
	
\end{enumerate}

%{\em Remark.}  In \cite{Dora} it was shown that all braces of cardinality $p^{4}$ are strongly nilpotent, except of some
% braces listed in \cite{dsk} (for $p>5$). By Theorem \ref{e}  if  $(A, +, \cdot )$ is a pre-Lie ring which is left nilpotent but not right nilpotent then the obtained brace is left but not right %nilpotent. Therefore, all pre-Lie rings of cardinality $p^{4}$ with $p> 5$ which are left nilpotent are right nilpotent, except of not ringht pre-Lie rings listed in \cite{dsk}.
% Similarly in \cite{Bachiller} it was shown that all braces of cardinality $p^{3}$ are right nilpotent.
 % Notice, that Theorem \ref{e} implies that all  left pre-Lie rings of cardinality $p^{3}$ for $p>4$ are right  nilpotent.
 % Notice that by Theorem \ref{4} right and left nilpotent pre-Lie rings are  strongly nilpotent.

$ $

{\bf Acknowledgments.} The author acknowledges support from the
EPSRC programme grant EP/R034826/1 and from the EPSRC research grant EP/V008129/1. The author is grateful to the University of Edinburgh for providing her with a sabbatical in the Fall semester 2021.

\end{document}